\title[Theoretical  guarantees for change localization using conformal $p$-values]{Theoretical  guarantees for change localization\\ using conformal $p$-values}
\newtheorem{assumption}{Assumption}
\newtheorem{fact}{Fact}[section]
\def\E{\mathbb{E}}
\def\P{\mathbb{P}}
\def\X{\mathbf{X}}
\def\N{\mathbb{N}}
\def\R{\mathbb{R}}
\newcommand{\convD}{\overset{d}{\to}}
\newcommand{\convP}{\xrightarrow[]{\mathcal{P}}}
\newcommand{\convAS}{\xrightarrow[]{a.s.}}
\renewcommand{\P}{\mathbb{P}}
\newcommand{\RNum}[1]{\uppercase\expandafter{\romannumeral #1\relax}}
\DeclareMathOperator*{\argmax}{arg\,max}
\newcommand{\iid}{\overset{\text{i.i.d}}{\sim}}
\newcommand{\qed}{\hfill\hbox{\vrule height1.5ex width.5em}}
\let\oldsup\sup
\renewcommand{\sup}{\oldsup\limits}
\let\oldmax\max
\renewcommand{\max}{\oldmax\limits}
\begin{document}
\maketitle
\begin{abstract}%
  {\footnotesize Changepoint localization aims to provide confidence sets for a changepoint (if one exists). Existing methods either rely on strong parametric assumptions or provide only asymptotic guarantees, or focus on a particular kind of change (e.g., mean shift) rather than the entire distributional change. The first method to achieve distribution-free changepoint localization with finite-sample validity was recently introduced by \cite{dandapanthula2025conformal}. However, while they proved finite sample coverage, there was no analysis of set size. In this work, we provide rigorous theoretical guarantees for their algorithm. We also show the consistency of a point estimator for change, and derive its convergence rate without distributional assumptions. Along that line, we also construct a distribution-free consistent test to assess whether a particular time point is a changepoint or not. Thus, our work provides unified distribution-free guarantees for changepoint detection, localization, and testing. In addition, we present various finite sample and asymptotic properties of the conformal $p$-value in the distribution change setup, which provides a theoretical foundation for many applications of the conformal $p$-value. As an application, we construct distribution-free consistent tests for exchangeability against distribution-change alternatives and a new, computationally tractable method of optimizing the powers of conformal tests. We run detailed simulations to corroborate the performance of our methods and theory. Together, our contributions offer a comprehensive and theoretically principled approach to distribution-free changepoint inference, broadening both the scope and credibility of conformal methods in modern changepoint analysis. }
\end{abstract}

\begin{keywords}%
  {\footnotesize Matrix of Conformal $p$-values (MCP), Changepoint localization, Distribution-free finite sample valid changepoint analysis, Exchangeability.} 
\end{keywords}

\section{Introduction}

In the offline changepoint detection problem, one is presented with an ordered dataset, together with the possibility that the underlying data-generating distribution changes at some unknown point in time. This unknown index is commonly referred to as the \textit{changepoint}. The goal of this problem is to identify whether a change has occurred, and if so, then we need to identify the changepoint. For example, we observe ordered data $\mathbf{X} = (X_1,\cdots,X_n)$ such that for some distributions $R,Q$ and unknown $\tau_n \in [n-1], \quad X_1,\cdots,X_{\tau_n} \iid R$ and $X_{\tau_n+1},\cdots,X_n \iid Q$ where $R \neq Q$. Then the problem aims to identify $\tau_n$ based on the observed data. Furthermore, the \textit{distribution-free} changepoint detection problem refers to the particular version of the aforementioned problem, when there is no knowledge of $R$ and $Q$. Similarly, the changepoint localization problem refers to constructing a confidence set for the original unknown changepoint $\tau_n$ maintaining a desired coverage guarantee for finite samples. Finally, the changepoint testing problem refers to assessing whether a particular time point is the true changepoint or not, from the sample. For brevity, we collectively refer to all of these three problems as \textit{changepoint analysis} problem,  formally introduced in \Cref{subsec: probolem formulation}.

The practical significance of changepoint localization is paramount in domains with unknown, evolving distributions where parametric models fail due to environmental variability. In such settings, a finite-sample confidence set for the changepoint $\tau$ is crucial for public safety and accountability, defining precise windows for advisories and regulatory enforcement. Similarly, assessing specific suspected time points requires powerful, distribution-free tests. Existing methods, however, predominantly rely on restrictive parametric assumptions, asymptotic guarantees, or particular change types (e.g., mean shift). While \cite{dandapanthula2025conformal} introduced the first assumption-free algorithm with finite-sample validity, they provided no theoretical analysis of the confidence set size. This leaves open the possibility of trivial set sizes (e.g., $\{1, \dots, n-1\}$), limiting the algorithm's established practical utility.

\begin{comment}
    In this paper, we develop formal theoretical guarantees of that localization algorithm, including a detailed analysis of its length, establishing that method as the first practically implementable distribution-free method for changepoint localization with finite sample validity. We also address the problem of estimating the changepoint $\tau$ and propose a novel algorithm for point estimation of \(\tau\). We provide rigorous theoretical guarantees for its performance, including its consistency and convergence rate, which require no parametric assumptions on the pre- and post-change distributions. In addition, we also construct distribution-free consistent tests to assess whether a particular time point is a changepoint for the distribution change or not. Our results show that both the proposed point estimator and the conformal-based localization method attain the stated properties under mild assumptions and that the procedures are computationally tractable. Thus, our work provides a unified distribution-free approach for the changepoint detection, localization, and testing problem with finite sample validity, having rigorous theoretical guarantees, and practical implementability.
\end{comment} 

This work establishes the first theoretical guarantees for the aforementioned localization algorithm, analyzing its set size to validate it as a practically implementable, distribution-free method. We concurrently propose a novel, assumption-free point estimator for $\tau$, deriving its consistency and convergence rates, and construct distribution-free tests for assessing candidate changepoints. Collectively, these results provide a unified, computationally tractable framework for changepoint detection, localization, and testing, backed by rigorous finite-sample and asymptotic theory.

\paragraph{Notation.}
For a random variable $Y$, $F_Y$ denotes its cumulative distribution function (CDF) and $F_Y^{-1}(y) := \inf \{x: F_Y(x) \geqslant y\}$ denotes its inverse. For a natural number $n$, $[n]$ denotes the set $\{1,2,\cdots,n\}$. We denote the collection of all Borel sets on $\R$ by $\mathcal{B}(\R)$ and define $\mathcal{B}[a,b] := \{A : A \subseteq [a,b], A \in \mathcal{B}(\R)\}$. For data $X_1,\cdots,X_n$ we denote their order statistics by $(X_{(1)},\cdots,X_{(n)})$ satisfying $X_{(1)} \leqslant \cdots \leqslant X_{(n)}$. For two CDFs $R,Q,$ 
$R \ll Q$ denotes that $R$ is absolutely continuous with respect to $Q$. For a random variable $X$ having distribution $Q$, we denote its expectation by $\E_Q[Z]$. For random vectors $\mathbf{Y} := (Y_1,\cdots,Y_m)$ and $\mathbf{Z} := (Z_1,\cdots,Z_n)$, we write $\mathbf{Y} \perp \mathbf{Z}$ if  $\mathbf{Y}$ and $\mathbf{Z}$ are independent. Furthermore, $X \overset{d}{=}Y$ means $X,Y$ have identical distribution.

\subsection{Distribution-free change detection, localization \& testing using conformal $p$-values}
\label{subsec: probolem formulation}
We formalize the problem for an ordered sequence $\mathbf{X}_n := (X_1,\cdots,X_n)$ taking values in $\mathcal{X}$. For an unknown $\tau_n \in [n-1]$, we assume the pre-change samples $(X_1,\cdots,X_{\tau_n}) \sim \P_0$ and post-change samples $(X_{\tau_n+1},\cdots,X_n) \sim \P_1$ satisfy the following:

\begin{assumption}[Independence and exchangeabilty of Pre and Post-change data]
\label{indep pre and post}
    $(X_1,\cdots,X_{\tau_n})$ and $(X_{\tau_n+1},\cdots,X_n)$ are independent of each other and are exchangeable (within themselves).
\end{assumption}
Recall that random variables $(X_1,\cdots,X_n)$ are exchangeable, if for any permutation $\pi$ of $[n]$, $(X_1,\cdots,X_n) \overset{d}{=} (X_{\pi(1)},\cdots,X_{\pi(n)})$. Let $\mathcal{H}_{0,t}$ be the hypothesis that $\tau_n = t$ and \Cref{indep pre and post} holds. A set $C_{1-\alpha} \equiv C_{1-\alpha}(\mathbf{X}_n)$ is a \emph{distribution-free confidence set} for $\tau_n$ at level $(1-\alpha)$ if, for all distributions satisfying \Cref{indep pre and post}, $\P_{\mathcal{H}_{0,\tau_n}}\big(\tau_n \in C_{1-\alpha}(\mathbf{X}_n)\big) \geq 1 - \alpha.$ Changepoint detection refers to obtaining a point-estimator $\widehat{\tau}_n \equiv \widehat{\tau}_n(\mathbf{X}_n)$. To discuss asymptotic properties, we consider a triangular array $\{X_{n,j}\}_{n \in \N, 1 \leq j \leq n}$ where the $n$th row $\mathbf{X}_n$ is defined over a common sample space $\Omega_n$. We assume for all $n$, there is $\tau_n \in [n-1]$ such that the pre-change segment $(X_{n,1},\cdots,X_{n,\tau_n})$ and post-change segment $(X_{n,\tau_n+1},\cdots,X_{n,n})$ are  separately exchangeable (within themselves) and mutually independent (of each other). The estimator $\widehat{\tau}_n \equiv \widehat{\tau}_n(\X_n)$ is \emph{consistent} for $\tau_n$ if $\frac{\widehat{\tau}_n}{\tau_n} \convP 1$ as $n \to \infty$. 
% For the testing problem, let $\{t_n\}_{n \in \N}$ and $\{t_n'\}_{n \in \N}$ be two sequences of naturals satisfying $t_n \neq t_n'$ for all $n$. We consider the hypotheses $\mathcal{H}_0^{(n)} : \tau_n = t_n$ and $\mathcal{H}_1^{(n)} : \tau_n = t_n'$. 
% A sequence of Borel measurable test functions $\phi_n : \Omega_n \to [0,1]$ is \emph{consistent} against the sequence of alternatives $\{\mathcal{H}_1^{(n)}\}_{n \in \N}$ if $\E_{\mathcal{H}_1^{(n)}}\big(\phi_n(\mathbf{X}_n)\big) \to 1$ as $n \to \infty.$

\begin{comment}
    Our methods rely on the \emph{conformal $p$-value} associated with a score function $s:\mathcal{X}\to \R$, defined as $p_n[s] = \frac{1}{n}\sum_{i=1}^n\mathbf{1}\big(s(X_i) \leq s(X_n)\big).$ We also consider the randomized version $p_n[s] = \frac{1}{n}\sum_{i=1}^n \big( \mathbf{1}(s(X_i) < s(X_n)) + \theta_n \mathbf{1}( s(X_i) = s(X_n) )\big)$, with $\theta_n \sim U(0,1)$ independent of $\mathbf{X}_n$.
\end{comment}

\subsection{Related Works}
\label{Related work}

\paragraph{Changepoint detection and localization.}
Classical changepoint analysis, surveyed in \cite{truong2020selective}, predominantly relies on parametric, likelihood-based formulations. While effective under correctly specified distributions, these methods typically yield point estimates without finite-sample confidence statements. Although \cite{saha2025post} recently constructed confidence sets following sequential detection, their approach requires parametric assumptions and restricted distribution classes. Similarly, rapid detection procedures like CUSUM \citep{page1955test} and conformal test martingales \citep{vovk2003testing} are designed for online signaling rather than offline localization confidence sets. \cite{dandapanthula2025conformal} introduced the first nonparametric algorithm with finite-sample coverage guarantees, yet it lacks further theoretical characterization.

\paragraph{Nonparametric confidence sets for changepoint.}
Existing methods for changepoint confidence sets generally rely on asymptotic validity or specific model structures, such as mean shifts. \cite{cho2022bootstrap} and \cite{fotopoulos2010exact} provide asymptotic solutions for univariate changes, while \cite{frick2014multiscale} and \cite{xu2024change} address regression settings under restrictive conditions. Although \cite{verzelen2023optimal} offer finite-sample guarantees for mean changes, their reliance on unknown constants prevents practical implementation. In contrast, \cite{dandapanthula2025conformal} propose the MCP algorithm for assumption-free, finite-sample valid confidence sets; this work provides the associated theoretical guarantees.

\begin{comment}
    \paragraph{Conformal $p$-value.}
Conformal $p$-value was first introduced in \cite{vovk2005algorithmic}, \cite{shafer2008tutorial}, and thereafter has been one of the central focuses of modern research. A detailed overview of the known properties of conformal $p$-values and their application can be found in \cite{angelopoulos2024theoretical}. Conformal $p$-values have recently been widely used in classical two-sample tests (\cite{hu2024two}), Conditional test (\cite{wu2024conditional}), multiple testing problems such as outlier detection (\cite{bates2023testing}, \cite{zhang2022automs}), data selection and sampling (\cite{jin2023selection}, \cite{wu2024optimal}). \cite{vovk2022testing} introduced a randomized version of the conformal $p$-value, which was used to construct a test for exchangeability via converting the $p$-values to a conformal test martingale. That work established several distributional properties of the conformal $p$-value under exchangeability. However, no analogous results were available in the non-exchangeable setting, and consequently, any result regarding the power of the test proposed by \cite{vovk2022testing} could not be obtained.
\end{comment} 

\paragraph{Conformal $p$-value.}
Introduced by \cite{vovk2005algorithmic, shafer2008tutorial} and reviewed in \cite{angelopoulos2024theoretical}, conformal $p$-values are now central to two-sample tests \citep{hu2024two} and conditional testing \citep{wu2024conditional}, outlier detection \citep{bates2023testing, zhang2022automs}, and data selection \citep{jin2023selection, wu2024optimal}. \cite{vovk2022testing} utilized randomized conformal $p$-values to test exchangeability via martingales; however, distributional properties were established only under the null hypothesis (exchangeability), preventing a theoretical characterization of the test's power against non-exchangeable alternatives.

\paragraph{Testing exchangeability.}
Beyond conformal martingales \cite{vovk2022testing}, the ``testing by betting'' framework \cite{shafer2021testing, ramdas2022testing} has enabled new exchangeability tests. Notably, \cite{saha2024testing} derived consistent tests for binary Markovian and Gaussian AR(1) processes. However, these target specific parametric dependencies. We propose a distribution-free test consistent against the broader alternative of general distributional changes.
 
\begin{comment}
    \paragraph{Changepoint \emph{Detection} using Conformal methods}
The application of conformal methods to changepoint analysis is a relatively recent development in contrast to more established approaches.
Initially, the works focused on testing the existence of a changepoint rather than obtaining estimates or localization. For instance, \cite{vovk2003testing} proposed an online test of exchangeability using conformal $p$-values. \cite{vovk2021testing} and \cite{vovk2021retrain} developed conformal test martingales specifically for changepoint problems and gave conformal versions of CUSUM and Shiryaev–Roberts procedures; \cite{shin2022detectors} later introduced the e-detector framework, which covers the conformal-martingale approach as a special case and provides a broad toolset for sequential change detection.

On the computational side, \cite{volkhonskiy2017inductive} proposed the inductive conformal martingale, a faster variant that offers conformity measures and betting functions for change detection, but it does not give formal guarantees for localization. Finally, \cite{nouretdinov2021conformal} analyzed conformal changepoint detection under continuity assumptions on the data-generating distribution and showed the conformal martingale is statistically efficient, without addressing confidence-interval construction for the changepoint location.
\end{comment}

\paragraph{Changepoint \emph{Detection} using Conformal methods.}
Conformal changepoint methods have historically prioritized detection over localization. \cite{vovk2003testing} pioneered online exchangeability testing, which \cite{vovk2021testing, vovk2021retrain} refined into conformal martingales and \cite{shin2022detectors} generalized via the e-detector framework. While \cite{volkhonskiy2017inductive} developed computationally efficient inductive variants and \cite{nouretdinov2021conformal} established detection efficiency, these works do not address confidence set construction for the changepoint location.

\begin{comment}
    So to summarize, recent advances in the changepoint literature have significant limitations: many methods are not designed to address the localization problem directly, others provide no finite-sample coverage guarantees and rely on restrictive distributional assumptions, and still others focus solely on validity without examining the power of the proposed tests. The recent work of \cite{dandapanthula2025conformal} is the first to introduce an algorithm for constructing distribution-free confidence sets with finite-sample validity in the changepoint localization setting. However, that contribution stops short of providing any theoretical analysis of the method, leaving open questions about its fundamental properties and performance guarantees. Our work closes this gap by offering a deep theoretical justification. Moreover, we establish new properties of conformal $p$-values in the distribution change setup, thereby strengthening the theoretical foundation of the framework and laying the groundwork for future research on distribution-free inference and testing problems on changepoint analysis.
\end{comment}

\subsection{Our Contributions}
% In this paper, our main contributions can be summarized as follows:
\begin{itemize}
    \item Existing changepoint localization methods rely on restrictive parametric assumptions, asymptotic validity, or specific change types (e.g., mean shifts). While \cite{dandapanthula2025conformal} introduced the first distribution-free algorithm with finite-sample validity, they provided no analysis of set size, leaving open the possibility of trivial, uninformative sets (e.g., $[n-1]$). We provide the first theoretical guarantees on the size of these confidence sets, establishing the algorithm as a practically implementable method for distribution-free localization with finite-sample validity.
    \item  We propose a novel distribution-free changepoint estimator based on conformal $p$-values, and show that our estimator is consistent without any parametric assumption on the pre- and post-change distributions. Furthermore, we also find the convergence rate of the estimated changepoint to the original changepoint under very mild assumptions.
    \item Conformal $p$-values are central to many recent advances in hypothesis testing \citep{vovk2003testing, wu2024conditional} and outlier detection \citep{bates2023testing}, yet their distributional properties were previously understood only under exchangeability \citep{vovk2003testing}. We derive novel finite-sample and asymptotic properties of conformal $p$-values under distribution shifts, characterizing their precise deviation from uniformity. These results provide a rigorous foundation for applying conformal methods in non-exchangeable, distribution-changing settings.
     \item We propose a distribution-free test to assess whether a specific time point is a changepoint, proving its consistency against local alternatives under mild assumptions specified in \Cref{Properties of MCP}. Additionally, we construct a distribution-free test of exchangeability that is consistent against distribution-change alternatives. Together, these  establish a unified, assumption-free framework for changepoint detection, localization, and testing with finite-sample validity.
    \item The performance of conformal methods depends critically on the score function. While the likelihood ratio is a theoretically optimal score function \citep{dandapanthula2025conformal}, it requires knowledge of unknown pre- and post-change distributions. We propose a computationally implementable \textit{nearly optimal} score function and prove that the power difference between the conformal $p$-values using our score function and the oracle converges to zero asymptotically. This yields a practical method that effectively maximizes the power of the conformal $p$-value in the distribution change setup.
\end{itemize}

\paragraph{Structure of the paper.}We start with developing properties of conformal $p$-values in the distributional change setting in \Cref{Properties of conformal pvalue}. Theoretical guarantees for both our changepoint detection method and the localization procedure of \cite{dandapanthula2025conformal} are presented in \Cref{Properties of MCP}. We construct distribution-free consistent tests for the changepoint and that of exchangeability in \Cref{sec: distribution-free tests for changepoint and exchangeability}. In \Cref{Nearly optimal score fucntion}, we construct \textit{nearly optimal} score functions and establish their asymptotic optimality. \Cref{Appendix disMCP} contains a detailed discussion and motivation of the Algorithm. A series of simulation studies designed to corroborate the performance of the algorithms and to illustrate the theoretical properties established is available in \Cref{simulation studies}. The form of optimal score functions for different kinds of distribution changes are explicitly derived in \Cref{Appendix A}. The form of our algorithm for growing samples in the form of a triangular array is stated in \Cref{sec: Appendix B}. Finally, proofs of all the results of this paper are presented in \Cref{Appendix: Proofs}.

\section{Properties of Conformal $p$-value under distribution change}
\label{Properties of conformal pvalue}
For random variables $X_1,\cdots,X_n$ and a score function $s: \R \to \R$, the conformal $p$-value is defined as $p_n^*[s] = \frac{1}{n}\sum_{i=1}^n \mathbf{1}\big(s(X_i) \leq s(X_n)\big).$ Without loss of generality, we consider the identity score function $s(x) = x$ and denote $p^*_n[s]$ simply as $p^*_n$, since one can equivalently analyze the transformed data $s(X_1), \dots, s(X_n)$. A randomized version of the above is defined as
\begin{equation}
    \label{def:conf pval}
    p_n(X_1,\cdots,X_n;\theta_n) :=  \frac{\sum_{i=1}^n \mathbf{1}\big(X_i < X_n\big) + \theta_n \sum_{i=1}^n \mathbf{1}\big(X_i = X_n\big)}{n},
\end{equation}
where $\theta_n \sim U(0,1)$, independent of $(X_1,\cdots,X_n)$. \cite{vovk2003testing,vovk2005algorithmic} proved the following.
% that under assumption of exchangeability of the data points $X_1,\cdots,X_n$, the distribution of $p_n$ is $U(0,1)$.
\begin{fact}
\label{Vovk Fundamental theorem}
    Let $X_1,\cdots,X_n$ be exchangeable. For $k \in [n]$, define  $p_k := p_k(X_1,\cdots,X_k;\theta_k)$
    where $\theta_1,\cdots,\theta_n \iid U(0,1);(\theta_1,\cdots,\theta_n) \perp (X_1,\cdots,X_n)$. Then $p_1,\cdots,p_n \iid U(0,1)$. 
\end{fact}
In this paper, we study the properties of $p_n$ under the distributional change i.e.\ when
for some $\tau_n \in [n-1], (X_1,\cdots,X_{\tau_n}) \sim \P_0$ and $(X_{\tau_n+1},\cdots,X_n) \sim \P_1$ where $(X_1,\cdots,X_{\tau_n}) \perp (X_{\tau_n+1},\cdots,X_n)$ and $\P_0,\P_1$ are exchangeable. To derive theoretical results, we assume in addition that the data points $X_1,\cdots,X_n$ are independent. Now, since under exchangeability, the $p$-value $p_n$ follows the $U(0,1)$ distribution, when the assumption of exchangeability fails, the distribution of $p_n$ is expected to deviate from the $U(0,1)$ distribution. In the following theorem, we quantify the deviation. 
\begin{theorem}
\label{th2}
Suppose $X_1,\cdots,X_\tau \iid R$ and $X_{\tau+1},\cdots,X_n \iid Q$  for some $\tau \in [n-1]$, where $Q \ll R$, $\frac{dQ}{dR}(x) < \infty$ for all $x \in \R$ and $(X_1,\cdots,X_\tau) \perp (X_{\tau+1},\cdots,X_n)$. Let $\lambda_n \sim U(0,1)$ be independent of $(X_1,\cdots,X_n)$. Define $p_n := p_n(X_1,\cdots,X_n;\lambda_n)$ follwing (\ref{def:conf pval}).
Let \( F_{p_n} \) be the conditional CDF of \( p_n \) given \( (X_{(1)}, \ldots, X_{(n)}) \). Then $$\sup_{t \in [0,1]} \left| F_{p_n}(t) - t \right| 
= \max_{1 \leqslant i \leqslant n} \left| \frac{ \sum_{j \leqslant i} \frac{dQ}{dR}(X_{(j)}) }{ \sum_{j \in [n]} \frac{dQ}{dR}(X_{(j)}) } - \frac{i}{n} \right|.$$   
\end{theorem}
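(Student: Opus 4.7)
The strategy is to reduce the supremum on the left-hand side to a finite maximum over the grid $\{i/n : 1 \leq i \leq n\}$ via a piecewise-linearity argument, and then evaluate the conditional CDF of $p_n$ at those grid points through a likelihood-ratio computation.

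First I would note that, because $Q \ll R$ and both admit densities, with probability one no two $X_i$'s coincide; in particular $\#\{i : X_i = X_n\} = 1$ and $p_n = (R_n - 1 + \lambda_n)/n$, where $R_n$ denotes the rank of $X_n$ in $(X_1,\ldots,X_n)$. Writing $\pi_k := \P(R_n = k \mid X_{(1)},\ldots,X_{(n)})$ and exploiting the independence of $\lambda_n$ from everything else, the conditional CDF is
\begin{equation*}
F_{p_n}(t) \;=\; \sum_{k=1}^{i-1}\pi_k + \pi_i\bigl(nt - i + 1\bigr), \qquad t \in \Bigl(\tfrac{i-1}{n}, \tfrac{i}{n}\Bigr],
\end{equation*}
so $F_{p_n}$ is piecewise linear in $t$ with breakpoints at $i/n$.

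Second, I would argue that $t \mapsto F_{p_n}(t) - t$ is piecewise linear with constant slope $n\pi_i - 1$ on each subinterval, and vanishes at $t = 0$ and $t = 1$. A piecewise-linear function attains its extrema at its nodes, so
\begin{equation*}
\sup_{t \in [0,1]} \bigl|F_{p_n}(t) - t\bigr| \;=\; \max_{0 \leq i \leq n}\bigl|F_{p_n}(i/n) - i/n\bigr| \;=\; \max_{1 \leq i \leq n}\Bigl|\sum_{k \leq i}\pi_k - \tfrac{i}{n}\Bigr|,
\end{equation*}
since the $i=0$ and $i=n$ boundary terms both vanish. This already produces the right-hand side up to identifying $\pi_k$.

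The crucial step is then to show that $\pi_k = L_k / \sum_j L_j$, with $L_j := (dQ/dR)(X_{(j)})$. Conditional on the order statistics $Y = (Y_1,\ldots,Y_n)$, the posterior law of the permutation $\sigma$ defined by $X_i = Y_{\sigma(i)}$ has weight proportional to $\prod_{i=1}^{\tau} r(Y_{\sigma(i)}) \prod_{i=\tau+1}^{n} q(Y_{\sigma(i)})$; after factoring out $\prod_j r(Y_j)$, this weight depends on $\sigma$ only through a product of $L_j$'s indexed by the positions assigned to post-change observations. Marginalising to $\{\sigma(n) = k\}$ and using symmetry among the remaining positions should deliver $\pi_k \propto L_k$ with normalising constant $\sum_j L_j$.

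The main obstacle is this last likelihood-ratio step: one must extract the clean factor $L_k$ from a sum over permutations that at first sight involves elementary symmetric functions of the $L_j$'s. The cleanest route I foresee is to view $(X_{\tau+1},\ldots,X_n)$ as an exchangeable block from $Q$ and marginalise the role of $X_n$ within that block against the global $Y$-grid via Bayes' rule with the Radon–Nikodym derivative $dQ/dR$; once this reduction is carried out the combinatorial sums collapse and Steps 1–2 close out the identity.
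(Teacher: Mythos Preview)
Your piecewise-linear reduction is correct and matches the paper's auxiliary lemma exactly. The gap is in your crucial step, the identification of $\pi_k$: the conclusion $\pi_k = L_k/\sum_j L_j$ is false whenever $\tau < n-1$, so the combinatorial sums you flagged as the ``main obstacle'' genuinely do \emph{not} collapse. Carrying your own computation through, the posterior over assignments has weight $\propto \prod_{i=\tau+1}^{n} L_{\sigma(i)}$, and marginalising over $\{\sigma(n)=k\}$ gives
\[
\pi_k \;=\; \frac{L_k \, e_{\,n-\tau-1}\!\bigl(L_{[n]\setminus\{k\}}\bigr)}{(n-\tau)\,e_{\,n-\tau}(L_{[n]})},
\]
where $e_m$ is the $m$-th elementary symmetric polynomial; this reduces to $L_k/\sum_j L_j$ only when $n-\tau=1$. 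A concrete check: with $n=3$, $\tau=1$ and $(L_1,L_2,L_3)=(1.5,\,0.9,\,0.6)$ one gets $\pi_1 = L_1(L_2+L_3)\big/\bigl(2\sum_{i<j}L_iL_j\bigr) = 25/62$, not $L_1/\sum_j L_j = 1/2$; feeding the true $\pi=(25/62,\,21/62,\,16/62)$ into your Step~2 yields $\sup_t|F_{p_n}(t)-t| = 7/93$, while the theorem's right-hand side equals $1/6$. Your instinct that the symmetric-function structure is the obstruction was exactly right --- it is not an artefact that an exchangeability or Bayes-rule manoeuvre within the post-change block can remove.

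For context, the paper bypasses the combinatorics via a change-of-measure argument asserting $X_n\mid \widehat{\mathcal F}_n \sim \sum_i w_i\delta_{X_i}$ with $w_i=L(X_i)/\sum_j L(X_j)$ directly. That argument reindexes terms of the form $L(X_{\tau+1})\cdots L(X_{n-1})\,w_n\,L(X_j)$ under $R^n$ by swapping $X_j\leftrightarrow X_n$; for $\tau<j<n$ the factor $L(X_j)$ already appears in the product $L(X_{\tau+1})\cdots L(X_{n-1})$, so the swap does not produce the claimed expression, and the same $n=3,\,\tau=1$ example falsifies the lemma. The stated identity therefore appears to hold only in the single-test-point case $\tau=n-1$.
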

Note that the conclusion of the above theorem does not depend on the position of the changepoint $\tau$, and the result does not particularly assume that $Q,R$ are different. Thus, when $Q = R$, i.e., the entire data is exchangeable, then $\sup_{t \in \R}|F_{p_n}(t) - F(t)| = 0$, implying that $p_n \sim U(0,1)$. So in this sense, \Cref{Vovk Fundamental theorem} is a special case of the above theorem. Also note that the above is a finite sample result, in the sense that it holds for all sample sizes $n \in \N$. 

Now we start discussing asymptotic properties of the conformal $p$-value under the distribution change setup. To accommodate growing sample sizes, we consider a triangular array $\{X_{n,j}\}_{n \in \N, 1 \leq j \leq n}$, where the $n$-th row represents the sample of size $n$ satisfying the following assumption.
\begin{assumption}[Triangular Array with Changepoint]
    \label{assn:trig array}
    For a fixed $c \in (0,1)$, $\tau_n = \lfloor cn \rfloor$, assume that $X_{n,j} \overset{\text{iid}}{\sim} R \text{ for } j \leq \tau_n, \text{ and }  X_{n,j} \overset{\text{iid}}{\sim} Q \text{ for } j > \tau_n,$ where $Q \ll R, \frac{dQ}{dR}(x) < \infty$ for all $x \in \R$ and $(X_{n,1},\cdots,X_{n,\tau_n}) \perp (X_{n,\tau_n+1},\cdots,X_{n,n})$ for all $n$.
\end{assumption}

So $\tau_n$ signifies the changepoint for the sample of size $n$ which does not lie in the extreme ends of the data stream. The next result (\Cref{Th:th3}) is an asymptotic version of \Cref{th2} in the above setup. \Cref{th2} quantifies the deviation of the post-change $p$-value $p_n$ from the standard uniform distribution for a finite sample of fixed size, while \Cref{Th:th3} studies the asymptotic deviation of the post-change $p$-value from the standard uniform distribution.  To get the next result, we need the following assumption on $c$ and the distributions $R,Q$. 
\begin{assumption}
\label{assn1}
    Assume there exists $\delta \in (0,1)$ such that: 
    
   (i) $\delta$th quantile of $cR + (1-c)Q$ is unique (denote it $q_{\delta, c}$), and $R$ is continuous at $q_{\delta, c}$. 

   % (ii) For $q_{\delta, c} := \delta$th quantile of $cR + (1-c)Q$, $R$ is continuous at $q_{\delta, c}$. 

   (ii) $\E_{Z \sim Q} \left( \frac{dQ}{dR}(Z) \right) < \infty$ and $\mu \neq 1$, where
    \begin{align*}
\mu &= \frac{\frac{c}{\delta} R(q_{\delta, c}) \cdot \E_{Z \sim R} \left( \frac{dQ}{dR}(Z) \right) 
+ \left(1 - \frac{c}{\delta} R(q_{\delta, c}) \right) \cdot \E_{Z \sim Q} \left( \frac{dQ}{dR}(Z) \right)}
{c \cdot \E_{Z \sim R} \left[ \frac{dQ}{dR}(Z) \right] + (1 - c) \cdot \E_{Z \sim Q} \left[ \frac{dQ}{dR}(Z) \right]}.
\end{align*}

\end{assumption}
There are many natural ways to satisfy \Cref{assn1}. For example, if one assumes $R,Q$ have continuous densities $r,q$ with respect to the Lebesgue measure; $r,q$ have identical support, and the common support $\{x: r(x) > 0\}$ is connected, then \Cref{assn1} holds. 
% Because in this case, (i) holds immediately. And since $Q \neq R$, by Cauchy-Schwartz inequality, one can show $\E_{Z \sim Q}\left( \frac{dQ}{dR}(Z) \right) > 1 = \E_{Z \sim R}\left( \frac{dQ}{dR}(Z) \right)$. Thus for some $c \in (0,1)$ if (ii) fails to hold, then $R(q_{\delta, c}) = \delta$ for all $\delta \in (0,1)$ which forces $c = 0$ or $1$, contradicting the assumption $c \in (0,1)$. Thus, in this case, (ii) also holds. 
As an example, if \( R \) is  of \( N(\mu_1, \sigma_1^2) \) and \( Q \) is  \( N(\mu_2, \sigma_2^2) \) with \( (\mu_1, \sigma_1^2) \neq (\mu_2, \sigma_2^2) \), then \Cref{assn1} is satisfied. With the above motivation, we now state the following result. 
\begin{theorem}
\label{Th:th3}
Consider a triangular array $\{X_{n,j}\}_{ n \in \mathbb{N}, 1 \leqslant j \leqslant n}$, and let $\lambda_n \overset{\text{iid}}{\sim} U(0,1)$ with $\{\lambda_n\}_{ n \in \N}$ independent of the array. Define $p_n := p_n(X_{n,1},\cdots,X_{n,n};\lambda_n)$ following (\ref{def:conf pval}). Let $F_{p_n}$ be the conditional CDF of $p_n$ given $\{X_{n,i}\}_{i=1}^n$. Then, under \Cref{assn:trig array},~\ref{assn1},
\begin{equation}
\label{eq:main_theorem}
\liminf_{n \to \infty} \sup_{t \in [0,1]} \left| F_{p_n}(t) - t \right| > 0 \quad \text{a.s.}
\end{equation}
\end{theorem}
From the above theorem, it can be seen that the asymptotic distribution of the post-change $p$-values differs from the standard uniform distribution. The proof of the result proceeds by first computing the KS distance $\sup_{t \in [0,1]} |F_{p_n}(t) - t|$ explicitly using \Cref{th2} and then establishing a lower bound on that by analyzing the corresponding likelihood ratio process at a specific rank $i=\delta n$. We decompose the sum $\sum_{j \leq \delta n} \frac{dQ}{dR}(X_{n,(j)})$ based on the latent source of the sample observations (pre or post-change) and then rigorously derive the asymptotic proportion of samples from $R$ residing in the bottom $\delta$-ranks of the entire sample. The primary technical novelty of the above and next results lies in our asymptotic analysis of partial sums of the likelihood ratio order statistic of the sample as standard methods for rank-based statistics can not be used in the setup for the distribution-change. 

So far, the above results focus on the properties of a single post-change $p$-value. However, it is quite imperative to understand the joint asymptotic behavior of the multiple post-change $p$-values. Aiming towards that, as a first step, we investigate how abruptly the distribution of the post-change $p$-values changes after the change occurs; So we obtain an upper bound of the rate in the next theorem. In view of that, we need the following assumptions on $c,R,Q$. 
\begin{assumption}
\label{assn2}
$c \in (0,1)$ is such that the following holds simultaneously:
\begin{itemize}
    \item[(i)] For all $\delta \in (0,1)$, the $\delta$-th quantile of the mixture distribution $cR + (1-c)Q$ is unique.
    
    \item[(ii)] $R$, $Q$ are Lebesgue absolutely continuous and have continuous densities $r$, $q$ such that
     $$\mathbb{E}_{Z \sim Q} \left[ \left( \frac{dQ}{dR}(Z) \right)^2 \right] < \infty 
    \quad \text{and} \quad 
    \mathbb{E}_{Z \sim R} \left[ \left( \frac{dQ}{dR}(Z) \right)^2 \right] < \infty.$$
\end{itemize}
\end{assumption}
There are many natural ways to satisfy \Cref{assn2}. For example, if $r,q$ have the same support and the common support $\{x: r(x) > 0\}$ is connected, then $(i)$ is satisfied because the CDF of the mixture distribution $cR + (1-c)Q$ is strictly increasing on the common support and $(ii)$ is a very standard second moment assumption assumed for technical purposes. As before, if $Q,R$ are Gaussian, 
% \( R \) is \( N(\mu_1, \sigma_1^2) \) and \( Q \) is \( N(\mu_2, \sigma_2^2) \) with \( (\mu_1, \sigma_1^2) \neq (\mu_2, \sigma_2^2) \), 
then \Cref{assn2} is satisfied.
 With this assumption, we now state the following theorem providing an upper bound on the rate of the distributional change of the post-change $p$-values. 
\begin{theorem}
\label{th: Th diff pval}
Let $\{X_{n,j}\}_{n \in \mathbb{N}, 1 \le j \le n}$, $p_n, F_{p_n}$ be as in \Cref{Th:th3}. Then, under \Cref{assn:trig array},~\ref{assn2},
\begin{equation}
\sup_{\epsilon \in \mathbb{R}} \left| F_{p_n}(\epsilon) - F_{p_{n+1}}(\epsilon) \right| = O_p\left( \frac{1}{\sqrt{n}} \right). \label{eq:thm}
\end{equation}
\end{theorem}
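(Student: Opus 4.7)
The plan is to establish that both $F_{p_n}$ and $F_{p_{n+1}}$ concentrate at rate $O_p(n^{-1/2})$ around a common deterministic limit $F^\star$ on $[0,1]$; the theorem then follows by the triangle inequality, since $(n+1)^{-1/2} \asymp n^{-1/2}$.

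First, applying Theorem \ref{th2} to the size-$n$ sample, $F_{p_n}$ is the piecewise linear CDF on $[0,1]$ with grid values
\[
F_{p_n}(i/n) \;=\; \frac{W_n(H_n^{-1}(i/n))}{M_n}, \qquad W_n(x) := \frac{1}{n}\sum_{j=1}^n \mathbf{1}(X_{n,j} \leq x)\, w(X_{n,j}),\quad M_n := W_n(+\infty),
\]
where $w := dQ/dR$ and $H_n$ is the empirical CDF of $(X_{n,1},\ldots,X_{n,n})$. Setting $H := cR + (1-c)Q$, $W(x) := cQ(x) + (1-c)\int_{-\infty}^x w \, dQ$, and $M := c + (1-c)\E_Q[w]$, I take $F^\star(t) := W(H^{-1}(t))/M$ as the candidate deterministic limit.

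Second, I establish root-$n$ uniform rates for each building block. The class $\mathcal{F} = \{x \mapsto \mathbf{1}(x \leq t)\, w(x) : t \in \R\}$ is VC-subgraph with envelope $w$, square-integrable under both $R$ and $Q$ by \Cref{assn2}(iii). Splitting the sample into its pre-change (iid from $R$) and post-change (iid from $Q$) halves, each of size $\Theta(n)$, and applying the Donsker theorem for VC classes to each half yields
\[
\sup_{x \in \R}|W_n(x) - W(x)| = O_p(n^{-1/2}), \qquad |M_n - M| = O_p(n^{-1/2}).
\]
Similarly, DKW (or Donsker) gives $\sup_x|H_n(x) - H(x)| = O_p(n^{-1/2})$; combined with continuity and strict positivity of $h = cr + (1-c)q$ and the uniqueness of quantiles from \Cref{assn2}(i)--(ii), this upgrades to $\sup_{t \in [\delta, 1-\delta]} |H_n^{-1}(t) - H^{-1}(t)| = O_p(n^{-1/2})$ for every fixed $\delta \in (0, 1/2)$, with a tail-truncation handling neighborhoods of $0$ and $1$.

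Third, I combine these rates via the decomposition
\begin{align*}
F_{p_n}(i/n) - F^\star(i/n) &= \frac{W_n(H_n^{-1}(i/n)) - W(H_n^{-1}(i/n))}{M_n} \\
&\quad + \frac{W(H_n^{-1}(i/n)) - W(H^{-1}(i/n))}{M_n} + W(H^{-1}(i/n))\!\left(\frac{1}{M_n} - \frac{1}{M}\right),
\end{align*}
whose three pieces are each $O_p(n^{-1/2})$ by the previous step and the local Lipschitz continuity of $W$ (its derivative $q[c+(1-c)w]$ is locally bounded on the bulk of $\mathrm{supp}(H)$). The interpolation error of the piecewise-linear $F_{p_n}$ between successive grid points $(i-1)/n$ and $i/n$ is at most $\max_i w(X_{n,(i)})/(nM_n)$, itself $O_p(n^{-1/2})$ because $\max_i w(X_{n,(i)}) \leq \bigl(\sum_j w(X_{n,j})^2\bigr)^{1/2} = O_p(\sqrt{n})$ under \Cref{assn2}(iii). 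Hence $\sup_\epsilon |F_{p_n}(\epsilon) - F^\star(\epsilon)| = O_p(n^{-1/2})$; the identical argument for sample size $n+1$, combined with the triangle inequality, concludes.

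The main obstacle is handling the tails: near $t \in \{0,1\}$ the mixture density $h$ may decay to $0$, which ruins both the naive quantile rate for $H_n^{-1} - H^{-1}$ and the Lipschitz bound on $W$. The remedy is a truncation to $[\delta_n, 1-\delta_n]$ with $\delta_n \to 0$ at a carefully chosen rate, combined with a tail bound using the second-moment assumption on $w$ from \Cref{assn2}(iii); keeping every such error term at order $n^{-1/2}$ is the principal technical burden of the proof.
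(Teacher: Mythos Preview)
Your route differs genuinely from the paper's. The paper never introduces a deterministic centering: it writes $F_{p_n}(i/n) - F_{p_{n+1}}(i/n)$ explicitly at the knots via the piecewise-linear formula from the proof of Lemma~\ref{lemma3}, passes to a subsequence along which the maximizing knot $i_n/n$ converges to some $\delta$, and then argues $O_p(1)$ along that subsequence by a Lyapunov-type CLT for the pair $\bigl(\sum_{j > n-i} w(X_{n,(j)}),\,\sum_j w(X_{n,j})\bigr)$. You instead bound $\|F_{p_n} - F^\star\|_\infty$ and $\|F_{p_{n+1}} - F^\star\|_\infty$ separately around the common limit $F^\star = G \circ H^{-1}$ (with $G = W/M$), which is more systematic and would, if it goes through, yield a strictly stronger conclusion.

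The gap is in your control of the second term. A direct calculation gives $(F^\star)'(t) = w(H^{-1}(t))/M$, so $F^\star$ is Lipschitz only when $w = q/r$ is bounded, and \Cref{assn2}(iii) does not force this (e.g.\ $r(x) \propto x^{-2}$, $q(x) \propto x^{-7/4}$ on $(1,\infty)$, smoothed near $x=1$, gives $w(x) \propto x^{1/4}$ unbounded while both second moments in \Cref{assn2}(iii) are finite). Under only $\int_0^1 [(F^\star)']^2 < \infty$, pairing the global DKW bound $\sup_t |H(H_n^{-1}(t)) - t| = O_p(n^{-1/2})$ with a H\"older estimate on $F^\star$ yields at best $|F^\star(s_n) - F^\star(t)| = O_p(n^{-1/4})$, not $O_p(n^{-1/2})$; and your truncation to $[\delta_n, 1-\delta_n]$ cannot rescue this, since the Lipschitz constant of $F^\star$ on that window and the residual tail mass $1 - F^\star(1-\delta_n)$ cannot simultaneously be made $O(n^{-1/2})$ (in the example above $1 - F^\star(1-\epsilon) \asymp \epsilon^{2/3}$). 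What is actually needed is the \emph{local} oscillation of the quantile process, of order $\sqrt{t(1-t)/n}$ rather than the global $n^{-1/2}$, paired pointwise with $(F^\star)'(t)$; your plan as written does not invoke this refinement. The paper's direct-comparison route avoids $F^\star$ entirely and so does not encounter this modulus-of-continuity obstruction in the same form.
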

The proof of the above result proceeds by exploiting the piecewise linear structure of $F_{p_n}$ obtained from \Cref{th2}. This geometry allows us to reduce the supremum over $\epsilon \in [0,1]$ to a discrete comparison at grid points of the form $i/n$. At these points, the difference decomposes into two terms: a linear interpolation error and the incremental fluctuation of normalized partial sums, which are further analyzed to show the stochastic boundedness of $\sqrt{n} \sup_{\epsilon \in [0,1]} |F_{p_n}(\epsilon) - F_{p_{n+1}}(\epsilon)|$.

We finally establish our principal result on the conformal $p$-value, which serves as the foundation for the subsequent analysis of the changepoint algorithm. It studies the empirical property of the conformal $p-$values $p_{n,1},\cdots,p_{n,n}$ computed on a sample $\{X_{n,j}\}_{1 \leq j \leq n}$ of size $n$. It is easy to see using \Cref{Vovk Fundamental theorem}, for a sample $\{X_{n,j}\}_{1 \leq j \leq n}$ of size $n$, at which the distribution change occurs at timepoint $\tau_n$, the pre-change $p$-values satisfy $p_{n,1},\cdots,p_{n,\tau_n} \iid U(0,1)$. However, the post-change $p$-values $p_{n,\tau_n+1},\cdots,p_{n,n}$ are possibly not even independent, and their joint distribution is also not easily tractable. As a result, conventional tools fall short in describing the behavior of the empirical distribution for all the $p$-values together. This limitation largely motivates the novelty and significance of the result we present below.
\begin{theorem}
\label{th: Main theorem}
    Consider a triangular array $\{X_{n,j}\}_{ n \in \mathbb{N}, 1 \leqslant j \leqslant n}$ under  \Cref{assn:trig array},~\ref{assn1},~\ref{assn2}. Let $\lambda_{n,j} \overset{iid}{\sim} U(0,1)$ with $\{\lambda_{n,j}\}_{ n \in \mathbb{N}, 1 \leq j \leq n}$ independent of the array. Define $p_{n,j} := p_j(X_{n,1},\cdots,X_{n,j};\lambda_{n,j})$ following (\ref{def:conf pval}). Let $\widehat{F}_{n,j}$ be the empirical CDF of $p_{n,1}, \ldots, p_{n,j}$.
    % i.e., $\widehat{F}_{n,j}(t) = \frac{1}{j} \sum_{k=1}^{j} \mathbf{1}(p_{n,k} \leq t), t \in \R.$
Then, for all $\Tilde{c} > 0$,
\begin{equation}
\liminf_{n \to \infty} \sup_{t \in [0,1]} \left| \widehat{F}_{n, \lfloor cn + \Tilde{c}n^{1/4} \rfloor}(t) - t \right| > 0 \quad \text{a.s.}.
\label{eq:main}
\end{equation}
\end{theorem} 
The proof of the above result proceeds by lower-bounding the KS distance $\sup_{t \in [0,1]} \left| \widehat{F}_{n, \lfloor cn + \Tilde{c}n^{1/4} \rfloor}(t) - t \right|$ by the sum of the deviations of both the terms from the average conditional CDF of the $p$-values, $\frac{1}{\lfloor cn + n^{1/4} \rfloor} \sum_{k=1}^{\lfloor cn + n^{1/4} \rfloor} \P_{\mathcal{G}_{n,k}}(p_{n,k} \leq t)$, where $\P_{\mathcal{G}_{n,j}}(\cdot)$ denote the conditional probability conditioned on $\mathcal{G}_{n,j} = \sigma\left( \{ X_{n,k} \}_{1 \leq k \leq j} \right)$, and then showing on of the deviations are asymptotically negligible while the other has an almost surely positive limit inferior. 

It is to be noted that for a sample $\{X_{n,j}\}_{1 \leq j \leq n}$ of size $n$, the pre-change $p$-values are independently and identically distributed as the standard uniform distribution, while in contrast, the post-change $p$-values significantly deviate from that. Consequently, if one intends to apply the Kolmogorov–Smirnov (KS) test to assess the exchangeability of the sample, the preceding result specifies the exact number of $p$-values that must be taken into account in order to obtain a consistent test. Thus, the above result has notable significance in the context of testing exchangeability details of which are discussed in \Cref{sec: distribution-free tests for changepoint and exchangeability}. In addition, the above insight serves as the foundation for the theoretical guarantees of the changepoint algorithm, which will be developed and discussed in detail in the following section.

\begin{comment}
    \section{MCP algorithm for changepoint detection and localization - Background}
\label{CONCH Algorithm}

\end{comment}

\section{The Matrix of Conformal $p$-values (MCP) Algorithm for changepoint detection and localization: Description and Theoretical Properties}
\label{Properties of MCP}
% In this section, we describe our changepoint detection algorithm. 
As formulated in \Cref{subsec: probolem formulation}, our goal is to estimate the changepoint $\tau_n$ from data. We begin by defining the score function, a foundational component of the conformal framework. Throughout, let $[\mathcal{X}^m]$ denote the set of unordered \emph{multisets} (bags) of $m$ observations, denoted by $[Y_1, \ldots, Y_m]$.

\begin{definition}[Score Functions]
    A \textbf{family of score functions} is a collection $\big( (s_{r,t})_{r=1}^{n-1}\big)_{t=1}^n$ of functions $s_{r,t} : \mathcal{X} \times [\mathcal{X}^r] \times \mathcal{X}^{n-t} \to \mathbb{R}$.
\end{definition}
Intuitively, the score function projects data into one dimension to enhance separability between pre- and post-change distributions. The second argument processes a multiset of observations exchangeably, ensuring the score depends only on the set of pre-change data, not its order. Conversely, the third argument processes data non-exchangeably, allowing flexibility to learn from the sequence structure. The MCP changepoint detection algorithm is formally described in Algorithm~\ref{alg: CONCH}.

\begin{algorithm2e}[!t]
\small
\caption{MCP Algorithm for Changepoint Detection}
\label{alg: CONCH}
\PrintSemicolon % Forces semicolons to appear at the end of lines

\KwIn{$(X_{t})_{t=1}^n$ (dataset), $\big((s_{r,t}^{(0)})_{r =1}^{n-1}\big)_{t=1}^n$ and $\big((s_{r,t}^{(1)})_{r =1}^{n-1}\big)_{t=1}^n$ (left and right score function families)}
\KwOut{$\widehat{\tau}_n$ (estimate of changepoint $\tau_n$)}

\For{$t \in [n-1]$}{
    \For{$r \in (1, \ldots, t)$}{
        \For{$j \in (1, \ldots, r)$}{
            $\kappa_{r,j}^{(t)} \leftarrow s_{r,t}^{(0)}(X_j ; [X_1, \ldots, X_r], (X_{t+1}, \ldots, X_n))$\;
        }
        \textbf{end}\;
        % REMOVED \textbf{end}\; -- vlined handles this automatically
        Draw $\theta_r^{(t)} \sim \text{Unif}(0,1)$\;
        $p_r^{(t)} \leftarrow \frac{1}{r} \sum_{j=1}^{r} \left( \mathbf{1}_{\kappa_{r,j}^{(t)} > \kappa_{r,r}^{(t)}} + \theta_r^{(t)} \cdot \mathbf{1}_{\kappa_{r,j}^{(t)} = \kappa_{r,r}^{(t)}} \right)$\;
    }
    \textbf{end}\;

    \For{$r \in (n, \ldots, t+1)$}{
        \For{$j \in (r+1, \ldots, n)$}{
            $\kappa_{r,j}^{(t)} \leftarrow s_{n-r,n-t}^{(1)}(X_j ; [X_{r+1}, \ldots, X_n], (X_1, \ldots, X_t))$\;
        }
        \textbf{end}\;
        Draw $\theta_r^{(t)} \sim \text{Unif}(0,1)$\;
        $p_r^{(t)} \leftarrow \frac{1}{n - r + 1} \sum_{j=r}^{n} \left( \mathbf{1}_{\kappa_{r,j}^{(t)} > \kappa_{r,r}^{(t)}} + \theta_r^{(t)} \cdot \mathbf{1}_{\kappa_{r,j}^{(t)} = \kappa_{r,r}^{(t)}} \right)$\;
    }
    \textbf{end}\;

    $\widehat{F}^{(0)}_{t}(z) := \frac{1}{t} \sum_{r=1}^{t} \mathbf{1}_{p_{r}^{(t)} \leq z}$; \quad
    $\widehat{F}^{(1)}_{t}(z) := \frac{1}{n - t} \sum_{r=t+1}^{n} \mathbf{1}_{p_{r}^{(t)} \leq z}$\;
    
    $W_{t}^{(0)} \leftarrow \sqrt{t} \cdot \text{KS}(\widehat{F}^{(0)}_{t}, u)$; \quad
    $W_{t}^{(1)} \leftarrow \sqrt{n - t} \cdot \text{KS}(\widehat{F}^{(1)}_{t}, u)$\;

    $p_{t}^\text{left} = 1 - F_t(W_{t}^{(0)})$; \quad $p_{t}^\text{right} = 1 - F_{n-t}(W_{t}^{(1)})$\;

    % Using \tcc for comments usually looks better, but plain text works too
    ($F_n$ is the CDF of $\sqrt{n}\cdot \text{KS}(\widehat{F}_n,u)$, $\widehat{F}_n$ is the empirical CDF of $n$ iid observations from $U(0,1)$)\;

    $p_t^{\texttt{CONF}} = \min \{2p_t^{\text{left}}, 2p_t^{\text{right}}, 1\}$\;
}
\textbf{end}\;

$\widehat{\tau}_n \leftarrow \arg\max_{t \in [n-1]} p_t^{\texttt{CONF}}$\;
\Return $\widehat{\tau}_n$\;
\end{algorithm2e}
As discussed in \cite{dandapanthula2025conformal}, when $X_1,\cdots,X_\tau \iid R$ and $X_{\tau+1},\cdots,X_n \iid Q,$ one can find the finite-sample valid $(1-\alpha)$ level confidence set for $\tau_n$ as,
\begin{equation}
\label{eq:CONCH CI}
    C^\texttt{MCP}_{n,1-\alpha} = \{t \in [n-1] : p^{\texttt{CONF}}_{t} > \alpha \},
\end{equation}
where $p^{\texttt{CONF}}_{t}$ is as in Algorithm~\ref{alg: CONCH} (Line 20). \Cref{fig:nprmal pval} contains an illustration of the $p$-values produced by the MCP algorithm, showing that in the presence of a distribution
change, the $p$-values exhibit a sharp spike-type pattern around the true changepoint. The motivation and related discussions regarding the algorithm can be found in \Cref{Appendix disMCP}. A visualization of the characteristics of the $p$-values is in \Cref{simulation studies}.
In particular, when there is a triangular array of random variables $\{X_{n,j}\}_{n \in \N, 1 \leq j \leq n}$ satisfying $X_{n,1},\cdots,X_{n,\tau_n} \iid R$, $ X_{n,\tau_n+1},\cdots,X_{n,n} \iid Q$, $ (X_{n,1},\cdots,X_{n,\tau_n}) \perp (X_{n,\tau_n+1},\cdots,X_{n,n})$, the form of the MCP algorithm (Algorithm~\ref{alg: CONCH}) for applying on data points in one row $\{X_{n,j}\}_{1 \leq j \leq n}$, is discussed in \Cref{sec: Appendix B} as Algorithm~\ref{alg: CONCH traiangular} for notational convenience. That form of the algorithm will be referred to during the discussions related to the asymptotic properties of the algorithm.  

Following \cite{dandapanthula2025conformal}, although $C^\texttt{MCP}_{n,1-\alpha}$ has finite sample validity, analyzing its size is of crucial importance for practical applications, as one can anyway consider the set $[n-1]$ for a complete coverage confidence set. So in that view, we now discuss the theoretical properties for the point estimator and confidence set produced by the MCP algorithm. In the next result, we show that for any miscoverage level $\alpha \in (0,1)$, if one applies the MCP algorithm to obtain the $(1-\alpha)$ confidence set $C_{n,1 - \alpha}^\texttt{MCP}$ from the sample of size $n$, and $l_{n,1-\alpha}$ denotes its size, i.e. $l_{n,1-\alpha} = \sum_{j = 1}^n \mathbf{1}(j \in C_{n,1 - \alpha}^\texttt{MCP})$ then $\frac{l_{n,1-\alpha}}{n} \convAS 0$ as $n \to \infty$ under weak assumptions.
\begin{theorem}
\label{th: rel len CONCH}
   Let $\{X_{n,j}\}_{ n \in \mathbb{N}, 1 \leqslant j \leqslant n}$ be such that \Cref{assn:trig array},\ref{assn1},\ref{assn2} hold. For $\alpha \in (0,1)$, let $\widehat{\tau}_n$ and $C^\texttt{MCP}_{n, 1- \alpha}$ be the point estimate and $(1 - \alpha)$ confidence set respectively for $\tau_n$ using Algorithm~\ref{alg: CONCH traiangular} applied on $\X_n := \{X_{n,j}\}_{1 \leq j \leq n}$. Let $l_{n,1-\alpha}$ be the size of the confidence set $C^\texttt{MCP}_{n, 1- \alpha}$. Then, the relative size of the confidence set shrinks to zero:  $\frac{l_{n,1-\alpha}}{n} \convAS 0,  \text{ as } n \to \infty.$
   In addition, $\mathbf{1}\bigg(\widehat{\tau}_n \in \big[\tau_n - n^{1/4}, \tau_n + n^{1/4}\big]\bigg) \convAS 1,$ yielding ratio-consistency:  $\frac{\widehat{\tau}_n}{\tau_n} \convAS 1,  \text{ as }  n \to \infty.$
\end{theorem}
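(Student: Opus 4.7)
My plan is to use \Cref{th: Main theorem} twice—once on the left chunk and once (by time-reversal) on the right chunk—to show that every candidate $t$ outside a window of radius $n^{1/4}$ around $\tau_n$ has $p_t^{\texttt{CONF}}$ decaying exponentially in $n$, while $p_{\tau_n}^{\texttt{CONF}}$ is super-uniform and therefore stays above any exponentially small threshold on a Borel--Cantelli-summable event. Both claims of the theorem then follow.

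Concretely, fix $\alpha \in (0,1)$ and consider $t > \tau_n + n^{1/4}$, written as $t = \tau_n + \tilde{c}(t,n)\,n^{1/4}$ with $\tilde{c}(t,n) \geq 1$. With the identity score function, the conformal $p$-values $p_r^{(t)}$ produced in the left loop of \Cref{alg: CONCH traiangular} coincide (up to the $p \leftrightarrow 1-p$ symmetry, which preserves KS distance from the uniform) with the $p_{n,r}$ of \Cref{th: Main theorem}. That theorem therefore applies to $\widehat{F}_t^{(0)}$ and yields $\liminf_n \mathrm{KS}(\widehat{F}_t^{(0)}, u) \geq \delta > 0$ almost surely, so $W_t^{(0)} = \sqrt{t}\cdot\mathrm{KS}(\widehat{F}_t^{(0)}, u) \geq \delta\sqrt{cn}$ for all large $n$. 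The Kolmogorov tail $1 - K(x) \leq 2 e^{-2x^2}$ combined with the uniform convergence $F_k \to K$ then yields $p_t^{\mathrm{left}} = 1 - F_t(W_t^{(0)}) \leq 2 e^{-\eta n}$ for some $\eta > 0$, and hence $p_t^{\texttt{CONF}} \leq 4 e^{-\eta n}$. For candidates $t < \tau_n - n^{1/4}$, applying the identical argument to the reversed sequence $(X_{n,n},\ldots,X_{n,1})$ (which has changepoint $n - \tau_n$ with the roles of $R$ and $Q$ interchanged and still satisfies \Cref{assn1} and \Cref{assn2}) controls $p_t^{\mathrm{right}}$ in the same exponential fashion, so $p_t^{\texttt{CONF}} \leq 4 e^{-\eta n}$ throughout the complement of the window.

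A union bound over the at most $n$ candidates outside $[\tau_n - n^{1/4}, \tau_n + n^{1/4}]$ gives $\P\bigl(\max_{t\ \text{outside}} p_t^{\texttt{CONF}} > \alpha\bigr) \leq 4n\, e^{-\eta n}$, which is summable in $n$, so Borel--Cantelli yields that this maximum falls below $\alpha$ eventually a.s. Hence $C^{\texttt{MCP}}_{n,1-\alpha} \subseteq [\tau_n - n^{1/4}, \tau_n + n^{1/4}]$ eventually a.s., giving $l_{n,1-\alpha} \leq 2 n^{1/4} + 1$ and therefore $l_{n,1-\alpha}/n \convAS 0$. For the point estimator, super-uniformity of $p_{\tau_n}^{\texttt{CONF}}$ yields $\P(p_{\tau_n}^{\texttt{CONF}} \leq e^{-\eta n/2}) \leq e^{-\eta n/2}$, also summable, so another Borel--Cantelli gives $p_{\tau_n}^{\texttt{CONF}} > e^{-\eta n/2}$ eventually a.s. Since $e^{-\eta n/2}$ dominates the outside upper bound $4 e^{-\eta n}$ for large $n$, the $\argmax$ cannot lie outside the window, so $\widehat{\tau}_n \in [\tau_n - n^{1/4}, \tau_n + n^{1/4}]$ eventually a.s.; the ratio $\widehat{\tau}_n/\tau_n \convAS 1$ then follows immediately because $\tau_n \asymp n$.

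The principal obstacle is uniformity in the first step: \Cref{th: Main theorem} is stated only for a fixed $\tilde{c} > 0$, whereas the union-bound argument requires a common lower bound $\delta > 0$ on $\liminf_n \mathrm{KS}(\widehat{F}_t^{(0)}, u)$ that is valid simultaneously for every $\tilde{c}(t,n) \geq 1$ and every sufficiently large $n$. I would close this either via a coupling/monotonicity argument showing that adding more post-change observations to the left chunk can only enlarge the bias of $\widehat{F}_t^{(0)}$ relative to the identity (so $\delta(\tilde{c})$ is nondecreasing in $\tilde{c}$ and $\delta = \delta(1)$ suffices), or by discretising $\tilde{c}$ on a geometric grid and handling the fluctuations between grid points through a Donsker-type bound on the conformal empirical process. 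An analogous uniformisation is required for the reversed-sequence step.
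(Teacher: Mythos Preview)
Your high-level strategy coincides with the paper's: invoke \Cref{th: Main theorem} on the left chunk (and its time-reversal on the right) to force the KS statistic to diverge for candidates away from $\tau_n$, hence $p_t^{\texttt{CONF}}\to 0$, hence exclusion from $C^{\texttt{MCP}}_{n,1-\alpha}$. The paper's proof is considerably terser than yours: it fixes a single $\tilde c>0$, shows that the two specific points $\lfloor cn\pm\tilde c\,n^{1/4}\rfloor$ are eventually excluded a.s., notes that this holds for every $\tilde c>0$, and then simply asserts both conclusions. In particular, for part~2 the paper writes only ``observe that $\widehat\tau_n\in C^{\texttt{MCP}}_{n,1-\alpha}$'' and concludes; your super-uniformity plus Borel--Cantelli route is a genuine strengthening of that step, since $\widehat\tau_n$ need not lie in $C^{\texttt{MCP}}_{n,1-\alpha}$ when the latter is empty, which happens with positive probability for each $n$.

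You correctly flag the uniformity-in-$\tilde c$ issue, and it is worth noting that the paper's proof does not resolve it either: the passage from ``for each fixed $\tilde c$ the point $cn+\tilde c\,n^{1/4}$ is eventually excluded'' to ``$l_{n,1-\alpha}/n\to 0$'' (let alone to the $n^{1/4}$ window in part~2) is made without any explicit uniformisation argument. Your DKW-tail plus union-bound plan is a natural way to try to close it, and of your two proposed fixes the monotonicity/coupling idea (more post-change observations in the chunk cannot shrink the KS bias) is closer in spirit to what the paper seems to have in mind. One concrete way to make part~1 rigorous without full uniformity in $\tilde c$ is to work first at the linear scale: for each fixed $c'\neq c$, re-run the proof of \Cref{th: Main theorem} with $cn$ pre-change and $|c'-c|\,n$ post-change $p$-values (a fixed positive fraction), obtain $\liminf_n\mathrm{KS}(\widehat F^{(0)}_{n,\lfloor c'n\rfloor},u)>0$ a.s.\ directly, and take a countable intersection over rational $c'$ to conclude $l_{n,1-\alpha}/n\to 0$. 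The $n^{1/4}$ window in part~2, however, genuinely requires the sharper uniformity you describe.
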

The proof of the above is done using a careful analysis of the power of the Kolmogorov–Smirnov test (at line 18 of \Cref{alg: CONCH}) using \Cref{th: Main theorem}. 
The result provides us with several interesting insights. Although the size of the confidence set is expected to grow with the sample size, this result in particular guarantees that the \emph{relative} size shrinks: if one shrinks the whole time interval $[1,n]$ to $[0,1]$, in case the shrunk confidence set $C_{n,1 - \alpha}^\texttt{MCP}$ almost surely converges to the changepoint itself. Also, using the dominated convergence theorem, it is immediate that $\frac{1}{n}\sum_{j=1}^n \P(j \in C_{n,1-\alpha}^\texttt{MCP}) \to 0$. The second result shows that the point estimator of $\widehat{\tau}_n$ of $\tau_n$ is consistent at a rate $n^{1/4}$ i.e.\ $\widehat{\tau}_n - \tau_n = o_P(n^{\frac{1}{4} + \epsilon})$ for all $\epsilon > 0$. An empirical justification for \Cref{th: rel len CONCH} is provided in \Cref{simulation studies}. From \Cref{fig:rel_length_CONCH}, it is evident that the relative length of the confidence set
produced by the MCP Algorithm tends to 0 as the sample size $n$ grows. In addition, \Cref{fig:consistency} validates the consistency of $\widehat{\tau}_n$ by displaying the ratio $\widehat{\tau}_n/\tau_n$ for growing sample sizes, in various distribution-shift settings.

\section{Distribution-free tests for changepoint and exchangeability}
\label{sec: distribution-free tests for changepoint and exchangeability}

\paragraph{Test for changepoint.} Using the MCP framework, we construct tests to assess whether a specific time point corresponds to a distribution changepoint. As before, let $\tau_n \in [n-1]$ be the original changepoint for the sample $\{X_{n,j}\}_{1 \leq j \leq n}$ of size $n$ and let $\{t_n\}_{n \in \N}$ and $\{t_n'\}_{n \in \N}$
be two sequences of naturals satisfying $t_n = \lfloor cn \rfloor$ and $t_n' = \lfloor c'n \rfloor$ for all $n$ and fixed $c,c' \in (0,1)$ with $c \neq c'$. Observe that there can be at most finitely many values of $n$, with $\lfloor cn \rfloor = \lfloor c'n \rfloor$ since $\lim_{n \to \infty}|c- c'|n = +\infty$. Since we are interested in asymptotic properties, without loss of generality, we can assume $\lfloor cn \rfloor \neq \lfloor c'n \rfloor$ for all values of $n$, since otherwise we can take sufficiently large values of $n$ in our consideration and continue the further discussion.  Let $\mathcal{H}_0^{(n)}$ and $\mathcal{H}_1^{(n)}$ be the hypotheses 
\begin{equation}
\label{eq:hypo}
\begin{split}
    \mathcal{H}_0^{(n)} : \tau_n = t_n, \quad 
    \mathcal{H}_1^{(n)} : \tau_n = t_n'.  
\end{split}
\end{equation}
Then to test $\mathcal{H}_0^{(n)}$ based on the sample $\mathbf{X}_n := \{X_{n,j}\}_{1 \leq j \leq n}$, for a fixed $\alpha \in (0,1)$, consider the test $\phi_{n,\alpha}$ obtained by inverting the confidence set $C_{n,1-\alpha}^\texttt{MCP}$ i.e. 
\begin{equation}
\label{eq: inverted test}
    \phi_{n,\alpha}(\mathbf{X}_n) = \mathbf{1}\big( t_n \notin C_{n,1 - \alpha}^\texttt{MCP} \big) = \mathbf{1}\big( p_{n,t_n}^\texttt{CONF} \leq \alpha \big).
\end{equation}
By \Cref{Vovk Fundamental theorem}, we have $\E_{\mathcal{H}_{0}^{(n)}}\big(\phi_{n,\alpha}(\mathbf{X}_n)\big) \leq \alpha$. So without any further distributional assumption on $R,Q$ we can conclude, $\phi_{n,\alpha}$ is a size $\alpha$ test for $\mathcal{H}_{0}^{(n)}$. In addition, the sequence of tests $\{\phi_{n,\alpha}\}_{n}$ is consistent against the sequence of alternatives $\mathcal{H}_1^{(n)}$, meaning that $\E_{\mathcal{H}_1^{(n)}}\big(\phi_{n,\alpha}(\mathbf{X}_n)\big) \to 1$ as $n \to \infty$, as guaranteed by the following result. 
% Recall that a sequence of Borel measurable test functions $\phi_n : \Omega_n \to [0,1]$ is \emph{consistent} against the sequence of alternatives $\{\mathcal{H}_1^{(n)}\}_{n \in \N}$ if $\E_{\mathcal{H}_1^{(n)}}\big(\phi_n(\mathbf{X}_n)\big) \to 1$ as $n \to \infty.$

\begin{proposition}[Consistency against a local alternative]
    Let $\{X_{n,j}\}_{ n \in \mathbb{N}, 1 \leqslant j \leqslant n}$ be such that \Cref{assn:trig array},\ref{assn1},\ref{assn2} hold and $\{\mathcal{H}_0^{(n)}\}_n$, $\{\mathcal{H}_1^{(n)}\}_n$ be as in (\ref{eq:hypo}). Then the sequence of tests $\{\phi_{n,\alpha}\}_n$ defined in (\ref{eq: inverted test}) satisfy,
   (i) $\phi_{n,\alpha}$ has level $\alpha$ for $\mathcal{H}_0^{(n)}$ for all $n \in \N$.  
   (ii) $\{\phi_{n,\alpha}\}_n$ is consistent against  $\{\mathcal{H}_1^{(n)}\}_n$. 
    
\end{proposition}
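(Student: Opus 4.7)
\emph{Part (i).} The level claim follows directly from the finite-sample coverage guarantee for $C^{\texttt{MCP}}_{n,1-\alpha}$. Under $\mathcal{H}_0^{(n)}$ the true changepoint is $\tau_n = t_n$, so $\P(t_n \in C^{\texttt{MCP}}_{n,1-\alpha}) \geq 1 - \alpha$, and hence $\E_{\mathcal{H}_0^{(n)}}(\phi_{n,\alpha}) = \P(t_n \notin C^{\texttt{MCP}}_{n,1-\alpha}) \leq \alpha$.

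\emph{Part (ii).} The plan is to show $p_{n,t_n}^{\texttt{CONF}} \to 0$ in probability under $\mathcal{H}_1^{(n)}$. Assume without loss of generality $c < c'$; the case $c > c'$ is symmetric and is handled through the left chunk. For large $n$ we then have $t_n < \tau_n = t_n'$, so the left chunk $(X_{n,1}, \dots, X_{n,t_n})$ is iid $R$ and exchangeable, making $p_{t_n}^{\text{left}}$ approximately $\mathcal{U}(0,1)$ (by Fact~\ref{Vovk Fundamental theorem}) and contributing no power. The right chunk $(X_{n,t_n+1}, \dots, X_{n,n})$, however, is non-exchangeable: its first $t_n' - t_n$ entries are iid $R$ and the remaining $n - t_n'$ are iid $Q$. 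The key reduction is a time reversal: setting $Z_k := X_{n,n-k+1}$ for $k = 1, \dots, m$ with $m := n - t_n$, each right-chunk conformal $p$-value $p_r^{(t_n)}$ in Algorithm~\ref{alg: CONCH} ranks $X_r$ in the suffix bag $[X_r, \dots, X_n]$, which after reversal is exactly the forward sequential conformal $p$-value of $Z_{n-r+1}$ within $[Z_1, \dots, Z_{n-r+1}]$. In the reversed sample the first $n - t_n'$ entries are iid $Q$ and the next $t_n' - t_n$ are iid $R$, i.e., a distribution-change sample of size $m$ with pre-change law $Q$, post-change law $R$, and changepoint fraction $c^\ast := (1 - c')/(1 - c) \in (0, 1)$.

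Invoking Theorem~\ref{th: Main theorem} on this reversed triangular array then yields $\liminf_{n \to \infty} \sup_{z \in [0,1]} |\widehat{F}_{t_n}^{(1)}(z) - z| > 0$ almost surely, which forces $W_{t_n}^{(1)} = \sqrt{m}\,\mathrm{KS}(\widehat{F}_{t_n}^{(1)}, u) \to \infty$ a.s. Since (by Donsker's theorem) the scaled KS statistic under exchangeability converges in distribution to $\sup_{z \in [0,1]} |\phi_z|$, which has full support on $[0, \infty)$, the CDFs $F_m$ are tight with $F_m(x) \to 1$ as $x \to \infty$ uniformly in large $m$. Hence $p_{t_n}^{\text{right}} = 1 - F_m(W_{t_n}^{(1)}) \to 0$ a.s., and $p_{n,t_n}^{\texttt{CONF}} \leq 2 p_{t_n}^{\text{right}} \to 0$, so the test rejects with probability tending to $1$.

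The main obstacle is the careful invocation of Theorem~\ref{th: Main theorem} after reversal. Assumptions~\ref{assn1}--\ref{assn2} must be re-verified with $R$ and $Q$ interchanged and $c$ replaced by $c^\ast$; natural symmetric conditions (mutual absolute continuity with two-sided square-integrable likelihood ratio) transfer them, and the Gaussian example following Assumption~\ref{assn2} already satisfies both directions. A secondary subtlety is that Theorem~\ref{th: Main theorem} as stated controls the empirical CDF only up to index $\lfloor c^\ast m + \tilde{c} m^{1/4}\rfloor$, while $\widehat{F}_{t_n}^{(1)}$ aggregates all $m$ $p$-values; appending the additional post-change $p$-values only drives the empirical CDF closer to the mixture limit $c^\ast F_U + (1 - c^\ast) F_{\mathrm{post}}$, with $F_{\mathrm{post}} \neq F_U$ by Theorem~\ref{Th:th3}, so the supremum deviation remains bounded away from zero and the argument extends.
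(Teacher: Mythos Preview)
The paper does not supply a standalone proof of this proposition; it is offered as a direct consequence of the finite-sample coverage guarantee (part (i)) and of the analysis in the proof of \Cref{th: rel len CONCH} (part (ii)). Your argument follows exactly that route: under $\mathcal{H}_1^{(n)}$, time-reverse the non-exchangeable chunk to obtain a forward distribution-change array (pre-change law $Q$, post-change law $R$, fraction $c^\ast=(1-c')/(1-c)$), invoke \Cref{th: Main theorem} to bound the KS distance away from zero, and conclude via Donsker that the one-sided $p$-value and hence $p_{n,t_n}^{\texttt{CONF}}$ tend to zero. In this sense your approach and the paper's implicit argument coincide.

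Both subtleties you flag are genuine, and the paper itself elides them. The first (re-verifying Assumptions~\ref{assn1}--\ref{assn2} with $R\leftrightarrow Q$ and with the new fraction $c^\ast$) is silently assumed every time the paper applies \Cref{th: Main theorem} to a reversed chunk in the proof of \Cref{th: rel len CONCH}. The second is sharper than your heuristic acknowledges: \Cref{th: Main theorem} as written handles only $O(m^{1/4})$ post-change $p$-values, and Step~2 of its proof bounds the post-change block crudely by $2m^{1/4}/m$, which is useless when that block has size $(1-c^\ast)m$. The extension you actually need is that, conditional on the data, the randomized $p$-values $p_{m,k}$ are independent (each depends on its own auxiliary $\theta_k$), so the post-change empirical fraction at any fixed $t$ concentrates around the average of the conditional CDFs $F_{m,k}(t)$; one then checks, via the limit computation in the proof of \Cref{Th:th3}, that this average at $t=1/2$ converges to a value different from $1/2$. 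Your mixture picture $c^\ast F_U + (1-c^\ast)F_{\mathrm{post}}$ is the correct intuition for this last step, but the paper's existing Step~2 does not by itself deliver it.
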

Thus, for a large value of $n$, if we have the sample $\mathbf{X}_n := \{X_{n,j}\}_{1 \leq j \leq n}$ having changepoint $\tau_n$, and we want to test $\mathcal{H}_0^{(n)} : \tau_n = cn$ vs $\mathcal{H}_1^{(n)} : \tau_n = c'n$, $\phi_{n,\alpha}$ is a size $\alpha$ test with power close to $1$.

\paragraph{Test for exchangeability.}

Using the conformal $p$-value, we now propose an offline test for exchangeability. As before, we have the sample $\mathbf{X}_n := \{X_{n,j}\}_{1 \leq j \leq n}$ of size $n$ and we want to test the exchangeability of the sample against the distribution-change alternative i.e.,
\begin{equation}
\label{ex hypo}
\begin{split}
    \Tilde{\mathcal{H}}_0^{(n)} &: X_{n,1},\cdots,X_{n,n}\hspace{0.2cm} \text{are exchangeable.} \\
    \Tilde{\mathcal{H}}_1^{(n)} &: X_{n,1},\cdots,X_{n,\lfloor cn \rfloor} \iid R \hspace{0.2cm} \text{and} \hspace{0.2cm} X_{n,\lfloor cn\rfloor + 1},\cdots, X_{n,n} \iid Q.
\end{split}
\end{equation}
for a fixed $c \in (0,1)$ which does not depend on $n$. We find the conformal $p$-values $p_{n,1}^\texttt{CONF},\cdots,p_{n,n}^\texttt{CONF}$ according to Algorithm~\ref{alg: CONCH traiangular}. From \Cref{Vovk Fundamental theorem}, we have under $\tilde{\mathcal{H}}_0^{(n)}$, $p_{n,1}^\texttt{CONF},\cdots,p_{n,n}^\texttt{CONF} \iid U(0,1)$. So to test $\tilde{\mathcal{H}}_0^{(n)}$, one can apply the Kolmogorov-Smirnov test on the $p$-values as follws; Denote $\widehat{F}_{n,j}(t) := \frac{1}{j}\sum_{i=1}^j\mathbf{1}\big( p_{n,j}^\texttt{CONF} \leq t \big)$, $W_{n,j} := \sqrt{j}\text{KS}(\widehat{F}_{n,j},u)$, where for a CDF $G$, $KS(G,u) := \sup_{t \in \R}|G(t) - F(t)|,$ where $F$ is the CDF of $U(0,1)$ random variable. Observe the distribution of $W_{n,j}$ is known under $\tilde{\mathcal{H}}_0^{(n)}$, and hence one can find $q_{n,j}^{1-\alpha} := (1-\alpha)$ th quantile of $W_{n,j}$ under $\tilde{\mathcal{H}}_0^{(n)}$. Using Donsker's theorem (\cite{donsker1951invariance}), as $n \to \infty$, $q_{n,\lfloor cn + n^{1/4}\rfloor}^{1 - \alpha} \to q_{1-\alpha}$, where $q_{1-\alpha}$ is the $(1-\alpha)$th quantile of $\sup_{z \in [0,1]}|\phi_z|, \phi_z = B_z - zB_1$ with $(B_z)_{z \geq 0}$ being standard brownian motion on $\R$. Thus to test $\tilde{\mathcal{H}}_0^{(n)}$, against $\tilde{\mathcal{H}}_1^{(n)}$, we consider the test
\begin{equation}
\label{consistent test excha}
    \tilde{\phi}_{n,\alpha}(\mathbf{X}_n) := \mathbf{1}\big( W_{n,\lfloor cn+n^{1/4}\rfloor} > q_{n,\lfloor cn+n^{1/4}\rfloor}^{1-\alpha}\big).
\end{equation}
 Observe that, from \Cref{th: Main theorem}, we have under $\tilde{\mathcal{H}}_1^{(n)}$,  $W_{n,\lfloor cn+n^{1/4}\rfloor} \convAS \infty$, thus $\{\tilde{\phi}_{n,\alpha}\}_n$ is consistent against the sequence of alternatives $\{\tilde{\mathcal{H}}_1^{(n)}\}_n$. The above discussion is be summarized below.
 \begin{proposition}(Distribution-free consistent test for exchangeability against distribution-change alternative)
 \label{prep: test for exchangeability}
     Let $\{X_{n,j}\}_{ n \in \mathbb{N}, 1 \leqslant j \leqslant n}$ be such that \Cref{assn:trig array},\ref{assn1},\ref{assn2} hold and $\Tilde{\mathcal{H}}_0^{(n)}$, $\tilde{\mathcal{H}}_1^{(n)}$ be as in (\ref{ex hypo}). Consider testing $\Tilde{\mathcal{H}}_0^{(n)}$ vs $\tilde{\mathcal{H}}_1^{(n)}$ based on the sample $\X_n := \{X_{n,j}\}_{1 \leq j \leq n}$ of size $n$. For a fixed $\alpha \in (0,1)$ and for each $n \in \N$, let $\tilde{\phi}_{n,\alpha}(\mathbf{X}_n)$ be the test defined in (\ref{consistent test excha}). Then (i) $\tilde{\phi}_{n,\alpha}$ is level $\alpha$ test for $\Tilde{\mathcal{H}}_0^{(n)}$ for all $n \in \N$.  
   (ii) $\{\Tilde{\phi}_{n,\alpha}\}_n$ is consistent against  $\{\Tilde{\mathcal{H}}_1^{(n)}\}_n$.
 \end{proposition}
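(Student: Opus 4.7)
The two claims separate cleanly, and the heavy lifting is already done by \Cref{th: Main theorem}; the rest is packaging.

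\emph{Part (i), size.} Under $\tilde{\mathcal{H}}_0^{(n)}$ the full sample $X_{n,1},\ldots,X_{n,n}$ is exchangeable, so applying \Cref{Vovk Fundamental theorem} sequentially shows that $p_{n,1},\ldots,p_{n,n} \iid U(0,1)$. Hence, for every $j$, $\widehat{F}_{n,j}$ is the empirical CDF of $j$ iid uniforms and $W_{n,j}=\sqrt{j}\,\mathrm{KS}(\widehat{F}_{n,j},u)$ has the standard finite-sample Kolmogorov--Smirnov null distribution, whose $(1-\alpha)$-quantile is precisely $q_{n,j}^{1-\alpha}$. By definition of the quantile, $\P_{\tilde{\mathcal{H}}_0^{(n)}}(W_{n,j}>q_{n,j}^{1-\alpha})\le\alpha$, so $\E_{\tilde{\mathcal{H}}_0^{(n)}}[\tilde{\phi}_{n,\alpha}]\le\alpha$, giving level $\alpha$.

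\emph{Part (ii), consistency.} Under $\tilde{\mathcal{H}}_1^{(n)}$, invoke \Cref{th: Main theorem} with $\tilde{c}=1$: there is a random $\gamma>0$ (almost surely positive) for which, for all sufficiently large $n$,
\[
\sup_{t\in[0,1]}\bigl|\widehat{F}_{n,\lfloor cn+n^{1/4}\rfloor}(t)-t\bigr|\ge\gamma\quad\text{a.s.}
\]
Multiplying by $\sqrt{\lfloor cn+n^{1/4}\rfloor}$ yields $W_{n,\lfloor cn+n^{1/4}\rfloor}\to\infty$ a.s. On the other hand, by Donsker's theorem (as already noted in the excerpt) the exact finite-sample KS null quantile converges to the Brownian-bridge quantile, $q_{n,\lfloor cn+n^{1/4}\rfloor}^{1-\alpha}\to q_{1-\alpha}$, which is finite. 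Combining the two, $\mathbf{1}\bigl(W_{n,\lfloor cn+n^{1/4}\rfloor}>q_{n,\lfloor cn+n^{1/4}\rfloor}^{1-\alpha}\bigr)\to 1$ almost surely, and bounded convergence (the test takes values in $[0,1]$) gives $\E_{\tilde{\mathcal{H}}_1^{(n)}}[\tilde{\phi}_{n,\alpha}]\to 1$.

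\emph{Where the work lives.} All genuine content is inherited from \Cref{th: Main theorem}: under the alternative the post-change $p$-values are neither mutually independent nor independent of the pre-change block, yet that theorem asserts that an $n^{1/4}$ buffer past the changepoint is enough to drive the empirical CDF a constant distance away from uniform. Given that, only two routine pieces remain, namely the finite-sample calibration via \Cref{Vovk Fundamental theorem} for size and Donsker plus bounded convergence for consistency. The only delicate bookkeeping is ensuring the deterministic sequence $\lfloor cn+n^{1/4}\rfloor$ matches the index used in \Cref{th: Main theorem}, which it does by choosing $\tilde{c}=1$.
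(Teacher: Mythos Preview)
The proposal is correct and follows essentially the same approach as the paper: size via \Cref{Vovk Fundamental theorem} giving iid uniform conformal $p$-values under the null, and consistency via \Cref{th: Main theorem} forcing $W_{n,\lfloor cn+n^{1/4}\rfloor}\to\infty$ a.s.\ under the alternative while Donsker's theorem keeps the critical value bounded. Your write-up is in fact a bit more careful than the paper's inline argument (you spell out the bounded convergence step to pass from a.s.\ convergence of the indicator to convergence of the power), but the structure is identical.
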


 \section{Finding nearly optimal score functions for conformal $p$-value}
\label{Nearly optimal score fucntion}

The performance of conformal methods depends critically on the choice of score function. In a beautiful analog to the classical Neyman-Pearson Lemma, in the distribution-change setup, the Conformal Neyman-Pearson Lemma (\Cref{Th:Th1}) \citep{dandapanthula2025conformal} guarantees that the optimal \textit{oracle score function} is in fact the likelihood ratio of the post- and pre-change distributions.
Particular versions of the result when there are particular forms of change in the distribution (e.g., covariate shift, label shift, regression function shift) is discussed in \Cref{Appendix A}. An analogous result of the above, when there are multiple changepoints, is presented in Theorem~\ref{th: CONF NP Lemma mujlti chnagepoint}. 

However, in practice, since the pre and post-change distributions are unknown, one can not compute the oracle score function. So we now discuss how one can obtain a score function which behaves almost like the oracle score function, and we show that the difference of the powers of the conformal $p$-value using our proposed score function and the oracle score function converges to 0. 

As before, let $X_1,\cdots,X_\tau \iid R$ and $X_{\tau+1},\cdots,X_n \iid Q$ where $Q,R$ are absolutely continuous with respect to the Lebesgue measure and have densities $q,r$ respectively. Since the likelihood ratio $s^*(x) = \frac{q(x)}{r(x)}$ corresponds to the most optimal score function, in case $q,r$ are unknown, the most natural choice for that would be the ratio of kernel-density estimates of the densities,
\begin{align}
\label{eq: nearly optimal score function}
    \widehat{\left(\frac{q(x)}{r(x)}\right)} &= 
\frac{\widehat{q}_{X_1, \ldots, X_{\widehat{\tau}_n}}(x)}{\widehat{r}_{X_{\widehat{\tau}_n+1}, \ldots, X_n}(x)},
    \\ \label{eq:KDE est of q}
\widehat{q}_{X_1,\ldots,X_{\widehat{\tau}_n}}(x) 
&:=\frac{1}{\widehat{\tau}_n\,h_{\widehat{\tau}_n}}
\sum_{i=1}^{\widehat{\tau}_n}
K\!\left(\frac{x - X_i}{h_{\widehat{\tau}_n}}\right), \\
\widehat{r}_{X_{\widehat{\tau}_n+1},\ldots,X_{n}}(x)
&:=\frac{1}{(n - \widehat{\tau}_n)\,h_{n - \widehat{\tau}_n}}
\sum_{i=\widehat{\tau}_n+1}^{n}
K\!\left(\frac{x - X_i}{h_{n - \widehat{\tau}_n}}\right),
\end{align}
where $K$ is the kernel function and $h_n$ denotes the bandwidth used for $n$ datapoints. Since we aim for this score function to converge to the optimal score function, we assume that the kernel $K$, bandwidth $h_n$, and densities $q,r$ are such that the kernel density estimates are uniformly strongly consistent; therefore, we assume them to satisfy \Cref{assn: kernel}.
\begin{assumption}[Uniform strong consistency of likelihood ratio]
\label{assn: kernel}
    (1) The kernel $K$ is continuous, bounded variation, with $\lim_{|x| \to \infty}K(x) = 0$.
    (2) Densities $q,r$ are uniformly continuous.
    (3) The bandwidth satisfies $\lim_{n \to \infty} h_n / n^\alpha = 1$ for some $\alpha \in (-1/2, 0).$
\end{assumption}
\begin{algorithm2e}
\small
\caption{Construction of Nearly Optimal Score Function}
\label{alg:optimal-score}
\KwIn{$X_1,\ldots,X_n$ (sample of size $n$)}
\KwOut{Nearly optimal score function $\widehat{s}^*$}
\Begin{
  Compute estimate $\widehat{\tau}_n$ of $\tau$, using Algorithm~\ref{alg: CONCH} using the identity score function\;
  Compute $\displaystyle \widehat{r}(x) :=$ the kernel density estimate of $r(x)$ based on the sample $\{X_1,\ldots,X_{\widehat{\tau}_n}\}$, where the kernel and bandwidth satisfy Assumption~\ref{assn: kernel}\;
  Compute $\displaystyle \hat{q}(x) :=$ the kernel density estimate of $q(x)$ based on the sample $\{X_{\widehat{\tau}_n+1},\ldots,X_n\}$, where the kernel and bandwidth satisfy Assumption~\ref{assn: kernel}\;
}
\Return $\displaystyle \widehat{s}^*(x) \;=\; \frac{\widehat{q}(x)}{\widehat{r}(x)}$\;
\end{algorithm2e}

We summarize the construction of this score function in Algorithm~\ref{alg:optimal-score}. With the above discussion, we now present the following result, which shows that the difference of powers of the conformal $p$-value using our score function and the most optimal score function (the likelihood ratio) converges to $0$ as the sample size tends to $\infty$, guaranteeing the \textit{near optimality} of our proposed score function. We state the result in terms of the triangular array of random variables as before.

\begin{theorem}[Guarantee for the asymptotic optimality of $\widehat{s}^*$]
\label{Th: optimising CONCH}
Let $\{X_{n,j} : n \in \mathbb{N}, 1 \leq j \leq n\}$ be a triangular array of random variables as in \Cref{th: rel len CONCH} satisfying Assumptions~\ref{assn1},~\ref{assn2} and~\ref{assn: kernel}. For a score function $s$, let $p_n[s]$ be the conformal $p$-value defined as 
\begin{equation}
        \label{conf pval traingular}
        p_n[s] = \frac{1}{n} \sum_{i=1}^n \bigg( \mathbf{1}\big(s(X_{n,i}) > s(X_{n,n}) \big) + \theta_n \mathbf{1}\big( s(X_{n,i}) = s(X_{n,n}) \big) \bigg),
    \end{equation}
where $\{\theta_n\}_n \iid U(0,1)$ and $\{\theta_n\}_n$ is independent of $\{X_{n,j}\}_{n \in \N, 1 \leq j \leq n}$. Let $\widehat{s}^*_n$ be the score function obtained using Algorithm~\ref{alg:optimal-score} applied on the sample $\{X_{n,j}\}_{1 \leq j \leq n}$ of size $n$, and $s^*$ be the oracle score function, $s^*(x) = \frac{q(x)}{r(x)}$. Then we have, $\E\big(p_n[s^*]\big) - \E\big(p_n[\widehat{s}^*_n]\big) \to 0 \text{ as } n \to \infty.$
\end{theorem}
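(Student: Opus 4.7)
The plan is to establish uniform convergence $\widehat{s}^*_n \to s^*$ on suitable compact sets almost surely, and then translate this into $L^1$ closeness of the conformal $p$-values $p_n[\widehat{s}^*_n]$ and $p_n[s^*]$; the conclusion will then follow from $|\E(p_n[s^*]) - \E(p_n[\widehat{s}^*_n])| \leq \E|p_n[s^*] - p_n[\widehat{s}^*_n]|$. The essential earlier input is Theorem \ref{th: rel len CONCH}, which gives $|\widehat{\tau}_n - \tau_n| \leq n^{1/4}$ almost surely for all sufficiently large $n$.

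\textbf{Step 1: uniform convergence of $\widehat{s}^*_n$.} First I would introduce the oracle KDEs $\widetilde{q}_n, \widetilde{r}_n$ built from the \emph{true} regime-split blocks $X_{n,1},\ldots,X_{n,\tau_n}$ and $X_{n,\tau_n+1},\ldots,X_{n,n}$. Under Assumption \ref{assn: kernel}, classical Nadaraya--Silverman uniform strong consistency gives $\widetilde{q}_n \to q$ and $\widetilde{r}_n \to r$ uniformly on $\R$ almost surely. Next I compare $\widehat{q}_n$ (the KDE in the numerator of $\widehat{s}^*_n$, based on $\widehat{\tau}_n$ observations) with $\widetilde{q}_n$: the discrepancy stems from at most $O(n^{1/4})$ mis-classified observations together with normalizer and bandwidth perturbations of order $O(n^{-3/4})$. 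Boundedness of the kernel and the bandwidth window $h_n \sim n^\alpha$, $\alpha > -1/2$, give
\[
\sup_{x \in \R}|\widehat{q}_n(x) - \widetilde{q}_n(x)| = O(n^{-3/4-\alpha}) \convAS 0,
\]
and analogously $\sup_{x \in \R}|\widehat{r}_n(x) - \widetilde{r}_n(x)| \convAS 0$. Consequently, on any compact $K \subset \R$ with $\inf_K r > 0$, $\epsilon_n := \sup_{x \in K}|\widehat{s}^*_n(x) - s^*(x)| \convAS 0$.

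\textbf{Step 2: from score closeness to $p$-value closeness.} The key tool is the sandwich
\[
\bigl|\mathbf{1}(\widehat{s}^*_n(X_{n,i}) > \widehat{s}^*_n(X_{n,n})) - \mathbf{1}(s^*(X_{n,i}) > s^*(X_{n,n}))\bigr| \leq \mathbf{1}\bigl(|s^*(X_{n,i}) - s^*(X_{n,n})| \leq 2\epsilon_n\bigr),
\]
valid on $\{X_{n,i}, X_{n,n} \in K\}$, supplemented by a separate treatment of the randomization/tie terms. Summing and taking expectation,
\[
\E|p_n[s^*] - p_n[\widehat{s}^*_n]| \leq 2\,\P(X_{n,1} \notin K) + \E\!\left[\frac{1}{n}\sum_{i=1}^n \mathbf{1}\bigl(|s^*(X_{n,i}) - s^*(X_{n,n})| \leq 2\epsilon_n\bigr)\right] + o(1).
\]
For each fixed $\delta > 0$, splitting on $\{\epsilon_n \leq \delta\}$ and using $\P(\epsilon_n > \delta) \to 0$ bounds the second term by $\P(|s^*(X) - s^*(Y)| \leq 2\delta) + o(1)$, where $X, Y$ are independent draws from the mixture $cR + (1-c)Q$. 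Sending $\delta \downarrow 0$ and then $K \uparrow \R$ kills both terms, provided $s^*(X) - s^*(Y)$ has no atom at $0$---a mild non-degeneracy automatic whenever $s^*$ is not constant on sets of positive $(R+Q)$-measure.

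\textbf{Main obstacle.} The most delicate point is decoupling the random, data-dependent band width $\epsilon_n$ from the indicator event $\{|s^*(X_{n,i}) - s^*(X_{n,n})| \leq 2\epsilon_n\}$, since $\widehat{s}^*_n$ is built from the very data entering the sum. The device of conditioning on $\{\epsilon_n \leq \delta\}$ works precisely because Step 1 produces a \emph{deterministic} limit for $\widehat{s}^*_n$, so on this high-probability event the band width is constant and the probability of the band event reduces to the unperturbed distribution of $s^*(X) - s^*(Y)$. A secondary technical hurdle is uniform strong consistency of the KDEs despite $O(n^{1/4})$ contamination; this relies explicitly on $\alpha > -1/2$ to leave enough headroom for the contamination terms $O(n^{1/4}/(n h_n)) = O(n^{-3/4-\alpha})$ to vanish uniformly.
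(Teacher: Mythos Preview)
Your proof is correct and follows essentially the same approach as the paper: Step~1 (uniform KDE consistency transferred from $\tau_n$ to $\widehat{\tau}_n$ via the $O(n^{1/4})$ bound from Theorem~\ref{th: rel len CONCH} and the condition $\alpha > -1/2$) matches the paper exactly, down to the $O(n^{-3/4-\alpha})$ rate for the contamination term. For Step~2 the paper is slightly more direct---it writes $\E[p_n[s]] = \P(s(X_1) > s(Y_1)) + \tfrac12\,\P(s(X_1) = s(Y_1))$ for independent $X_1 \sim R$, $Y_1 \sim Q$ and applies dominated convergence to the pointwise a.s.\ limit $\mathbf{1}\{\widehat{s}^*_n(x) > \widehat{s}^*_n(y)\} \to \mathbf{1}\{s^*(x) > s^*(y)\}$---whereas you bound $\E|p_n[s^*] - p_n[\widehat{s}^*_n]|$ via the sandwich inequality and a $\delta$-split; both routes require the same non-atom condition on $s^*(X)-s^*(Y)$ that you make explicit, and your packaging is somewhat more careful about the data-dependence of $\widehat{s}^*_n$ that you flag as the main obstacle.
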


Therefore, for samples of large size, one can use the above \textit{nearly optimal} score function, which asymptotically retains almost the same power as the oracle score function and is computationally  implementable without any knowledge of the distributions $Q,R$. 

\begin{comment}
    Let $\{X_{n,j} : n \in \mathbb{N}, j \leq n\}$ be a triangular array of random variables such that for a fixed $c \in (0,1)$, $\tau_n = \lfloor cn \rfloor $ for all $n$, 
\[
X_{n,j} \overset{iid}{\sim} R,\quad 1 \leq j \leq \tau_n, \quad \text{and} \quad X_{n,j} \overset{iid}{\sim} Q, \quad \tau_n+1 \leq j \leq n,
\]
where $Q$ is absolutely continuous with respect to $R$ and $\frac{dQ}{dR}(x) < \infty$ for all $x \in \R$ and $c \in (0,1), Q, R$ are such that \Cref{assn1},\ref{assn2} holds. Let $\{p^{\texttt{CONCH}}_{n,j}\}_{1 \leqslant j \leqslant n}$ be the set of combined $p$-values produced by the \texttt{MCP} Algorithm applied on $\{X_{n,j}\}_{1 \leqslant j \leqslant n}$ using the score function $\Tilde{\alpha}(s) = s, \quad s \in \R$. Let $\widehat{\tau}_n$ be the estimator of the changepoint $\tau_n$ based on $\{X_{n,j}\}_{1 \leqslant j \leqslant n}$ defined as 
\begin{equation}
    \widehat{\tau}_n = \argmax_{j \in [n]}p^\texttt{CONCH}_{n,j}.
\end{equation}
Then,  
\end{comment}

\section{Conclusion}
\label{sec:conclusion}
We introduced the MCP algorithm for changepoint detection based on conformal \(p\)-values and established a rigorous theoretical foundation for its use in the distribution-change setting. Concretely, we derive finite-sample and asymptotic properties of the conformal \(p\)-value, which provides a rigorous theoretical foundation for many applications of conformal $p$-values, including constructing conformal tests under minimal assumptions. Leveraging these properties, we provide theoretical guarantees for the changepoint localization procedure of \cite{dandapanthula2025conformal} and construct a distribution-free, consistent test to assess whether a particular timepoint is a changepoint or not, showing its consistency against a broad class of alternatives. Thus, our work establishes the MCP Algorithm as a unified framework for the changepoint detection, localization, and testing, providing all the classical desired theoretical guarantees such as finite sample validity, distribution-free property, and consistency.

We also construct a consistent test for exchangeability that retains power against broad alternatives, extending the applicability of conformal $p$-values beyond standard settings. Finally, we propose a computationally tractable, \textit{nearly optimal} score function that asymptotically matches the power of the oracle. Together, these results deliver both principled guarantees and implementable tools, expanding the scope and reliability of conformal methods in distribution-change settings.

\newpage
\appendix
\renewcommand{\thesection}{\Alph{section}} % Section A, B, ...

% Define a new theorem environment for the appendix
\newtheorem{apptheorem}{Theorem}[section] % Reset in each appendix section
\renewcommand{\theapptheorem}{\thesection.\arabic{apptheorem}}

\newtheorem{applemma}[apptheorem]{Lemma} % Shares counter with apptheorem
\renewcommand{\theapplemma}{\thesection.\arabic{applemma}}

\section{Discussion on the MCP Algorithm for changepoint detection and Localization}
\label{Appendix disMCP}

In this section, we discuss the motivation and further details of the MCP Algorithm. As stated in \Cref{subsec: probolem formulation}, we have the data $X_1,\cdots,X_n$, with a changepoint $\tau_n$. The MCP algorithm (Algorithm~\ref{alg: CONCH}) can be motivated as follows. For each $t \in [n-1]$, consider the null hypothesis,
\[ \mathcal{H}_{0,t} : (X_1,\cdots,X_t) \hspace{0.2cm} \text{exchangeable and} \hspace{0.2cm} (X_{t+1},\cdots,X_n) \hspace{0.2cm} \text{exchangeable}. \]
The algorithm aims to provide a $p$-value
 $p_t$ for this null hypothesis $\mathcal{H}_{0,t}$ as follows. It splits the entire dataset into two parts pivoted at $t$ : $(X_1,\cdots,X_t)$ and $(X_{t+1},\cdots,X_n)$. Now for both of the chunks, each datapoint $X_r$ is first mapped into a one-dimensional statistic $\kappa_{r}^{(t)}$ through a score function (line $4$ and line $11$ of Algorithm~\ref{alg: CONCH}). Since the score function is assumed to be symmetric in all its arguments, under $\mathcal{H}_{0,t}$, exchangeability is still preserved in the chunks $ \kappa^{t,(0)} =(\kappa_1^t,\cdots,\kappa_t^t)$ and $\kappa^{t,(1)} = (\kappa_{t+1}^t,\cdots,\kappa_n^t)$. Now, for each of the two chunks $\kappa^{t,(0)}$ and $\kappa^{t,(1)}$, the conformal $p$-values are sequentially computed (line $7$ and line $14$ in Algorithm $\ref{alg: CONCH}$). Since under $\mathcal{H}_{0,t}$, $(\kappa_1^t,\cdots,\kappa_t^t)$ and $(\kappa_{t+1}^t,\cdots,\kappa_n^t)$ are exchangeable, thus from \Cref{Vovk Fundamental theorem} (\cite{vovk2005algorithmic}) the $p$-values satisfy $p^t_1,\cdots,p^t_t \underset{\mathcal{H}_{0,t}}{\iid} U(0,1)$ and $p^t_{t+1},\cdots,p^t_{n} \underset{\mathcal{H}_{0,t}}{\iid} U(0,1)$.

\begin{comment}
    \begin{fact}[\cite{vovk2005algorithmic}]
\label{Vovk Fundamental theorem}
    Let $Z_1,\cdots,Z_n$ be exchangeable. For $k \in [n]$, define $p_k \equiv p_k(Z_1,\cdots,Z_k)$ as, $p_k := \frac{\sum_{i=1}^k \mathbf{1}(Z_i < Z_k) + \theta_k \sum_{i=1}^k \mathbf{1}(Z_i = Z_k)}{k},$
    where $\theta_1,\cdots,\theta_n \iid U(0,1)$ and $(\theta_1,\cdots,\theta_n) \perp (Z_1,\cdots,Z_n)$. Then $p_1,\cdots,p_n \iid U(0,1)$. 
\end{fact}
\end{comment}

Thus, to quantify the evidence against the null $\mathcal{H}_{0,t}$, scaled Kolmogorov-Smirnov distance of $(p^t_1,\cdots,p^t_t)$ and $(p^t_{t+1},\cdots,p^t_{n})$ from the $U(0,1)$ distribution is calculated respectively as $W_t^{(0)}$ and $W_t^{(1)}$ (in line 17 of Algorithm~\ref{alg: CONCH}). Since the exact distribution of $W_t^{(0)}$ and $W_t^{(1)}$ is known, therefore $W_t^{(0)}$ and $W_t^{(1)}$ are converted into $p$-values $p_t^\text{left}$ and $p_t^\text{right}$ using the probability integral transform (in line 18 of Algorithm~\ref{alg: CONCH}) which is finally combined into $p_t = p_t^\texttt{CONF}$ (in line 20 of Algorithm~\ref{alg: CONCH}). Thus, from the above discussion, it follows that $p_t$ is a $p$-value for $\mathcal{H}_{0,t}$. Now observe if, for some $\tau_n \in [n-1]$, $(X_1,\cdots,X_{\tau_n}) \sim \P_0$ and $(X_{\tau_n+1},\cdots,X_n) \sim \P_1$ with $\P_0 \neq \P_1$ , then $\mathcal{H}_{0,t}$ is true only for $t = \tau_n$. Thus, in such sense $\widehat{\tau}_n := \argmax_{t \in [n-1]}p_t$ is a natural estimate of the original changepoint $\tau_n$. Furthermore, by Neyman's construction, it also follows that $C^\texttt{MCP}_{n,1-\alpha}$ is a $(1 - \alpha)$ confidence set for the original changepoint $\tau_n$, as stated in the following result. 
\begin{fact}[Finite sample coverage guarantee of $C^\texttt{MCP}_{n,1-\alpha}$ (\cite{dandapanthula2025conformal})]
    For $\tau_n \in [n-1]$, let $(X_1,\cdots,X_{\tau_n}) \sim \P_0$ and $(X_{\tau_n+1},\cdots,X_n) \sim \P_1$ where $\P_0$ and $\P_1$ are exchangeable and $(X_1,\cdots,X_{\tau_n}) \perp (X_{\tau_n+1},\cdots,X_n)$ and let $C^\texttt{MCP}_{n,1-\alpha}$ be as defined in (\ref{eq:CONCH CI}). Then $\P(\tau_n \in C^\texttt{MCP}_{n,1-\alpha}) \geq 1 - \alpha$.
\end{fact}
Since we are focusing on the setting where the distributional change occurs, it is customary to always consider $\tau_n \in [n-1]$. Nonetheless, the behavior of the MCP Algorithm when $\tau_n = n$ (i.e., when there is actually no change in the data) also follows from the above discussion, which is summarized in the following proposition. 
\begin{proposition}
\label{prop:pval exchan}
    Let $X_1,\cdots,X_n \iid R$. For all $t \in [n-1]$, let $p_t^\texttt{CONF}$ be as in line 18 of Algorithm~\ref{alg: CONCH} and $C_{n,1-\alpha}^\texttt{MCP}$ be as in (\ref{eq:CONCH CI}). Let $l_{n,1-\alpha}$ be the size of $C_{n,1-\alpha}^\texttt{MCP}$ i.e. $l_{n,1-\alpha} =  \sum_{t=1}^{n-1} \mathbf{1}(t \in C_{n,1-\alpha}^\texttt{MCP})$.  
    Then for all $t \in [n-1]$, $p_t^\texttt{CONF}$ has the CDF $F$ defined as 
    \[
F(y) = 
\begin{cases}
0, & y < 0, \\[6pt]
1 - \left(1 - \tfrac{y}{2}\right)^2, & 0 \leq y < 1, \\[6pt]
1, & y \geq 1 ,
\end{cases}
\]
and $\E(l_{n,1-\alpha}) = (1 - \frac{\alpha}{2})^2(n-1).$

\end{proposition}
It is noteworthy that, in case $\tau_n = n$, $p_t^\texttt{CONF}$ has an atom at $1$ since $\P(p_t^\texttt{CONF} = 1) = \frac{1}{4}$, for all $t \in [n-1]$, and a very large fraction of the datapoints stay in the confidence set for $\tau_n$ which is illustrated in \Cref{fig:exchn}. 

In practice, to construct $W_t^{(0)}$ and $W_t^{(1)}$ (line 15 of Algorithm~\ref{alg: CONCH}), one can also use other distribution-free tests instead of the Kolmogorov-Smirnov test, such as the Anderson-Darling test or Cramér–von Mises test. Furthermore, 
by Donsker's theorem (\cite{donsker1951invariance}), as $t,n-t \to \infty$, $W_t^{(0)} \convD \sup_{z \in [0,1]}|\phi_z|, \quad W_t^{(1)} \convD \sup_{z \in [0,1]}|\phi_z|,$
where $\phi_z = B_z - zB_1,$ with $(B_z)_{z \geq 0}$ being the standard Brownian motion on $\R$. Thus, one can use this for approximating the CDF of $W_t^{(0)}$ and $W_t^{(t)}$ to perform the probability integral transform (in line 16 of Algorithm~\ref{alg: CONCH}), for large values of $t$ and $n-t$.

%\newpage

\section{Simulation Studies}
\label{simulation studies}
We now present a series of simulation studies designed to corroborate the performance of the MCP algorithm and to illustrate the theoretical properties established in the preceding sections. These experiments are conducted across a variety of settings, including both mean and non-mean changes, and under diverse distributional scenarios. We demonstrate that the algorithm maintains strong precision for the changepoint estimate and the scaled size of the confidence set for the changepoint converges to $0$ as the sample size goes to $\infty$ without relying on restrictive assumptions. Since the theoretical results are obtained using the identity score function, the same has been used in the MCP algorithm to obtain the plots of this section. Also, throughout, the miscoverage level $\alpha=0.05$.

\Cref{fig:nprmal pval} illustrates the characteristic sharp spike of MCP $p$-values around the true changepoint $\tau_n$ for a variance shift ($N(0,1)$ to $N(0,5)$). Besides the distribution-change setup, to illustrate the behavior of $p$-values of the MCP Algorithm under exchangeability, we refer to \Cref{fig:exchn}. The patterns seen in the diagram can be explained from Proposition~\ref{prop:pval exchan}. Since under exchangeability, the distribution of $p_t^\texttt{CONF}$ has an atom at $1$, in \Cref{fig:exchn}, for many time points $t$, $p_t^\texttt{CONF}$ is $1$. In addition, since the expected size of the confidence set produced by the MCP Algorithm is $(1 - \frac{0.05}{2})^2(200 - 1) \approx 189$, almost all  points are included in the confidence set. Now, for different pre- and post-change distributions, we find the relative size of the confidence set produced by the MCP Algorithm using identity score function for samples of sizes growing from $2$ upto $4000$ in \Cref{fig:rel_length_CONCH}. To illustrate various distribution changes, we examine cases where pre- and post-change distributions share the same family but differ in variance ($N(0,1)$ vs. $N(0,5)$), mean (Cauchy$(0,1)$ vs. Cauchy$(5,1)$), or both (Exp$(1)$ vs. Exp$(5)$), as well as cases involving different parametric families ($N(0,1)$ vs. Cauchy$(5,1)$). 

From \Cref{fig:rel_length_CONCH}, it can be seen that for all the cases, the relative length of the confidence set produced by the MCP Algorithm tends to $0$ as the sample size $n$ tends towards $\infty$, which is in accordance with \Cref{th: rel len CONCH}. \Cref{fig:consistency} validates the consistency of $\widehat{\tau}_n$ by displaying the ratio $\widehat{\tau}_n/\tau_n$ for sample sizes up to $8000$ with $\tau_n = 0.5n$. Across all distributional shifts, the ratio converges to $1$ as $n \to \infty$, empirically corroborating the asymptotic guarantee of \Cref{th: rel len CONCH}. Finally, we provide extensive additional validations across diverse distribution shifts and score functions in \Cref{fig:norm_norm} - \ref{fig:norm_cauchy}. These studies illustrate that the MCP algorithm maintains reliable performance across a wide range of data-generating processes under the distributional change setup.
\begin{figure}
    \centering
    \includegraphics[width=\linewidth]{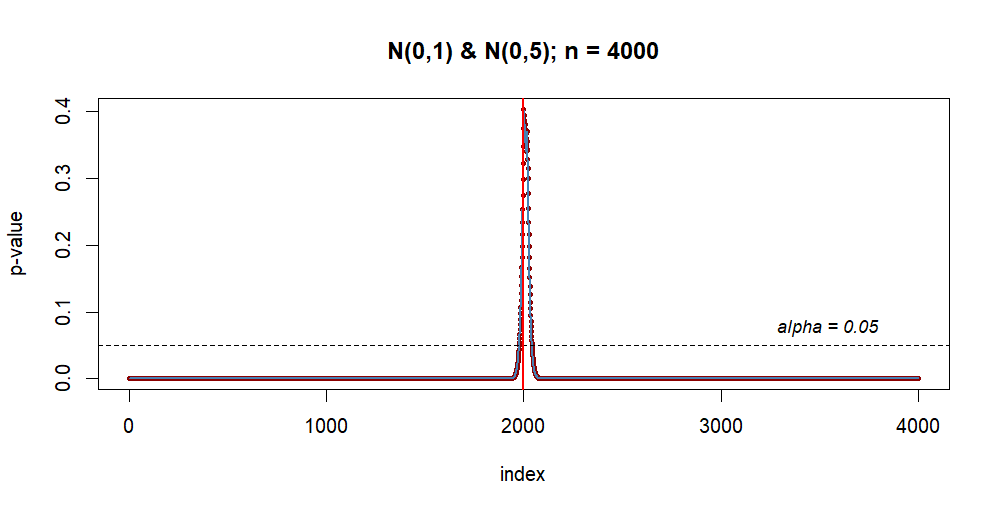}
    \caption{$p$-values produced by the \texttt{MCP Algorithm}  (with the score function $s(z) = z$) for $N(0,1)$ pre-change and $N(0,5)$ post-change distribution on a sample of size $n = 4000$ and true changepoint at $\tau_n = 0.5n$ (marked by red line). In the presence of a distribution change, the $p$-values exhibit a sharp spike-type pattern around the true changepoint.}
    \label{fig:nprmal pval}
\end{figure}
\begin{figure}
    \centering
    \includegraphics[width=0.45\linewidth]{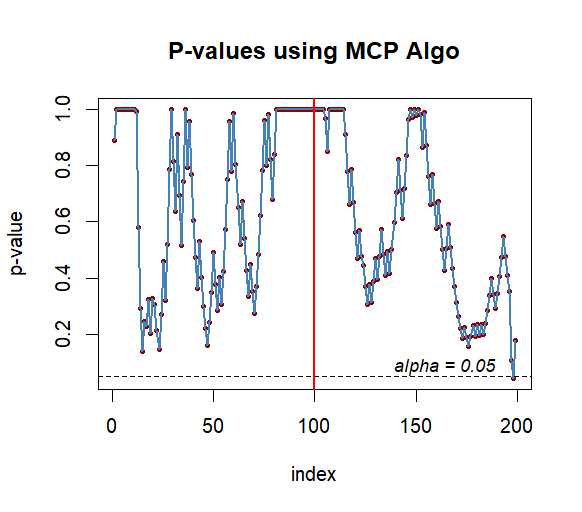}
    \includegraphics[width=0.45\linewidth]{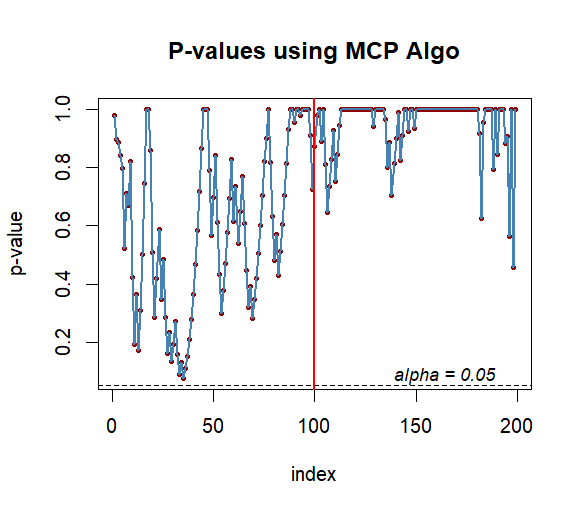}
    \caption{$p$-values produced by the \texttt{MCP Algorithm} (with the score function $s(z) = z$) for a sample of size $200$ exchangeably drawn from $N(0,1)$ (left) and Exp$(1)$ (right). This figure demonstrates Proposition \ref{prop:pval exchan} and illustrates the existence of atoms in the marginal distribution of the $p$-values under exchangeability of the whole sample.}
    \label{fig:exchn}
\end{figure}
\begin{figure}
    \centering
    % angle=-90 rotates it 90 degrees clockwise
    \includegraphics[width=1.3\linewidth, angle=-90]{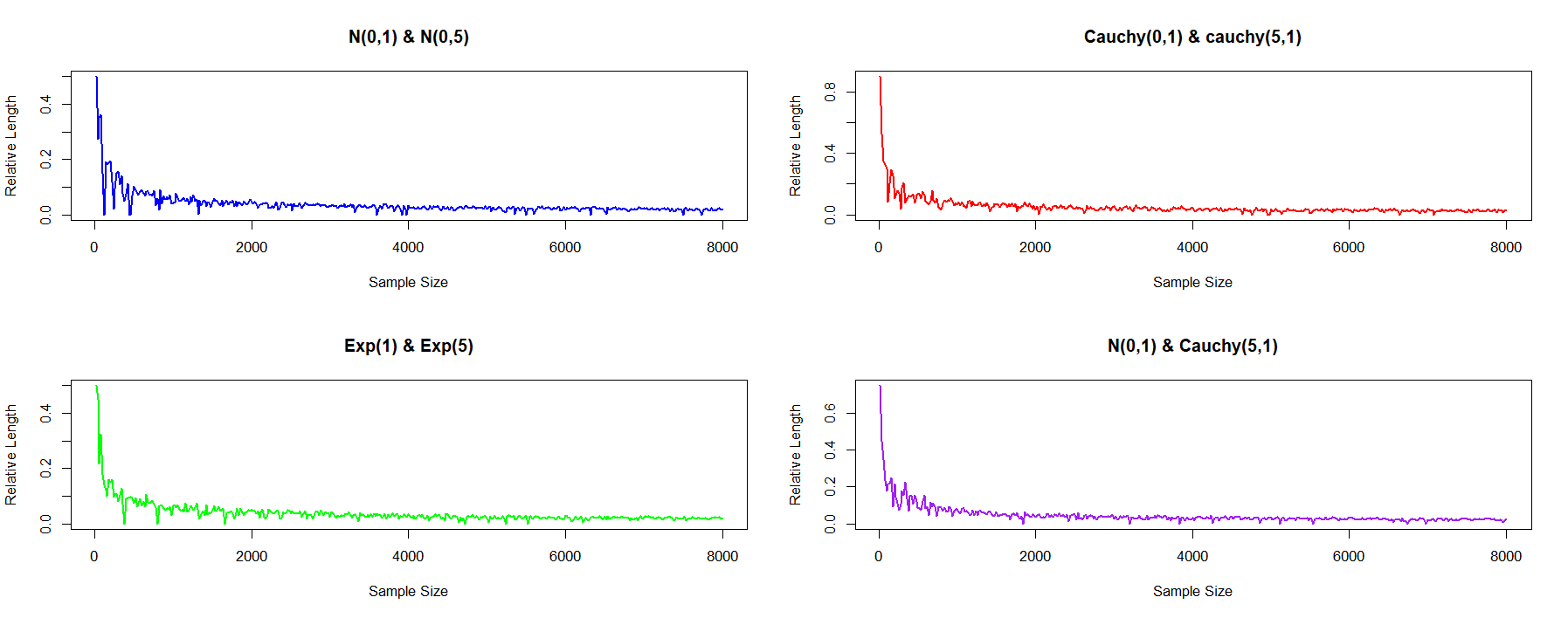}
    \caption{Relative length of the CI of \texttt{MCP Algorithm} for mentioned pre and post-change distributions, sample size $n$, change-point at $\tau_n = \frac{n}{2}$ i.e. $c = 0.5$, significance level $\alpha = 0.05$ and score function $s(z) = z$. For all the cases, the relative length is seen to converge to $0$, as guaranteed in \Cref{th: rel len CONCH}.}
    \label{fig:rel_length_CONCH}
\end{figure}
\begin{comment}
    \begin{figure}
    \centering
    \includegraphics[width=\linewidth]{rel_length_CONCH.png}
    \caption{Relative length of the CI of \texttt{MCP Algorithm} for mentioned pre and post-change distributions, sample size $n$, change-point at $\tau_n = \frac{n}{2}$ i.e. $c = 0.5$, significance level $\alpha = 0.05$ and score function $s(z) = z$. For all the cases, the relative length is seen to converge to $0$, as guaranteed in \Cref{th: rel len CONCH}.}
    \label{fig:rel_length_CONCH}
\end{figure}
\end{comment}
\begin{figure}
    \centering
    \includegraphics[width=0.45\linewidth]{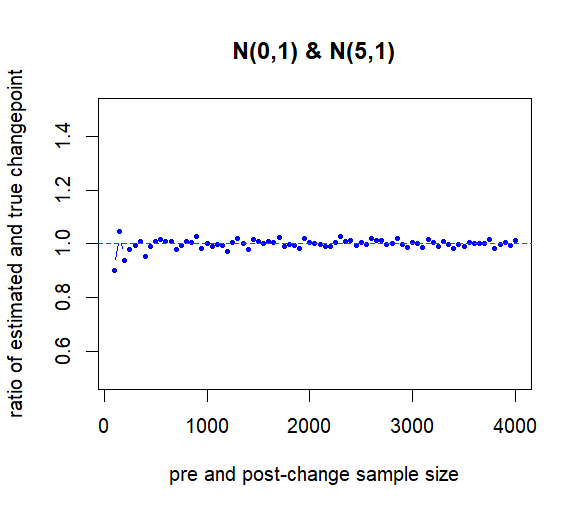}
    \includegraphics[width=0.45\linewidth]{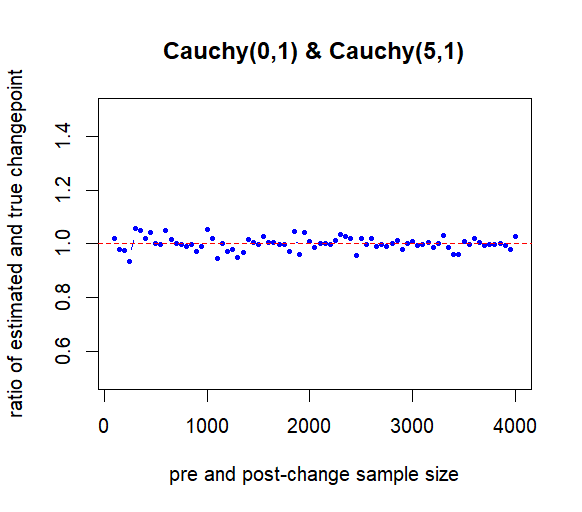}
    \includegraphics[width=0.45\linewidth]{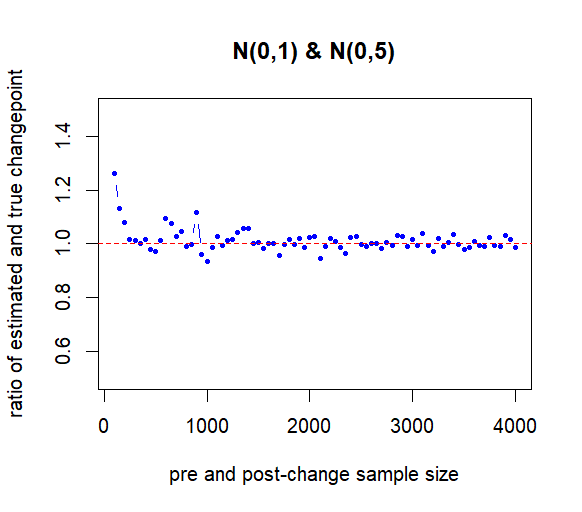}
    \includegraphics[width=0.45\linewidth]{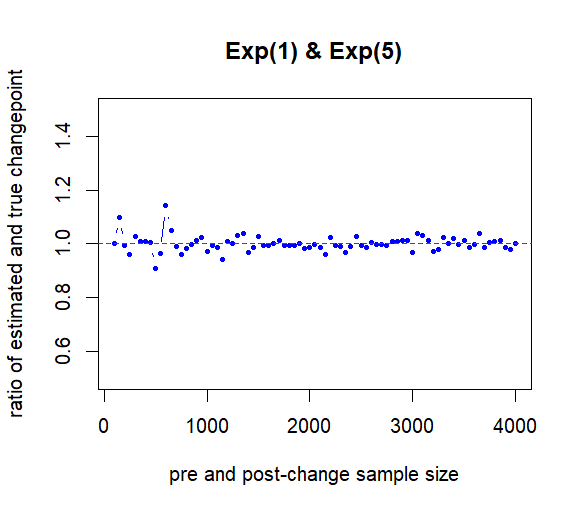}
    \includegraphics[width=0.45\linewidth]{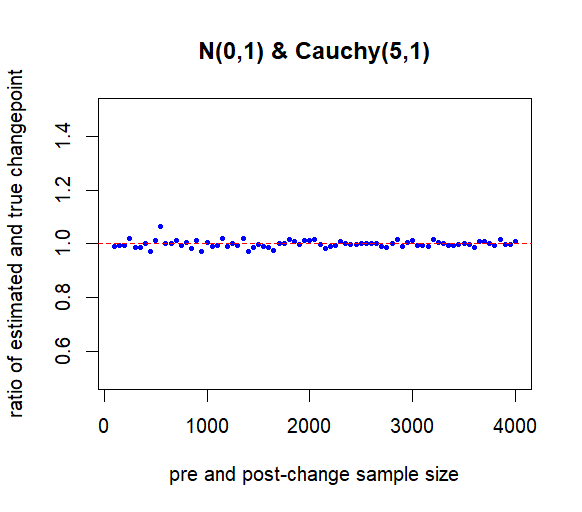}
    \includegraphics[width=0.45\linewidth]{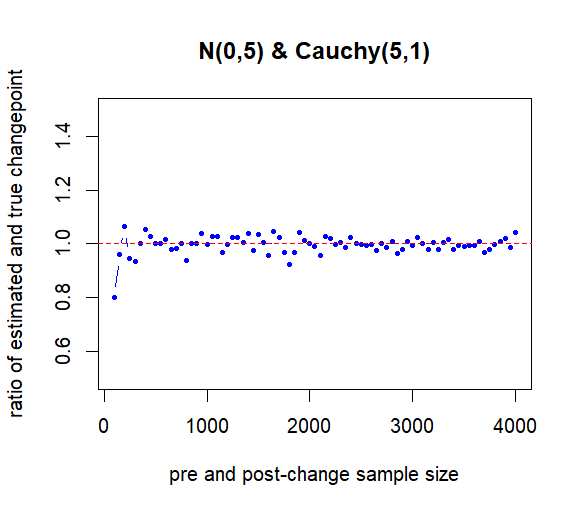}
    \caption{Ratio of estimated change-point ($\widehat{\tau}_n$) and true change-point ($\tau_n = cn, c = 0.5$) for mentioned pre and post-change distributions with $n$ to be the total sample size. As the sample size increases, the ratio $\frac{\widehat{\tau}_n}{\tau_n}$ is seen to converge to $1$, as guaranteed in \Cref{th: rel len CONCH}.}
    \label{fig:consistency}
\end{figure}
\begin{figure}
    \centering
    \includegraphics[width=\linewidth]{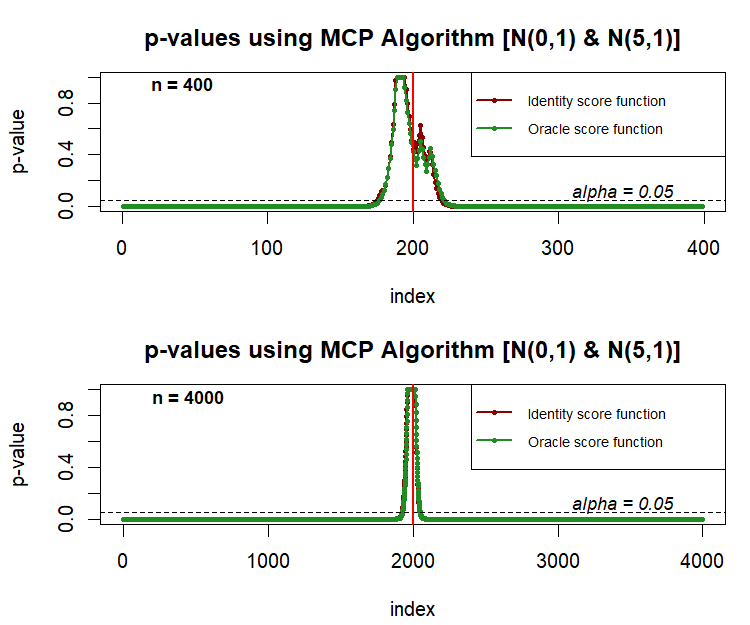}
    \caption{$p$-values of the \texttt{MCP Algorithm} for sample of size $n = 400$(above), $n = 4000$(below), changepoint at $\tau_n = \frac{n}{2}$ i.e. $c = 0.5$, pre and post-change distributions $\mathcal{N}(0,1)$ and $\mathcal{N}(5,1)$ respectively, for identity and oracle score functions. The oracle score function yields a narrower confidence set for $\tau_n$ as compared to the identity score function. As the sample size increases, the confidence set size decreases. }
    \label{fig:norm_norm}
\end{figure}
\begin{figure}
    \centering
    \includegraphics[width=\linewidth]{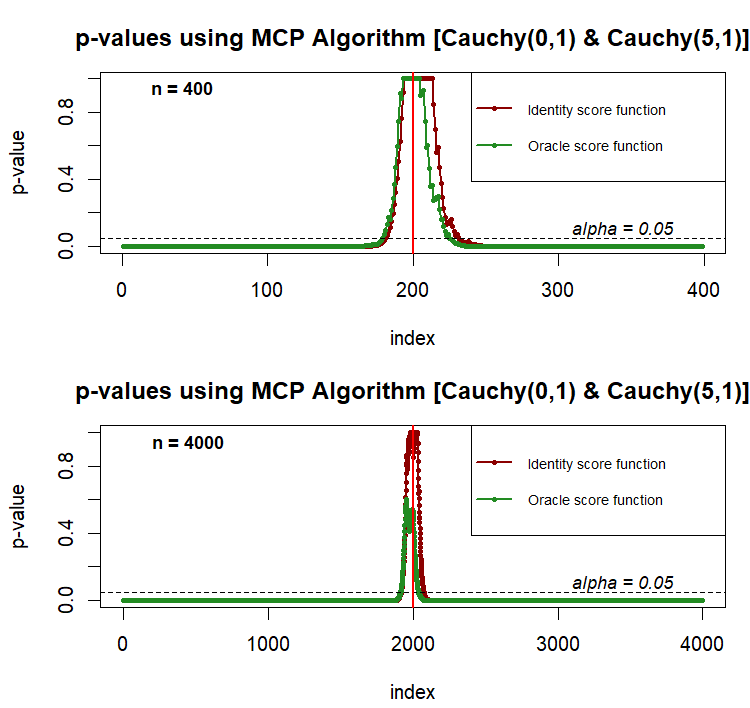}
    \caption{$p$-values of the \texttt{MCP Algorithm} for sample of size $n = 400$(above), $n = 4000$(below), changepoint at $\tau_n = \frac{n}{2}$ i.e. $c = 0.5$, pre and post-change distributions Cauchy$(0,1)$ and Cauchy$(5,1)$ respectively, for identity and oracle score functions. The oracle score function yields a narrower confidence set for $\tau_n$ as compared to the identity score function. As the sample size increases, the confidence set size decreases. }
    \label{fig:cauchy_cauchy}
\end{figure}
\begin{figure}
    \centering
    \includegraphics[width=\linewidth]{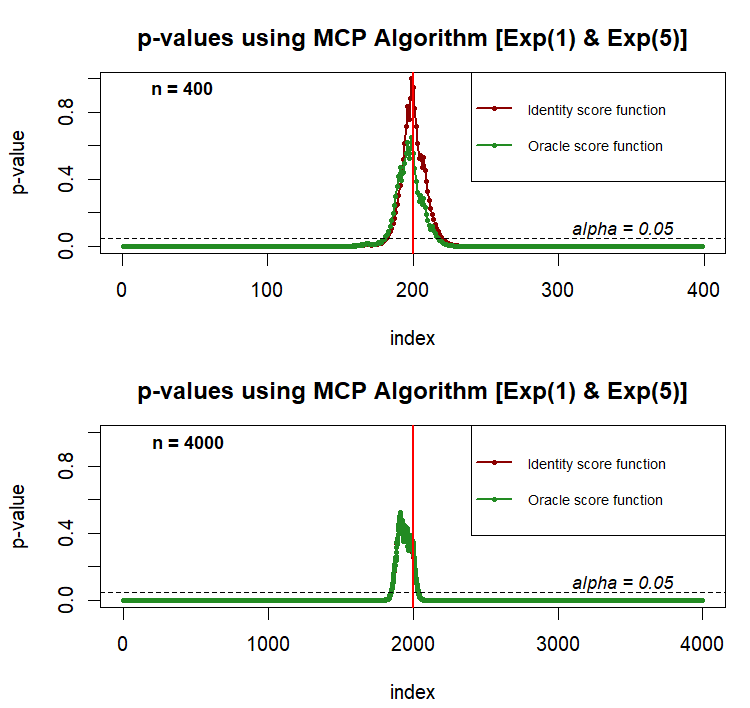}
    \caption{$p$-values of the \texttt{MCP Algorithm} for sample of size $n = 400$(above), $n = 4000$(below), changepoint at $\tau_n = \frac{n}{2}$ i.e. $c = 0.5$, pre and post-change distributions Exp$(1)$ and Exp$(5)$ respectively, for identity and oracle score functions. The oracle score function yields a narrower confidence set for $\tau_n$ as compared to the identity score function. As the sample size increases, the confidence set size decreases. }
    \label{fig:exp_exp}
\end{figure}
\begin{figure}
    \centering
    \includegraphics[width=\linewidth]{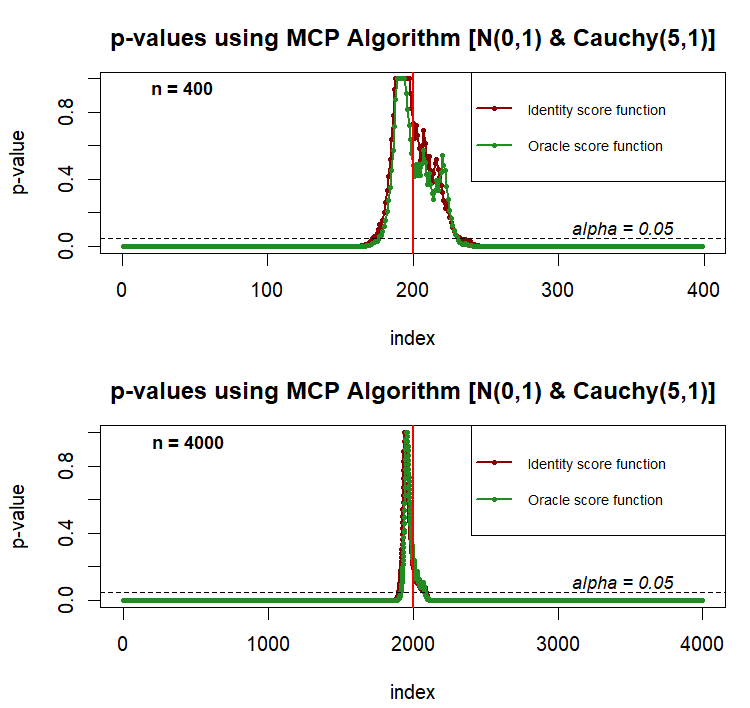}
    \caption{$p$-values of the \texttt{MCP Algorithm} for sample of size $n = 400$(above), $n = 4000$(below), changepoint at $\tau_n = \frac{n}{2}$ i.e. $c = 0.5$, pre- and post-change distributions $\mathcal{N}(0,1)$ and Cauchy$(5,1)$ respectively, for identity and oracle score functions. The oracle score function yields a narrower confidence set for $\tau_n$ as compared to the identity score function. As the sample size increases, the confidence set size decreases. }
    \label{fig:norm_cauchy}
\end{figure}

\clearpage
\newpage

\section{Optimal Score Functions for different kinds of Distribution Changes}
\label{Appendix A}
The performance of the conformal $p$-value-based method depends on the choice of the underlying score function. As shown in the following result, in the distribution-change setup, the likelihood ratio of the post and pre-change distribution happens to be the most optimal score function yielding the highest power.

\begin{fact}[Conformal Neyman Pearson Lemma (\cite{dandapanthula2025conformal})]\\ 
\label{Th:Th1}
    Suppose $X_1,\cdots,X_\tau \iid R$ and $X_{\tau+1},\cdots,X_n \iid Q$ where $Q,R$ are absolutely continuous with respect to the Lebesgue measure and have densities $q,r$ respectively. For any real-valued Borel measurable function $s$, define 
    \begin{equation}
        \label{conf pval}
        p_n[s] = \frac{1}{n} \sum_{i=1}^n \bigg( \mathbf{1}\big(s(X_i) > s(X_n) \big) + \theta_n \mathbf{1}\big( s(X_i) = s(X_n) \big) \bigg),
    \end{equation}
    where $\theta_n \sim U(0,1)$ and is independent of $(X_1,\cdots,X_n)$. Then, for the oracle score function $s^*(x) = \frac{q(x)}{r(x)}$, for Lebesgue-almost every $u \in (0,1)$, we have 
    \[ \E\bigg[p_n[s^*] \bigg| s^*(X_n) = F_{s^*(X_n)}^{-1}(u) \bigg] \geqslant \E\bigg[p_n[s] \bigg|s(X_n) = F_{s(X_n)}^{-1}(u)\bigg], \] where $F_{s(X_n)}^{-1}$ denotes the quantile function of $s(X_n)$.  
\end{fact}

\Cref{Th:Th1} has an useful implication for tests based on conformal $p$-values. Suppose, we observe independent random variables $X_1,\cdots,X_n$ and want to test $\mathcal{H}_0: X_1,\cdots,X_n \hspace{0.3cm} \text{are i.i.d.},$
versus the alternative $\mathcal{H}_1: X_1,\cdots,X_\tau \iid R, \hspace{0.2cm} X_{\tau+1},\cdots,X_n \iid Q,$
using $p_n[s]$. In case the likelihood ratio $\frac{q(x)}{r(x)}$ yields the most powerful conformal test. Now, when the distributional change occurs in a very particular way, it is customary to find the optimal score function. The most common changes in this regard can be changes in the covariate, Label, or the regression function when we have paired observations. In the subsequent sections, we explore the most optimal score functions in the above changes and discuss the general properties in that context. 
\subsection{Covariate Shift}
When we have paired data $\{(X_i,Y_i)\}_{i=1}^n$, covariate shift refers to the setup where the distribution of $X$ changes after the changepoint occurs but the distribution of $Y|X$ remains throughout the data stream. So formally, in our case if $\tau$ denotes the changepoint, then $(X_1,Y_1),\cdots,(X_\tau,Y_\tau) \iid R_X \times R_{Y|X}$ and $(X_{\tau+1},Y_{\tau+1}),\cdots,(X_n,Y_n) \iid Q_X \times R_{Y|X}$. Here, the notation $R_X \times R_{Y|X}$ means that $X$ follows distribution $R_X$ and then, conditionally on $X$, $Y \mid X$ follows distribution $R_{Y|X}$ (and, $Q_X \times R_{Y|X}$ is defined similarly).

 The key insight in this setup is if $R$ and $Q$ denotes the pre-chage and post-change distributions of the paired observations respectively, then the likelihood ratio $\frac{dQ}{dR}$ is same with $\frac{dQ_X}{dR_X}$, so it does not depend on $R_{Y|X}$. Formally, $\frac{dQ_X}{dR_X}$ denotes the Radon–Nikodym derivative relating the distribution $Q_X$ of the pre-change data point’s feature vector to the distribution $R_X$ of the post-change features. For example, if $R_X$ and $Q_X$ both have densities $r_X$ and $q_X$ respectively with respect to a common measure (e.g. Lebesgue Measure), then $\frac{dQ_X}{dR_X}$ can simply be taken to be the ratio of the densities $\frac{q_X}{r_X}$. This motivates us to find the optimal score function in Covariate Shift, which we formally state in the following theorem. 

 \begin{apptheorem}[Conformal Neyman Pearson Lemma for Covariate Shift]\\
 \label{th:NP Cov}
Suppose $(X_1, Y_1), \ldots, (X_\tau, Y_\tau) \overset{\text{iid}}{\sim} R_X \times R_{Y|X}$ and $(X_{\tau+1}, Y_{\tau+1}), \ldots, (X_n, Y_n) \overset{\text{iid}}{\sim} Q_X \times R_{Y|X}$, where $Q_X \times R_{Y|X}, R_X \times R_{Y|X}$ are absolutely continuous with respect to the Lebesgue measure and have densities $q, r$ respectively. Let $q_X, r_X$ be marginal densities of $X$ for $q, r$ respectively. For any real-valued Borel measurable function $s$, let $p_n[s]$ be as in (\ref{conf pval}).
Define the \textit{$X-$marginal likelihood conformity score} as,
\[
s_X^*(x,y) = \frac{q_X(x)}{r_X(x)}.
\]
Then
\[
\E\left[ p_n[s_X^*] \right] \geq \E\left[ p_n[s] \right].
\]

 \end{apptheorem}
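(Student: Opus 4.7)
The plan is to reduce the claim to the general Conformal Neyman--Pearson Lemma (\Cref{Th:Th1}) by exploiting the structural simplification afforded by covariate shift. I will view each paired observation $(X_i, Y_i)$ as a single vector-valued random variable on the product space, so that the pre- and post-change joint laws have densities $r(x,y) = r_X(x)\, r_{Y\mid X}(y\mid x)$ and $q(x,y) = q_X(x)\, r_{Y\mid X}(y\mid x)$. Under covariate shift the conditional density $r_{Y\mid X}$ is common to both, so it cancels in the joint likelihood ratio, giving
\begin{equation*}
\frac{q(x,y)}{r(x,y)} \;=\; \frac{q_X(x)}{r_X(x)} \;=\; s_X^*(x,y).
\end{equation*}
This identifies the joint oracle likelihood-ratio score with the $X$-marginal score $s_X^*$, and is the only structural input from the covariate-shift assumption.

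Next, I apply \Cref{Th:Th1} directly to the vector-valued observations $\{(X_i, Y_i)\}_{i=1}^n$ with oracle score $s^* = s_X^*$. This yields the conditional domination
\begin{equation*}
\mathbb{E}\bigl[p_n[s_X^*] \,\bigm|\, s_X^*(X_n, Y_n) = F^{-1}_{s_X^*(X_n,Y_n)}(u)\bigr] \;\geq\; \mathbb{E}\bigl[p_n[s] \,\bigm|\, s(X_n, Y_n) = F^{-1}_{s(X_n,Y_n)}(u)\bigr]
\end{equation*}
for Lebesgue-almost every $u \in (0,1)$. Integrating both sides over $u$ and using the probability-integral transform, so that both $F_{s(X_n,Y_n)}(s(X_n,Y_n))$ and $F_{s_X^*(X_n,Y_n)}(s_X^*(X_n,Y_n))$ are uniform on $(0,1)$, converts this pointwise conditional inequality into the claimed unconditional inequality $\mathbb{E}[p_n[s_X^*]] \ge \mathbb{E}[p_n[s]]$.

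The principal obstacle is not substantive: once \Cref{Th:Th1} is in hand, the covariate-shift statement follows almost immediately from the observation that $r_{Y\mid X}$ cancels in the joint likelihood ratio. The only technical care that remains is justifying the probability-integral change of variables when $s$ or $s_X^*$ induces atoms in the distribution of the last score, which is handled by a standard disintegration argument that is compatible with the randomization variable $\theta_n$ already built into the definition of $p_n[s]$.
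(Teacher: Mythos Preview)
Your proposal is correct and takes essentially the same approach as the paper: the paper's proof consists solely of the observation that under covariate shift the joint likelihood ratio reduces to $q_X(x)/r_X(x)$, and then invokes \Cref{Th:Th1}. Your write-up is slightly more explicit in spelling out the integration over $u$ needed to pass from the conditional inequality of \Cref{Th:Th1} to the unconditional statement, which the paper leaves implicit.
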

 \begin{pf}
     The proof of Theorem~\ref{th:NP Cov} follows from \Cref{Th:Th1} and the observation that for the assumptions, 
 \[ \frac{d(Q_X \times R_{Y|X})}{d(R_X \times R_{Y|X})}(x,y) = \frac{q_X(x)}{r_X(x)}. \]
 Thus, in this setup, the optimal score function turns out to be the ratio of densities of the covariate. \qed

 \end{pf}

\subsection{Label Shift}
When we have paired data $\{(X_i,Y_i)\}_{i=1}^n$, label shift refers to the setup where the distribution of $Y$ changes and the distribution of $X \mid Y$ is held fixed throughout the data stream. Label shift is common in many classification problems. Formally, in our case if $\tau$ denotes the changepoint, then $(X_1,Y_1),\cdots,(X_\tau,Y_\tau) \iid R_{X|Y} \times R_Y$ and $(X_{\tau+1},Y_{\tau+1}),\cdots,(X_n,Y_n) \iid R_{X|Y} \times Q_Y$. Here, the notation $R_{X|Y} \times R_Y$ means that $Y$ follows distribution $R_Y$ and then $X \mid Y$ follows distribution $R_{X|Y}$, and similarly, for $R_{X|Y} \times Q_Y$.

 The treatment of this case is similar to Covariate Shift.
 If $R$ and $Q$ denotes the pre-chage and post-change distributions of the paired observations respectively, then the likelihood ratio $\frac{dQ}{dR}$ is same with $\frac{dQ_Y}{dR_Y}$, so it does not depend on $R_{X|Y}$. The next theorem, which is similar to Theorem~\ref{th:NP Cov}, gives the optimal score function for the Label Shift setup. 

 \begin{apptheorem}[Conformal Neyman Pearson Lemma for Label Shift]\\
 \label{th:NP Label}
     Suppose $(X_1, Y_1), \ldots, (X_\tau, Y_\tau) \overset{\text{iid}}{\sim} R_{X|Y} \times R_Y$ and $(X_{\tau+1}, Y_{\tau+1}), \ldots, (X_n, Y_n) \overset{\text{iid}}{\sim} R_{X|Y} \times Q_Y$, where $R_{X|Y} \times Q_Y, R_{X|Y} \times R_Y$ are absolutely continuous with respect to the Lebesgue measure and have densities $q, r$ respectively. Let $q_Y, r_Y$ be marginal densities of $Y$ for $q, r$ respectively. For any real-valued Borel measurable function $s$, let $p_n[s]$ be as in (\ref{conf pval}). Define the $Y-$\textit{marginal likelihood conformity score} as,
     \[ s^*_Y(x,y) = \frac{q_Y(y)}{r_Y(y)}. \]
      Then
\[
\E\left[ p_n[s_Y^*] \right] \geq \E\left[ p_n[s] \right].
\]
 \end{apptheorem}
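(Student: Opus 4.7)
The plan is to mirror the proof of Theorem~\ref{th:NP Cov} essentially verbatim, reducing the label shift case to the general Conformal Neyman--Pearson Lemma (\Cref{Th:Th1}) by computing the joint likelihood ratio explicitly.

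First I would write the pre- and post-change joint densities using the conditional-marginal factorization. Under the label shift model, the joint density of $(X_i, Y_i)$ for $i \le \tau$ is $r(x,y) = r_{X|Y}(x \mid y)\, r_Y(y)$, and for $i > \tau$ it is $q(x,y) = r_{X|Y}(x \mid y)\, q_Y(y)$, since by assumption only the marginal of $Y$ changes while $X \mid Y$ stays fixed. Cancelling the common conditional factor gives
\[
\frac{dQ}{dR}(x,y) \;=\; \frac{q(x,y)}{r(x,y)} \;=\; \frac{r_{X|Y}(x \mid y)\, q_Y(y)}{r_{X|Y}(x \mid y)\, r_Y(y)} \;=\; \frac{q_Y(y)}{r_Y(y)} \;=\; s_Y^*(x,y),
\]
on the support of $R_{X|Y}(\cdot \mid y)\, r_Y(y)$, with the convention that the ratio is taken on $\{r_Y > 0\}$ (outside this set, $Q_Y \ll R_Y$ would be required for the likelihood ratio to be well-defined, which is implicit in the absolute continuity assumption).

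Then I would invoke \Cref{Th:Th1} directly with the oracle score $s^*(x,y) = \frac{dQ}{dR}(x,y) = s_Y^*(x,y)$. That result yields, for Lebesgue-almost every $u \in (0,1)$,
\[
\E\!\left[p_n[s_Y^*] \,\middle|\, s_Y^*(X_n,Y_n) = F_{s_Y^*(X_n,Y_n)}^{-1}(u)\right] \;\geq\; \E\!\left[p_n[s] \,\middle|\, s(X_n,Y_n) = F_{s(X_n,Y_n)}^{-1}(u)\right],
\]
for any Borel measurable score $s$. Taking expectations on both sides over $u \sim U(0,1)$ (equivalently, marginalising using the probability integral transform) produces the unconditional inequality $\E[p_n[s_Y^*]] \geq \E[p_n[s]]$ claimed in the theorem.

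There is essentially no substantive obstacle: the entire content is the observation that conditioning $X$ on $Y$ makes the conditional factor cancel in the Radon--Nikodym derivative, so the problem collapses to the already-established general result. The only mildly delicate point to flag in the write-up is ensuring that the joint densities $q$ and $r$ are well-defined with respect to a common dominating measure (product of Lebesgue on $\mathcal{X}$ and the counting or Lebesgue measure on the label space), so that the factorization through $r_{X|Y}$ is rigorous and the cancellation step is valid almost everywhere under $R$.
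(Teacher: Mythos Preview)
Your proposal is correct and follows essentially the same approach as the paper: compute the joint Radon--Nikodym derivative, observe that the conditional factor $r_{X|Y}$ cancels so that $\frac{dQ}{dR}(x,y)=q_Y(y)/r_Y(y)$, and then invoke \Cref{Th:Th1}. You are slightly more explicit than the paper in spelling out the marginalization over $u\sim U(0,1)$ to pass from the conditional inequality of \Cref{Th:Th1} to the unconditional one stated here, but this is exactly the intended reduction.
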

 \begin{pf}
     The proof of Theorem~\ref{th:NP Label} follows from \Cref{Th:Th1} and the observation that for the assumptions, 
 \[ \frac{d(R_{X|Y} \times Q_Y)}{d(R_{X|Y} \times R_Y)}(x,y) = \frac{q_Y(y)}{r_Y(y)}. \]
 Thus, in this setup, the optimal score function turns out to be the ratio of densities of the Label. \qed

 \end{pf}

\subsection{Regression Function Shift}
When we have paired data $\{(X_i,Y_i)\}_{i=1}^n$, regression function shift refers to the setup where the distribution of $Y|X$ changes and the distribution of $X$ is held fixed throughout the data stream. The Regression Function shift is common in many regression discontinuity problems. Formally, in our case if $\tau$ denotes the changepoint, then $(X_1,Y_1),\cdots,(X_\tau,Y_\tau) \iid R_{X} \times R_{Y|X}$ and $(X_{\tau+1},Y_{\tau+1}),\cdots,(X_n,Y_n) \iid R_{X} \times Q_{Y|X}$. Here, the notation $R_{X} \times R_{Y|X}$ means that $X$ follows distribution $R_X$ and contional on $X$, $Y|X$ follows distribution $R_{Y|X}$, and similarly, for $R_{X} \times Q_{Y|X}$.

 If $R$ and $Q$ denote the pre-change and post-change distributions of the paired observations respectively, then the likelihood ratio $\frac{dQ}{dR}$ is the same as $\frac{dQ_{Y|X}}{dR_{Y|X}}$, so it does not depend on $R_{X}$. The next theorem gives the optimal score function for the Regression Function Shift setup. 

 \begin{apptheorem}[Conformal Neyman Pearson Lemma for Regression Function Shift]\\
 \label{th:NP Reg}
     Suppose $(X_1,Y_1),\cdots,(X_\tau,Y_\tau) \iid R_{X} \times R_{Y|X}$ and $(X_{\tau+1},Y_{\tau+1}),\cdots,(X_n,Y_n) \iid R_{X} \times Q_{Y|X}$, where $R_{X} \times Q_{Y|X}, R_{X} \times R_{Y|X}$ are absolutely continuous with respect to the Lebesgue measure and have densities $q, r$ respectively. Let $q_{Y|X}, r_{Y|X}$ be conditional densities of $Y|X$ for $q, r$ respectively. For any real-valued Borel measurable function $s$, let $p_n[s]$ be as in (\ref{conf pval}). Define the \textit{$Y|X-$conditional likelihood conformity score} as, 
\[
s_{Y|X}^*(x,y) = \frac{q_{Y|X}(y|x)}{r_{Y|X}(y|x)}. 
\]
 Then
\[
\E\left[ p_n[s_{Y|X}^*] \right] \geq \E\left[ p_n[s] \right].
\]
 \end{apptheorem}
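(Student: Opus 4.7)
The plan is to reduce Theorem~\ref{th:NP Reg} to the general Conformal Neyman--Pearson Lemma (Fact~\ref{Th:Th1}) by treating the paired observation $(X,Y)$ as a single random variable taking values in the product space. This is completely parallel to the proofs of Theorems~\ref{th:NP Cov} and~\ref{th:NP Label}: once we identify the Radon--Nikodym derivative of the post-change joint law with respect to the pre-change joint law, the result is immediate.

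Concretely, I would first note that under the regression function shift model the joint densities factor as
\[
r(x,y) = r_X(x)\, r_{Y|X}(y\mid x), \qquad q(x,y) = r_X(x)\, q_{Y|X}(y\mid x),
\]
so that on the support of $r$ the likelihood ratio is
\[
\frac{dQ}{dR}(x,y) \;=\; \frac{q(x,y)}{r(x,y)} \;=\; \frac{q_{Y|X}(y\mid x)}{r_{Y|X}(y\mid x)} \;=\; s^*_{Y|X}(x,y),
\]
because the common marginal factor $r_X(x)$ cancels and makes the ratio independent of $R_X$. Next I would apply Fact~\ref{Th:Th1} with the paired variables $Z_i := (X_i, Y_i)$ in place of the scalar $X_i$ there (the statement of Fact~\ref{Th:Th1} is dimension-agnostic, since the conformal $p$-value only uses the real-valued score $s(Z_i)$). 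With $Z_1,\ldots,Z_\tau \iid R$ and $Z_{\tau+1},\ldots,Z_n \iid Q$, the oracle score from Fact~\ref{Th:Th1} is exactly $(dQ/dR)(z) = s^*_{Y|X}(x,y)$.

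Applying the conditional inequality of Fact~\ref{Th:Th1} and then taking expectations over $u \sim \mathcal{U}(0,1)$ (equivalently, over $s^*_{Y|X}(X_n, Y_n)$ via the probability integral transform) yields the unconditional inequality
\[
\E\bigl[p_n[s^*_{Y|X}]\bigr] \;\geq\; \E\bigl[p_n[s]\bigr],
\]
which is the claim. The main technical subtlety, though minor, is verifying that the oracle optimality in Fact~\ref{Th:Th1} transfers from the conditional form to the unconditional expectation; this follows by integrating both sides with respect to the distribution of the relevant quantile variable and invoking the law of total expectation, exactly as is implicitly done in the proofs of Theorems~\ref{th:NP Cov} and~\ref{th:NP Label}. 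No further obstacle arises, since no assumption on $R_X$ is needed: the score depends only on the conditional densities, which is precisely what makes this the natural optimum in the regression function shift setting.
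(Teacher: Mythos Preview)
Your proposal is correct and follows essentially the same route as the paper: compute the Radon--Nikodym derivative of the post-change joint law with respect to the pre-change joint law, observe that the common marginal $r_X$ cancels so that it equals $q_{Y|X}(y\mid x)/r_{Y|X}(y\mid x)$, and then invoke Fact~\ref{Th:Th1}. Your additional remark about passing from the conditional inequality in Fact~\ref{Th:Th1} to the unconditional one by integrating over $u$ is a correct and useful clarification that the paper leaves implicit.
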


  \begin{pf}
     The proof of Theorem~\ref{th:NP Reg} follows from \Cref{Th:Th1} and the observation that for the assumptions, 
 \[ \frac{d(R_X \times Q_{Y|X})}{d(R_X \times R_{Y|X})}(x,y) = \frac{q_{Y|X}(y|x)}{r_{Y|X}(y|x)}.\]
 Thus, in this setup, the optimal score function turns out to be the ratio of conditional densities of the label given the covariate. \qed

 \end{pf}

 \subsection{Optimal Score Function for Multiple Change-points}
We now discuss the case when there are multiple change-points. In this setup, we have observations $X_1,\cdots,X_n$ with change-points $\tau_1,\tau_2,\cdots,\tau_k$ and distribution functions $R_1,\cdots,R_k,Q$ with $\{X_i : 1 \leqslant i \leqslant \tau_1\}$ marginally follow distribution $R_1$, $\{X_i : \tau_1+1 \leqslant i \leqslant \tau_2\}$ marginally follow distribution $R_2$, $\cdots ,\{X_i : \tau_{k-1}+1 \leqslant i \leqslant \tau_k\}$ marginally follow distribution $R_k$ and $\{X_i : \tau_k+1 \leqslant i \leqslant n\}$ marginally follow distribution $Q$. One can assume the joint distribution of the observations in addition (e.g. $X_1,\cdots,X_n$ are independent).In the next theorem, we find the optimal score function to be used in the conformal $p$-value for this setup, which generalizes \Cref{Th:Th1} for multiple change-points.  First, we prove a particular form of the Neyman-Pearson Lemma, which will be used in the next theorem. The following version of the Neyman-Pearson Lemma attempts to provide the level $\alpha$ test, $\alpha \in (0,1),$ which maximizes the weighted power for a simple null and composite alternative.

\begin{applemma}[Neyman Pearson Lemma for Weighted power maximization against composite alternative]
\label{Th: NP weighted power}
Let $R_1,\cdots,R_k,Q$ be probability distributions on $\R^k$ having densities $r_1,\cdots,r_k,q$ respectively with respect to the Lebesgue Measure. Let $\alpha \in (0,1)$. Consider the hypothesis:
\begin{equation}
    \begin{split}
        H_0&: X \sim Q, \\
        H_1&: X \sim \bigcup_{j=1}^k R_j.
    \end{split}
\end{equation}
Let $w_j \geq 0$ be nonnegative weights. Consider the test function $\varphi : \R^k \to [0,1]$ given by 
\begin{equation}
    \varphi(x) = 
    \begin{cases} 
        1 & \text{if } \sum_{j=1}^k w_j r_j(x) > c \cdot q(x), \\
        \gamma & \text{if } \sum_{j=1}^k w_j r_j(x) = c \cdot q(x), \\
        0 & \text{if } \sum_{j=1}^k w_j r_j(x) < c \cdot q(x),
    \end{cases}
    \label{eq:test}
\end{equation}
where $c \geq 0$ and $\gamma \in [0,1]$ satisfy $\mathbb{E}_{X \sim Q}[\varphi(X)] = \alpha$. Then among all test functions $\varphi' : \R^k \to [0,1]$ satisfying $\mathbb{E}_{X \sim Q}[\varphi'(X)] \leq \alpha$, the test function $\varphi$ maximizes the \texttt{weighted average power} $\sum_{j=1}^k w_j \, \mathbb{E}_{X \sim R_j}[\varphi'(X)]$ i.e. 
\begin{equation}
    \sum_{j=1}^k w_j \, \mathbb{E}_{X \sim R_j}[\varphi(X)] \geq \sum_{j=1}^k w_j \, \mathbb{E}_{X \sim R_j}[\varphi'(X)].
    \label{eq:objective}
\end{equation}
\end{applemma}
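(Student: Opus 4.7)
The plan is to reduce the weighted-power maximization to a standard Neyman--Pearson comparison by bundling the $k$ alternative densities into a single ``pseudo-density'' $\tilde r(x) := \sum_{j=1}^{k} w_j r_j(x)$. For any test $\varphi':\R^k\to[0,1]$, Fubini/Tonelli gives
\[
\sum_{j=1}^{k} w_j\,\E_{X\sim R_j}[\varphi'(X)] \;=\; \int \varphi'(x)\,\tilde r(x)\,dx,
\]
so maximizing the weighted average power is exactly the problem of maximizing $\int \varphi'\,\tilde r\,dx$ subject to the size constraint $\int \varphi'\,q\,dx\le \alpha$. Note that $\tilde r$ is a nonnegative measurable function (though not necessarily a probability density, since the $w_j$'s are arbitrary nonnegative weights); this is irrelevant to the optimization, because the classical NP argument only uses nonnegativity.

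Next I would carry out the standard pointwise-comparison trick. Let $\varphi$ be the test in \eqref{eq:test} and let $\varphi'$ be any competitor with $\E_Q[\varphi'] \le \alpha$. Consider the integrand
\[
\bigl(\varphi(x) - \varphi'(x)\bigr)\bigl(\tilde r(x) - c\,q(x)\bigr).
\]
On $\{\tilde r > cq\}$ one has $\varphi=1\ge \varphi'$ and $\tilde r - cq > 0$, so the product is $\ge 0$; on $\{\tilde r < cq\}$ one has $\varphi=0\le \varphi'$ and $\tilde r - cq < 0$, so again the product is $\ge 0$; on the boundary $\{\tilde r = cq\}$ the second factor vanishes. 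Integrating,
\[
\int (\varphi-\varphi')\,\tilde r\,dx \;\ge\; c\int (\varphi-\varphi')\,q\,dx \;=\; c\bigl(\alpha - \E_Q[\varphi']\bigr) \;\ge\; 0,
\]
where the last inequality uses $c\ge 0$ together with $\E_Q[\varphi']\le \alpha = \E_Q[\varphi]$. Unpacking $\tilde r$ recovers \eqref{eq:objective}.

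The only nonroutine issue is verifying that a pair $(c,\gamma)$ exists making $\E_Q[\varphi]=\alpha$ exactly; this is the standard ``randomization fills the gap'' argument, handled by taking $c$ to be the $(1-\alpha)$-quantile of the likelihood-type ratio $\tilde r(X)/q(X)$ under $X\sim Q$ (interpreting $\tilde r/q$ as $+\infty$ on $\{q=0\}$, where $\varphi\equiv 1$ contributes nothing to $\E_Q[\varphi]$) and choosing $\gamma\in[0,1]$ to absorb any atom of the distribution of $\tilde r(X)/q(X)$ at $c$. This existence step is slightly delicate only in that $\tilde r$ need not integrate to $1$, but the construction is purely a property of the univariate CDF of $\tilde r(X)/q(X)$ under $Q$ and so goes through verbatim from the classical proof. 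I expect the pointwise comparison to be completely routine; the only mild care is in the boundary handling on $\{q=0\}$, which is resolved by noting that every set of $Q$-measure zero contributes nothing to the size constraint, so the inequality $\int(\varphi-\varphi')\,q\,dx\ge 0$ still holds.
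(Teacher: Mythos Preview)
Your proposal is correct and takes essentially the same approach as the paper: both reduce the weighted power to an integral against the combined density $\sum_j w_j r_j$ and then invoke the Neyman--Pearson comparison. The only cosmetic difference is that the paper normalizes by $W=\sum_j w_j$ to obtain a bona fide mixture density and cites the classical NP lemma as a black box, whereas you leave $\tilde r$ unnormalized and carry out the standard pointwise-sign argument $(\varphi-\varphi')(\tilde r - cq)\ge 0$ directly; the paper in fact includes your direct argument as a commented-out alternative proof.
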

\begin{proof}
If \(W:=\sum_{j=1}^k w_j=0\) the conclusion is trivial (both sides are zero).
Assume \(W>0\) and define the mixture density
\[
r^*(x)\;:=\;\frac{1}{W}\sum_{j=1}^k w_j r_j(x).
\]
Observe that for any measurable \(\psi:\mathbb{R}^d\to[0,1]\),
\[
\sum_{j=1}^k w_j \,\mathbb{E}_{R_j}[\psi(X)]
\;=\;W\,\mathbb{E}_{R^*}[\psi(X)].
\]
Hence, maximizing the weighted average power over all tests \(\varphi'\)
with \(\mathbb{E}_Q[\varphi']\le\alpha\) is equivalent to maximizing
\(\mathbb{E}_{R^*}[\varphi']\) under the same level constraint. By the Neyman--Pearson lemma (\cite{lehmann2005testing}),
the likelihood-ratio test
\[
\varphi_{LR}(x)=
\begin{cases}
1, & \text{if } \dfrac{r^*(x)}{q(x)} > t,\\[6pt]
\gamma, & \text{if } \dfrac{r^*(x)}{q(x)} = t,\\[6pt]
0, & \text{if } \dfrac{r^*(x)}{q(x)} < t,
\end{cases}
\]
with constants \(t>0\) and \(\gamma\in[0,1]\) chosen so that \(\mathbb{E}_Q[\varphi_{LR}]=\alpha\),
maximizes \(\mathbb{E}_{R^*}[\varphi]\) among all level-\(\alpha\) tests, which is equivalent to the test $\varphi$ defined in (\ref{eq:test}) by taking $c = Wt$. Hence, we have,
\[
\sum_{j=1}^k w_j \,\mathbb{E}_{R_j}[\varphi(X)]
= W\,\mathbb{E}_{R^*}[\varphi(X)]
\ge W\,\mathbb{E}_{R^*}[\varphi'(X)]
= \sum_{j=1}^k w_j \,\mathbb{E}_{R_j}[\varphi'(X)]
\]
for every measurable \(\varphi': \R^k \to [0,1]\) with \(\mathbb{E}_Q[\varphi']\le\alpha\).
This completes the proof.
\end{proof}

 \begin{apptheorem}[Generalized Conformal Neyman Pearson Lemma for Multiple Change-points]
 \label{th: CONF NP Lemma mujlti chnagepoint}
     Suppose $R_1,\cdots,R_k,Q$ are probability distributions on $\R^k$ each of which is absolutely continuous with respect to the Lebesgue Measure on $\R^k$ and has densities $r_1,\cdots,r_k,q$ respectively. $X_1,\cdots,X_n$ are random variables satisfying $X_1,\cdots,X_{\tau_1} \iid R_1, \quad X_{\tau_1+1},\cdots,X_{\tau_2} \iid R_2, \cdots,$ $
     X_{\tau_{k-1}+1},\cdots,X_{\tau_k} \iid R_k, \quad X_{\tau_k+1},\cdots,X_n \iid Q$ where $1 < \tau_1 < \cdots < \tau_k < n$. For any Borel measurable function $s : \R^k \to \R$ define, 
     \[ p_n[s] = \frac{1}{n}\sum_{i=1}^n \mathbf{1}\big( s(X_i) > s(X_n) \big). \]
     Suppose in addition $s$ is such that $s(X_{\tau_k + 1}),s(X_n)$ are distinct almost surely. Moreover, consider the \textit{oracle conformity score}, 
     \[ s^*(x) = \frac{q(x)}{ \frac{\tau_1}{\tau_k}r_1(x) + \frac{\tau_2 - \tau_1}{\tau_k}r_2(x) + \cdots + \frac{\tau_k - \tau_{k-1}}{\tau_k}r_k(x)   }, \]
     and suppose that $s^*(X_{\tau_k + 1}),s^*(X_n)$ are distinct almost surely. Then we have 
     \[ \E[p_n[s^*]] \geqslant \E[p_n[s]].\]
 \end{apptheorem}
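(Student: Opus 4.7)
The plan is to reduce the problem to a pointwise-in-$u$ application of the weighted Neyman--Pearson lemma (Lemma~A.1), paralleling the structure used to prove the single-changepoint conformal Neyman--Pearson lemma (\Cref{Th:Th1}). First, I would expand
\[
\E\bigl[p_n[s]\bigr] \;=\; \frac{1}{n}\sum_{i=1}^{n-1}\P\bigl(s(X_i) > s(X_n)\bigr),
\]
since the $i = n$ term contributes zero. Splitting the sum according to which pre-change segment contains the index $i$, and using the hypothesis that $s(X_{\tau_k+1})$ and $s(X_n)$ are distinct a.s.\ to conclude that each of the $n - \tau_k - 1$ post-change indices contributes exactly $\tfrac12$, the only $s$-dependent quantity is
\[
V(s) \;:=\; \sum_{j=1}^k (\tau_j - \tau_{j-1})\,\P_{X\sim R_j,\,Y\sim Q}\bigl(s(X) > s(Y)\bigr)
\]
(with the convention $\tau_0:=0$), so the claim reduces to showing $V(s^*)\ge V(s)$.

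Second, I would make a quantile reparameterization of $s(X_n)$. Writing $u := F_{s,Q}(s(X_n))$, where $F_{s,Q}$ denotes the c.d.f.\ of $s(X_n)$ under $Q$, the distinct-scores assumption makes $u$ uniform on $(0,1)$, and conditioning on $u$ identifies the indicator $\mathbf{1}(s(X_i) > s(X_n))$ with the simple-null test
\[
\phi^{(s)}_u(x) \;:=\; \mathbf{1}\bigl(s(x) > F_{s,Q}^{-1}(u)\bigr)
\]
of $H_0 : X \sim Q$, whose $Q$-size depends only on $u$. This yields the representation
\[
V(s) \;=\; \int_0^1 \sum_{j=1}^k (\tau_j - \tau_{j-1})\,\E_{R_j}\bigl[\phi^{(s)}_u\bigr]\,du.
\]

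The heart of the argument is then a pointwise-in-$u$ invocation of Lemma~A.1, with weights $w_j := \tau_j - \tau_{j-1}$ and composite alternative $\{R_1,\ldots,R_k\}$: it characterizes the test that maximizes $\sum_{j=1}^k w_j\,\E_{R_j}[\phi]$ over all tests of the prescribed $Q$-size as a thresholding of the ratio
\[
\frac{\sum_{j=1}^k w_j\, r_j(x)}{q(x)} \;=\; \frac{\tau_k}{s^*(x)},
\]
and hence equivalently a thresholding of $s^*$ itself. Calibrating the size level to $u$, this maximizer matches $\phi^{(s^*)}_u$ on a $Q$-full-measure set, which gives the pointwise inequality $\sum_j w_j\,\E_{R_j}[\phi^{(s^*)}_u] \ge \sum_j w_j\,\E_{R_j}[\phi^{(s)}_u]$ for Lebesgue-a.e.\ $u\in(0,1)$; integrating in $u$ delivers $V(s^*)\ge V(s)$.

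The main obstacle I anticipate is the careful calibration of the rejection region of $\phi^{(s^*)}_u$ to the NP-optimal region produced by Lemma~A.1: the distinct-scores hypothesis for $s^*$ is exactly what ensures that the $Q$-law of $s^*(X_n)$ is atomless at the relevant quantile, making $\phi^{(s^*)}_u$ an exact-size test (rather than merely a conservative one) and hence coincident with the NP-optimizer on a $Q$-full-measure set. A secondary point is the Fubini-type justification needed to rewrite $V(s)$ as a $u$-integral of weighted powers, which is immediate from joint measurability of $(x,u)\mapsto\phi^{(s)}_u(x)$ and boundedness of the indicators involved.
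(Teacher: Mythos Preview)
Your proposal is correct and follows essentially the same route as the paper's proof: expand $\E[p_n[s]]$ by segments, use the distinct-scores assumption to reduce the post-change terms to $\tfrac12$, reparameterize $s(X_n)$ by its $Q$-quantile, and apply the weighted Neyman--Pearson lemma pointwise in $u$ before integrating. The paper's argument is nearly identical in structure; the only cosmetic difference is that the paper writes the threshold as $F_{s(X_n)}^{-1}(1-u)$ so that the $Q$-size works out to exactly $u$, and it does not pause over the calibration and Fubini points you flag (which are indeed routine here).
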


 \begin{pf}
     Since $X_1,\cdots,X_n$ are random variables satisfying $X_1,\cdots,X_{\tau_1} \iid R_1, \quad X_{\tau_1+1},\cdots,X_{\tau_2} \iid R_2, \cdots, \quad 
     X_{\tau_{k-1}+1},\cdots,X_{\tau_k} \iid R_k, \quad X_{\tau_k+1},\cdots,X_n \iid Q$, 
     Now for
\[
p_n[s] = \frac{1}{n} \sum_{i=1}^n \mathbf{1}\{s(X_i) > s(X_n)\},
\]
observe that since $s(X_{\tau_k+1}),s(X_n)$ are assumed to be distinct almost surely, hence 
\[ \P\bigg( s(X_{\tau_k+1}) > s(X_n) \bigg) = \frac{1}{2}. \]
So, with $U \sim U(0,1)$ and $U$ independent of $X_1,\cdots,X_n$, we can write
\begin{align*}
\E[p_n[s]] &= \frac{\tau_1}{n} \P\bigg(s(X_{\tau_1}) > s(X_n)\bigg) 
+ \frac{\tau_2 - \tau_1}{n} \P\bigg(s(X_{\tau_2}) > s(X_n)\bigg) 
+ \cdots 
+ \frac{n - \tau_K}{n} \cdot \frac{1}{2} \\
&= \frac{\tau_1}{n} \P\bigg(F_{s(X_n)}(s(X_{\tau_1})) > U\bigg) 
+ \frac{\tau_2 - \tau_1}{n} \P\bigg(F_{s(X_n)}(s(X_{\tau_2})) > U\bigg) 
+ \cdots \\
&\quad + \frac{\tau_k - \tau_{k-1}}{n} \P\bigg(F_{s(X_n)}(s(X_{\tau_k})) > U\bigg) 
+ \frac{n - \tau_K}{2n} \\
& \text{[By the Probability Integral Transform; } F_{s(X_n)}(s(X_n)) \sim U \text{ with } U \sim U(0,1) \text{]} \\
&= \frac{\tau_1}{n} \P\bigg(s(X_{\tau_1}) > F_{s(X_n)}^{-1}(U)\bigg) 
+ \frac{\tau_2 - \tau_1}{n} \P\bigg(s(X_{\tau_2}) > F_{s(X_n)}^{-1}(U)\bigg) 
+ \cdots \\
&\quad + \frac{\tau_k - \tau_{k-1}}{n} \P\bigg(s(X_{\tau_k}) > F_{s(X_n)}^{-1}(U)\bigg) 
+ \frac{n - \tau_k}{2n} \\
&= \sum_{i=1}^k \omega_i \, \E_{Z \sim R_i} \left[ \mathbf{1}(s(Z) > F_{s(X_n)}^{-1}(U)) \right] 
+ \frac{n - \tau_k}{2n} \\
&\quad \text{where } \omega_1 = \frac{\tau_1}{n}, \quad 
\omega_i = \frac{\tau_i - \tau_{i-1}}{n}, \quad 2 \leq i \leq k \\
&= \int_0^1 \left[ \sum_{i=1}^k \omega_i \, \E_{Z \sim R_i} \left[ \mathbf{1}(s(Z) > F_{s(X_n)}^{-1}(1-u)) \right] \right] \, du 
+ \frac{n - \tau_k}{2n}.
\end{align*}
Fix \( u \in (0,1) \) and consider the test
\[
\phi = \mathbf{1}\bigg(s(Z) > F_{s(X_n)}^{-1}(1-u)\bigg).
\]
for testing \( \tilde{H}_0: Z \sim Q \) against \( \tilde{H}_1: Z \sim \bigcup_{j=1}^k R_j \).  
By the probability integral transform lemma, the size of this test is
\[
\P_{Z \sim Q}(s(Z) > F_{s(X_n)}^{-1}(1-u)) = u
\]
Hence by the \textit{Neyman–Pearson Lemma for weighted power maximization against composite alternative}(Lemma~\ref{Th: NP weighted power}), the power of this test is maximized for \( s = \tilde{s}^* \), where \( \tilde{s}^* \) is
\[
\tilde{s}^*(X) = \frac{q(X)}{\sum_{i=1}^k \omega_i r_i(X)}.
\]
So equivalently,
\[
s^*(X) = \frac{q(X)}{\frac{\tau_1}{\tau_k} r_1(X) + \cdots + \frac{\tau_k - \tau_{k-1}}{\tau_k} r_k(X)},
\]
maximizes \( \E[p_n[s]] \) since $p_n[\Tilde{s}^*(X)] = p_n[s^*(X)]$. \qed
 \end{pf}

\newpage
\section{MCP Algorithm For a Triangular Array of Random Variables}
\label{sec: Appendix B}
Given a triangular array of data $\{X_{n,j}\}_{n \in \N, 1 \leqslant j \leqslant n}$, the version of the changepoint detection algorithm mentioned in Algorithm~\ref{alg: CONCH} and changepoint localization algorithm mentioned in \cite{dandapanthula2025conformal} to apply on $\{X_{n,j}\}_{1 \leqslant j \leqslant n}$ is the follwing:

\medskip
\begin{algorithm2e}[H]
\caption{MCP Algorithm for Changepoint Detection and Localization}
\label{alg: CONCH traiangular}
%\KwIn{$(X_{n,t})_{t=1}^n$ (dataset), $\big(s^{(0)}_{n,r,t}\big)_{1 \le r \le t < n}$ and $\big(s^{(1)}_{n,r,t}\big)_{t < r \le n}$ (left and right score function families)}
\KwIn{$(X_{n,t})_{t=1}^n$ (dataset), 
      $\big((s^{(0)}_{n,r,t})_{r=1}^{n-1}\big)_{t=1}^{n-1}$ and 
      $\big((s^{(1)}_{n,r,t})_{r=1}^{n-1}\big)_{t=1}^{n-1}$ 
      (left and right score function families)}
\KwOut{$\widehat{\tau}_n$ (estimate of changepoint $\tau_n$) and $C^{\texttt{MCP}}_{n,1-\alpha}$ (a level $1-\alpha$ confidence set for $\tau_n$)}

\For{$t \in [n-1]$}{
    \For{$r \in (1, \ldots, t)$}{
        \For{$j \in (1, \ldots, r)$}{
            $\kappa^{(t)}_{n,r,j} \leftarrow s^{(0)}_{n,r,t}\!\left(X_{n,j}\,;\,[X_{n,1}, \ldots, X_{n,r}],\,(X_{n,t+1}, \ldots, X_{n,n})\right)$\;
        }
        \textbf{end}\;
        Draw $\theta_{n,r}^{(t)} \sim \text{Unif}(0,1)$\;
        $p_{n,r}^{(t)} \leftarrow \frac{1}{r} \sum_{j=1}^{r} \left( \mathbf{1}_{\kappa^{(t)}_{n,r,j} > \kappa^{(t)}_{n,r,r}} + \theta_{n,r} \cdot \mathbf{1}_{\kappa^{(t)}_{n,r,j} = \kappa^{(t)}_{n,r,r}} \right)$\;
    }
    \textbf{end}\;

    \For{$r \in (n, \ldots, t+1)$}{
        \For{$j \in (r, \ldots, n)$}{
            $\kappa^{(t)}_{n,r,j} \leftarrow s^{(1)}_{n,r,t}\!\left(X_{n,j}\,;\,[X_{n,r+1}, \ldots, X_{n,n}],\,(X_{n,1}, \ldots, X_{n,t})\right)$\;
        }
        \textbf{end}\;
        Draw $\theta_{n,r}^{(t)} \sim \text{Unif}(0,1)$\;
        $p_{n,r}^{(t)} \leftarrow \frac{1}{n - r + 1} \sum_{j=r}^{n} \left( \mathbf{1}_{\kappa^{(t)}_{n,r,j} > \kappa^{(t)}_{n,r,r}} + \theta_{n,r} \cdot \mathbf{1}_{\kappa^{(t)}_{n,r,j} = \kappa^{(t)}_{n,r,r}} \right)$\;
    }
    \textbf{end}\;

    $\widehat{F}^{(0)}_{n,t}(z) := \frac{1}{t} \sum_{r=1}^{t} \mathbf{1}_{p_{n,r}^{(t)} \leq z}$; \quad
    $\widehat{F}^{(1)}_{n,t}(z) := \frac{1}{n - t} \sum_{r=t+1}^{n} \mathbf{1}_{p_{n,r}^{(t)} \leq z}$\;

    $W_{n,t}^{(0)} \leftarrow \sqrt{t} \cdot \text{KS}(\widehat{F}^{(0)}_{n,t}, u)$; \quad
    $W_{n,t}^{(1)} \leftarrow \sqrt{n - t} \cdot \text{KS}(\widehat{F}^{(1)}_{n,t}, u)$\;

    $p_{n,t}^\text{left} = 1 - F_t(W_{n,t}^{(0)})$; \quad $p_{n,t}^\text{right} = 1 - F_{n-t}(W_{n,t}^{(1)})$\;

    $\big($$F_n$ is the CDF of $\sqrt{n}\cdot \text{KS}(\widehat{F}_n,u)$ where $\widehat{F}_n$ is the empirical CDF of $n$ iid observation from $U(0,1)$ $\big)$\;

    $p^{\texttt{CONF}}_{n,t} = \min \{2p^\text{left}_{n,t}, 2p^\text{right}_{n,t}, 1\}$\;
}
\textbf{end}\;

$\widehat{\tau}_n \leftarrow \arg\max_{t \in [n-1]} p^{\texttt{CONF}}_{n,t}$\;
$C^{\texttt{MCP}}_{n,1-\alpha} \leftarrow \{\,t \in [n-1] : p^{\texttt{CONF}}_{n,t} > \alpha\,\}$\;
\Return $\widehat{\tau}_n$ and $C^{\texttt{MCP}}_{n,1-\alpha}$\;
\end{algorithm2e}

\newpage
\section{Auxiliary Lemmas and Proof of Results}
\label{Appendix: Proofs}

\begin{applemma}
\label{lemma1}
    Suppose $X_1,\cdots,X_\tau \iid R$ and $X_{\tau+1},\cdots,X_n \iid Q$ where $Q$ is absolutely continuous with respect to $R$ and $\frac{dQ}{dR}(x) < \infty$ for all $x \in \R$. Let, $\widehat{\mathcal{F}}_n = \frac{1}{n}\sum_{i=1}^n \delta_{X_i}$
    denote the empirical distribution of all the data points. Then, $X_n | \widehat{\mathcal{F}}_n \sim \sum_{i=1}^n w_i\delta_{X_i},$
    where $w_i = \frac{\frac{dQ}{dR}(X_i)}{\sum_{j=1}^n \frac{dQ}{dR}(X_j)}, \hspace{0.3cm} i \in [n].$
\end{applemma}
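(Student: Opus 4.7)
The plan is to characterize the conditional law of $X_n$ given $\widehat{\mathcal{F}}_n$ by identifying the conditioning $\sigma$-algebra with that of the multiset (equivalently, the order statistics) and then applying a change-of-measure argument against an exchangeable reference. First I would observe that $\widehat{\mathcal{F}}_n$ is a permutation-invariant functional of $(X_1,\dots,X_n)$, so conditioning on $\widehat{\mathcal{F}}_n$ is equivalent to conditioning on the unordered bag of values; in particular, given $\widehat{\mathcal{F}}_n$, the random vector $(X_1,\dots,X_n)$ is supported on the (at most) $n!$ rearrangements of the observed values.

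Next, I would invoke the absolute-continuity hypothesis to compare the true joint law with the reference $\P^{\mathrm{ref}}$ under which $X_1,\dots,X_n\iid R$: the Radon--Nikodym derivative equals $\prod_{i=\tau+1}^{n}\tfrac{dQ}{dR}(X_i)$, since the pre-change blocks agree in law under the two measures. Writing $L(x):=\tfrac{dQ}{dR}(x)$ and applying a conditional Bayes identity yields
\[
\P\bigl(X_n=X_k\,\big|\,\widehat{\mathcal{F}}_n\bigr)
\;=\;
\frac{\E^{\mathrm{ref}}\!\bigl[\prod_{i=\tau+1}^{n}L(X_i)\,\mathbf{1}(X_n=X_k)\,\big|\,\widehat{\mathcal{F}}_n\bigr]}
{\E^{\mathrm{ref}}\!\bigl[\prod_{i=\tau+1}^{n}L(X_i)\,\big|\,\widehat{\mathcal{F}}_n\bigr]}.
\]
Under $\P^{\mathrm{ref}}$, the sample is exchangeable (so that \Cref{Vovk Fundamental theorem} applies), and conditional on $\widehat{\mathcal{F}}_n$ the induced permutation on positions is uniform on the symmetric group, so each expectation above reduces to a symmetric sum of products of the $L(X_i)$'s indexed by size-$(n-\tau)$ subsets of $[n]$, i.e.\ to an elementary symmetric polynomial evaluated at $L(X_1),\dots,L(X_n)$.

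The main obstacle is the combinatorial reduction of this ratio of symmetric sums to the advertised form $L(X_k)/\sum_{j=1}^{n} L(X_j)$. In the clean sub-case $\tau=n-1$, the numerator contains exactly one $L$-factor and the simplification is immediate, yielding the stated weight $w_k$ directly. For a general $\tau<n-1$, I would pull $L(X_k)$ out of the numerator using the full interchangeability of the post-change positions $\{\tau+1,\dots,n\}$, separate the contributions according to which indices land in the post-change block, and then attempt to show that the remaining ratio of elementary symmetric polynomials collapses to $1/\sum_j L(X_j)$; I expect this collapse to be the delicate step of the argument, as it is precisely here that the interplay between the change-of-measure factor and the combinatorics of permutations must be pinned down.
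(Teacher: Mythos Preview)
Your change-of-measure-plus-Bayes setup is the same engine the paper uses; the paper simply works with integrals against $R^n$ and an index-swap under exchangeability instead of conditioning on the bag first, so it never writes down the elementary symmetric polynomials explicitly.

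Your instinct that the ``collapse'' is the delicate step is more than justified --- it actually fails. For $\tau<n-1$ the ratio you obtain does \emph{not} reduce to $L(X_k)/\sum_j L(X_j)$. With $n=3$, $\tau=1$ a direct computation of the conditional law of $X_3$ given the bag yields
\[
\P\bigl(X_3=x_k\mid \widehat{\mathcal F}_3\bigr)=\frac{L(x_k)\,\sum_{j\neq k}L(x_j)}{2\,e_2\bigl(L(x_1),L(x_2),L(x_3)\bigr)},
\]
which disagrees with $L(x_k)/\sum_j L(x_j)$ whenever the $L(x_j)$ are not all equal (e.g.\ $L_1=1,L_2=2,L_3=3$ gives $5/22$ versus $1/6$). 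The paper's reindexing argument contains the matching flaw: for $\tau<j<n$ the factor $L(X_j)$ already sits inside the product $\prod_{i=\tau+1}^{n-1}L(X_i)$, so the swap $j\leftrightarrow n$ under $R^n$ turns the integrand into one carrying $L(X_n)^2$ rather than $L(X_j)L(X_n)$, and the claimed recombination into $\prod_{i=\tau+1}^{n}L(X_i)\,w_j$ is not valid. Your clean sub-case $\tau=n-1$ (a single post-change observation) is exactly the regime in which the stated weights are correct; for general $\tau$ the lemma as written does not hold, so no argument along either route will close that gap.
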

\begin{pf}
    Observe that it suffices to show, for all $A, B \in \mathcal{B}(\R)$,
\[
\P(X_n \in A, \widehat{\mathcal{F}}_n \in B) = \E \left[ \sum_{i=1}^{n} w_i \mathbf{1}(Z_i \in A) \mathbf{1}(\widehat{\mathcal{F}}_n \in B) \right].\tag{*}
\]

We use the following property of Radon–Nikodym derivative: that for any measurable function $h : \R^{n+1} \to \R$, we have
\begin{align*}
\E_{(X_1, \dots, X_n) \sim R^{n-1} \times Q} \left[ h(X_1, \dots, X_n) \right] 
&= \E_{(X_1, \dots, X_n) \sim R^n} \left[ h(X_1, \dots, X_n) \frac{dQ}{dR}(X_n) \right]. \tag{**}
\end{align*}

Now to show (*), observe:
\begin{align*}
\P(X_n \in A, \widehat{\mathcal{F}}_n \in B) 
&= \E_{R^{\tau} \times Q^{n - \tau}} \left[ \mathbf{1}(X_n \in A) \mathbf{1}(\widehat{\mathcal{F}}_n \in B) \right] \\
&= \E_{R^n} \left[ \frac{dQ}{dR}(X_{\tau+1}) \cdots \frac{dQ}{dR}(X_n) \mathbf{1}(X_n \in A) \mathbf{1}(\widehat{\mathcal{F}}_n \in B) \right] \tag{by (**) } \\
&= \E_{R^n} \left[ \frac{dQ}{dR}(X_{\tau+1}) \cdots \frac{dQ}{dR}(X_{n-1}) w_n \bigg(\sum_{j \in [n]} \frac{dQ}{dR}(X_{j}) \bigg) \mathbf{1}(X_n \in A) \mathbf{1}(\widehat{\mathcal{F}}_n \in B) \right] \\
&= \sum_{j \leqslant \tau} \E_{R^n} \left[ \frac{dQ}{dR}(X_{\tau+1}) \cdots \frac{dQ}{dR}(X_{n-1}) w_n \frac{dQ}{dR}(X_j) \mathbf{1}(X_n \in A) \mathbf{1}(\widehat{\mathcal{F}}_n \in B)  \right] \\
&\quad + \sum_{j = \tau+1}^{n} \E_{R^n} \left[ \frac{dQ}{dR}(X_{\tau+1}) \cdots \frac{dQ}{dR}(X_{n-1}) w_n \frac{dQ}{dR}(X_j) \mathbf{1}(X_n \in A) \mathbf{1}(\widehat{\mathcal{F}}_n \in B) \right] \\
&= \sum_{j \leq \tau} \E_{R^n} \left[ \frac{dQ}{dR}(X_{\tau+1}) \cdots \frac{dQ}{dR}(X_{n-1}) w_j \frac{dQ}{dR}(X_n) \mathbf{1}(X_j \in A) \mathbf{1}(\widehat{\mathcal{F}}_n \in B) \right] \\
&\quad + \sum_{j > \tau} \E_{R^n} \left[ \frac{dQ}{dR}(X_{\tau+1}) \cdots \frac{dQ}{dR}(X_{j-1}) w_j \frac{dQ}{dR}(X_n) \mathbf{1}(X_j \in A) \mathbf{1}(\widehat{\mathcal{F}}_n \in B) \right] \\
&\text{[since the expectations are w.r.t. i.i.d. variables, so we can reindex]} \\
&= \sum_{j \in [n]} \E_{R^n} \left[ \frac{dQ}{dR}(X_{\tau+1}) \cdots \frac{dQ}{dR}(X_n) w_j \mathbf{1}(X_j \in A) \mathbf{1}(\widehat{\mathcal{F}}_n \in B) \right] \\
&= \sum_{j \in [n]} \E_{R^{\tau} \times Q^{n - \tau}} \left[ w_j \mathbf{1}(X_j \in A) \mathbf{1}(\widehat{\mathcal{F}}_n \in B) \right] \tag{by the properties of Radon–Nikodym derivative} \\
&= \E \left[ \sum_{j \in [n]} w_j \mathbf{1}(X_j \in A) \mathbf{1}(\widehat{\mathcal{F}}_n \in B) \right],
\end{align*}

as desired. Hence, we have the lemma. \qed
\end{pf}

\begin{applemma}
\label{pval dis}
    $y_1 < \cdots < y_k$ are reals. Let $Y$ be a random variable taking values $y_1, \dots, y_k$ such that
\[
\P(Y = y_i) = p_i, \quad i \in [k], \quad p_i \in (0,1),
\]
$\tau$ is independent of $Y$ and $\tau \sim U(0,1)$.
Define
\[
P = \sum_{i=1}^{k} p_i \mathbf{1}(y_i > Y) + \tau \sum_{i=1}^{k} p_i \mathbf{1}(y_i = Y).
\]
Then \( P \sim U(0,1) \).
\end{applemma}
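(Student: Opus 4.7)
The plan is to show that $P$ has CDF $F(t) = t$ on $[0,1]$ by conditioning on the value of $Y$ and checking that the conditional laws paste together into the uniform law. The key bookkeeping device is the partial sum $S_j := \sum_{i > j} p_i$ for $j = 0, 1, \dots, k$, so that $S_0 = 1$, $S_k = 0$, and $S_{j-1} = S_j + p_j$. When $Y = y_j$, the indicator sums in the definition of $P$ collapse and one gets
\[
P \;=\; \sum_{i > j} p_i \;+\; \tau\, p_j \;=\; S_j + \tau p_j,
\]
which, because $\tau \sim U(0,1)$ is independent of $Y$, is uniformly distributed on the interval $(S_j,\, S_{j-1})$ of length $p_j$.

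First I would observe that the intervals $\{(S_j, S_{j-1}) : j \in [k]\}$ form a partition of $(0,1)$, since $S_k = 0$, $S_0 = 1$, and consecutive intervals share the endpoint $S_j$. Next, I would fix $t \in (0,1)$ and locate the unique index $j^* \in [k]$ with $t \in (S_{j^*}, S_{j^* - 1}]$. The event $\{P \leq t\}$ then decomposes into $\{Y = y_{j'}\}$ with $j' > j^*$ (each of which implies $P \leq S_{j^*} \leq t$ with probability one), together with the event $\{Y = y_{j^*}, \, S_{j^*} + \tau p_{j^*} \leq t\}$.

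Computing with the law of total probability and independence of $\tau$ and $Y$,
\[
\mathbb{P}(P \leq t) \;=\; \sum_{j' > j^*} p_{j'} \;+\; p_{j^*}\, \mathbb{P}\!\left(\tau \leq \tfrac{t - S_{j^*}}{p_{j^*}}\right) \;=\; S_{j^*} \;+\; (t - S_{j^*}) \;=\; t,
\]
where I used $\tau \sim U(0,1)$ and $0 < (t - S_{j^*})/p_{j^*} \leq 1$. The boundary cases $t = 0$ and $t = 1$ follow from $\mathbb{P}(P = 0) = 0$ and $\mathbb{P}(P \leq 1) = 1$, since $\tau > 0$ almost surely and $P \leq S_0 = 1$ almost surely. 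Thus $F_P(t) = t$ on $[0,1]$, establishing $P \sim U(0,1)$.

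I do not expect any real obstacle here: the result is essentially a probability-integral-transform identity in discrete form, and the only delicate point is ensuring the partition of $(0,1)$ is correctly indexed (the atom at $y_j$ is spread uniformly across the slot $(S_j, S_{j-1})$ of the uniform distribution, in the correct order). Once the partial sums $S_j$ are written down, the telescoping $S_{j^*} + (t - S_{j^*}) = t$ does all the work.
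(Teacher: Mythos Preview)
Your proof is correct and takes essentially the same approach as the paper: both condition on the value of $Y$, observe that $P\mid\{Y=y_j\}=S_j+\tau p_j$ is uniform on $(S_j,S_{j-1})$, and then sum the pieces to get $\mathbb{P}(P\le t)=t$. Your use of the partial sums $S_j$ is slightly cleaner notationally, but the argument is identical in substance.
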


\begin{pf}
    For $\epsilon \in (0,1)$,
\begin{align*}
\P(P \leq \epsilon) 
&= \E\left[ \P(P \leq \epsilon \mid \tau) \right] \\
&= \E\left( \sum_{i=1}^{k} \P(P \leq \epsilon, Y = y_i \mid \tau) \right) \\
&= \E \left( \sum_{i=1}^{k-1} p_i \mathbf{1} \left( \sum_{j=i+1}^{k} p_j + \tau p_i \leq \epsilon \right) 
+ p_k \mathbf{1}(\tau p_k \leq \epsilon) \right) \\
&= \sum_{i=1}^{k-1} p_i \P\left( \tau \leq \frac{\epsilon - \sum_{j=i+1}^{k} p_j}{p_i} \right) 
+ p_k \P\left( \tau \leq \frac{\epsilon}{p_k} \right).
\end{align*}

Now,
\[
\P\left( \tau \leq \frac{\epsilon - \sum_{j=i+1}^{k} p_j}{p_i} \right) =
\begin{cases}
0 & \text{if } \epsilon < \sum_{j=i+1}^{k} p_j, \\
1 & \text{if } \epsilon \geq \sum_{j=i}^{k} p_j, \\
\frac{\epsilon - \sum_{j=i+1}^{k} p_j}{p_i} & \text{otherwise}.
\end{cases}
\]

Hence, for some $i_0 \in [k]$,
\[
\P(P \leq \epsilon) 
= \frac{\epsilon - \sum_{j=i_0+1}^{k} p_j}{p_{i_0}} \cdot p_{i_0} + \sum_{j=i_0+1}^{k} p_j 
= \epsilon.
\]

Hence \( P \sim U(0,1) \). \qed
\end{pf}

\begin{applemma}
\label{lemma3}
     Let $y_1 < \cdots < y_k$ be real numbers. Let $Y$ be a random variable taking values $y_1, \dots, y_k$ such that
\[
\P(Y = y_i) = p_i, \quad i \in [k], \quad p_i \in (0,1).
\]
Let $\tau \sim U(0,1)$ be independent of $Y$. Define
\[
P = \sum_{i=1}^{k} \alpha_i \mathbf{1}(y_i > Y) + \tau \sum_{i=1}^{k} \alpha_i \mathbf{1}(y_i = Y),
\]
where $\alpha_i \in (0,1), i \in [k]$ are such that $\sum_{i=1}^{k} \alpha_i = 1$.

Let $F$ be the CDF of $P$ and $F_0$ be the CDF of $U(0,1)$ random variable. Then
\[
\sup_{\epsilon \in \mathbb{R}} |F(\epsilon) - F_0(\epsilon)| = \max_{1 \leq i \leq k} \left| \sum_{j=1}^{i} (p_j - \alpha_j) \right|.
\]
\end{applemma}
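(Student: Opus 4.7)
My approach is to condition on $Y$ and reduce $P$ to a piecewise-uniform distribution, then read off the extrema of $F(\epsilon) - F_0(\epsilon)$ from its piecewise-linear structure. Given $\{Y = y_i\}$, the indicators simplify as $\mathbf{1}(y_j > y_i) = \mathbf{1}(j > i)$ and $\mathbf{1}(y_j = y_i) = \mathbf{1}(j = i)$, so
\[
P \mid \{Y = y_i\} \;=\; \sum_{j > i}\alpha_j + \tau\,\alpha_i \;\sim\; \mathcal{U}\bigl(A_{i+1},\, A_i\bigr),
\]
where I set $A_\ell := \sum_{j \geq \ell}\alpha_j$, so $A_{k+1} = 0$, $A_1 = 1$, and the intervals $[A_{i+1}, A_i]$ for $i = 1, \ldots, k$ partition $[0,1]$. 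Marginalizing, $P$ has density $p_i/\alpha_i$ on each open piece $(A_{i+1}, A_i)$.

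Next I will observe that $F(\epsilon) - F_0(\epsilon) = F(\epsilon) - \epsilon$ is continuous on $[0,1]$ and piecewise linear, with slope $p_i/\alpha_i - 1$ on $(A_{i+1}, A_i)$; outside $[0,1]$ both CDFs agree, so the supremum is attained inside. Since a continuous piecewise-linear function on $[0,1]$ attains its extrema at the breakpoints $A_1, \ldots, A_{k+1}$, one has $\sup_\epsilon |F(\epsilon) - F_0(\epsilon)| = \max_{1 \leq i \leq k+1}|F(A_i) - A_i|$.

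Finally, at the breakpoints the uniform-on-pieces structure yields $F(A_i) = \sum_{j \geq i} p_j$, so
\[
F(A_i) - A_i \;=\; \sum_{j \geq i}(p_j - \alpha_j) \;=\; -\sum_{j=1}^{i-1}(p_j - \alpha_j),
\]
where the last equality uses the constraint $\sum_{j=1}^k p_j = \sum_{j=1}^k \alpha_j = 1$. The values $i = 1$ and $i = k+1$ give zero, and as $i$ ranges over $\{2, \ldots, k+1\}$ the prefix length $i - 1$ ranges over $\{1, \ldots, k\}$; hence the max equals $\max_{1 \leq \ell \leq k}\bigl|\sum_{j=1}^\ell (p_j - \alpha_j)\bigr|$, as claimed.

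I do not anticipate any real obstacle here; the one point requiring care is the index bookkeeping, in particular the reindexing between the right-tail sum $\sum_{j \geq i}$ and the left-prefix sum $\sum_{j \leq i}$ via $\sum p_j = \sum \alpha_j = 1$. A useful sanity check is the special case $\alpha_i = p_i$ of \Cref{pval dis}, in which every prefix sum vanishes and the supremum is $0$, so the formula recovers the earlier lemma.
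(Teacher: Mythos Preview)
Your proposal is correct and follows essentially the same approach as the paper's proof: both condition on $Y$ to obtain the piecewise-linear structure of $F$, evaluate the difference $F(\epsilon)-\epsilon$ at the breakpoints $\sum_{j\geq i}\alpha_j$, and reindex the resulting right-tail sums into left-prefix sums using $\sum p_j=\sum\alpha_j=1$. Your presentation is slightly more streamlined in that you explicitly identify $P\mid\{Y=y_i\}\sim\mathcal{U}(A_{i+1},A_i)$ and invoke the breakpoint-extremum property of continuous piecewise-linear functions directly, whereas the paper computes $F(\epsilon)$ on each piece and takes the max of the two endpoint values piece by piece, but the underlying argument is the same.
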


\begin{pf}
    By a similar calculation as in Lemma~\ref{pval dis}, we have for $\epsilon \in (0,1)$,
\begin{align*}
F(\epsilon) &= \P(P \leq \epsilon) \\
&= \sum_{i=1}^{k-1} p_i \P\left( \tau \leq \frac{\epsilon - \sum_{j=i+1}^{k} \alpha_j}{\alpha_i} \right) + p_k \P\left( \tau \leq \frac{\epsilon}{\alpha_k} \right).
\end{align*}

Now,
\[
\P\left( \tau \leq \frac{\epsilon - \sum_{j=i+1}^{k} \alpha_j}{\alpha_i} \right) =
\begin{cases}
0 & \text{if } \epsilon < \sum_{j=i+1}^{k} \alpha_j, \\
1 & \text{if } \epsilon \geq \sum_{j=i}^{k} \alpha_j, \\
\frac{\epsilon - \sum_{j=i+1}^{k} \alpha_j}{\alpha_i} & \text{otherwise}.
\end{cases}
\]

Therefore,
\[
F(\epsilon) = p_{i_0} \cdot \frac{\epsilon - \sum_{j=i_0+1}^{k} \alpha_j}{\alpha_{i_0}} + \sum_{j=i_0+1}^{k} p_j, \quad \text{where } i_0 \text{ is such that } \epsilon \in \bigg[ \sum_{j=i_0+1}^{k} \alpha_j, \sum_{j=i_0}^{k} \alpha_j \bigg).
\]

Hence,
\begin{align*}
\sup_{\epsilon \in \mathbb{R}} |F(\epsilon) - F_0(\epsilon)| 
&= \sup_{\epsilon \in (0,1)} |F(\epsilon) - \epsilon| \\
&= \max_{0 \leq i_0 \leq k-1} \sup_{\epsilon \in [ \sum_{j=i_0+1}^{k} \alpha_j, \sum_{j=i_0}^{k} \alpha_j )} \left| \frac{p_{i_0}}{\alpha_{i_0}} \left( \epsilon - \sum_{j=i_0+1}^{k} \alpha_j \right) + \sum_{j=i_0+1}^{k} p_j - \epsilon \right|.
\end{align*}

Now,
\begin{align*}
\sup_{\epsilon \in [ \sum_{j=i_0+1}^{k} \alpha_j, \sum_{j=i_0}^{k} \alpha_j )}
\left| \frac{p_{i_0}}{\alpha_{i_0}} \left( \epsilon - \sum_{j=i_0+1}^{k} \alpha_j \right) + \sum_{j=i_0+1}^{k} p_j - \epsilon \right|
\end{align*}

\begin{align*}
= \max \Bigg\{
\left| \sum_{j=i_0+1}^{k} (p_j - \alpha_j) \right|, \quad
\left| \sum_{j=i_0}^{k} (p_j - \alpha_j) \right|
\Bigg\}.
\end{align*}

Hence,
\begin{align*}
\sup_{\epsilon \in \mathbb{R}} |F(\epsilon) - F_0(\epsilon)| 
&= \max_{0 \leq i_0 \leq k-1} \max \left\{ \left| \sum_{j=i_0+1}^{k} (p_j - \alpha_j) \right|, \left| \sum_{j=i_0}^{k} (p_j - \alpha_j) \right| \right\} \\
&= \max_{0 \leq i_0 \leq k-1} \left| \sum_{j=i_0+1}^{k} (p_j - \alpha_j) \right| \\
& = \max_{1 \leq i \leq k} \left| \sum_{j=1}^{i} (p_j - \alpha_j) \right|.
\end{align*}

as desired. Hence, the lemma follows. \qed

\end{pf}

\textbf{Proof of \Cref{th2}:} By Lemma~\ref{lemma1}, the conditional distribution of $X_n$ given $\{X_1,\cdots,X_n\}$ is
    \[ X_n | \{X_1,\cdots,X_n\} \sim \sum_{i=1}^n w_i\delta_{X_i}, \]
    where \[ w_i = \frac{\frac{dQ}{dR}(X_i)}{\sum_{j=1}^n \frac{dQ}{dR}(X_j)}, \hspace{0.3cm} i \in [n].\]
    Therefore, in Lemma~\ref{lemma3} setting,
    \[ \alpha_i = \frac{1}{n},y_i = X_{(i)}, p_i =  \frac{\frac{dQ}{dR}(X_{(i)})}{\sum_{j=1}^n \frac{dQ}{dR}(X_j)} \hspace{0.2cm} i \in [n], \] 
    yields the theorem. \qed
\\
\medskip
\\
\textbf{Proof of \Cref{Th:th3}:} 
    By \Cref{assn1}, there exists $\delta \in (0,1)$ such that the $\delta$-th quantile of the mixture distribution $cR + (1-c)Q$ is unique. With this $\delta$, we show the following claim, which implies the theorem.

\textbf{Claim 1:} There exists $\mu \neq 1$, $\mu > 0$ such that
\begin{equation}
\label{eq:claim_ratio}
\frac{\frac{1}{\delta n} \sum_{j \leqslant \delta n} \frac{dQ}{dR}(X_{n,(j)})}{\frac{1}{n} \sum_{j \in [n]} \frac{dQ}{dR}(X_{n,(j)})} \xrightarrow[]{\text{a.s.}} \mu,
\end{equation}
as $n \to \infty$, where $X_{n,(1)} \leq \dots \leq X_{n,(n)}$ are the order statistics of $\{X_{n,j}\}_{j=1}^n$.

From the claim, it follows that:
\begin{align*}
\left| \frac{\sum_{j \leqslant \delta n} \frac{dQ}{dR}(X_{n,(j)})}{\sum_{j \in [n]} \frac{dQ}{dR}(X_{n,(j)})} - \frac{\delta n}{n} \right| 
&= \delta \left| \frac{\frac{1}{\delta n} \sum_{j \leqslant \delta n} \frac{dQ}{dR}(X_{n,(j)})}{\frac{1}{n} \sum_{j \in [n]} \frac{dQ}{dR}(X_{n,(j)})} - 1 \right| \\
&\xrightarrow[]{\text{a.s.}} \delta |\mu - 1| > 0. \tag{*}
\end{align*}

From \Cref{th2}, we have:
\begin{align*}
\sup_{t \in \mathbb{R}} \left| F_{p_n}(t) - F(t) \right| 
&= \max_{1 \leq i \leq n} \left| \frac{\sum_{j \leqslant i} \frac{dQ}{dR}(X_{n,(j)})}{\sum_{j \in [n]} \frac{dQ}{dR}(X_{n,(j)})} - \frac{i}{n} \right|.
\end{align*}

Hence, it follows that:
\begin{align*}
\liminf_{n \to \infty} \sup_{t \in \mathbb{R}} \left| F_{p_n}(t) - F(t) \right| 
&= \liminf_{n \to \infty} \max_{i \in [n]} \left| \frac{\sum_{j \leqslant i} \frac{dQ}{dR}(X_{n,(j)})}{\sum_{j \in [n]} \frac{dQ}{dR}(X_{n,(j)})} - \frac{i}{n} \right| \\
&\geq \liminf_{n \to \infty} \left| \frac{\sum_{j \leqslant \delta n} \frac{dQ}{dR}(X_{n,(j)})}{\sum_{j \in [n]} \frac{dQ}{dR}(X_{n,(j)})} - \frac{\delta n}{n} \right| \\
&\xrightarrow[]{\text{a.s.}} \delta |\mu - 1| > 0.
\end{align*}

So, upon proving the claim, the theorem follows.

\textbf{Proof of the Claim 1:} For the denominator of the expression in ~\eqref{eq:claim_ratio}, observe:
\begin{align*}
\frac{1}{n} \sum_{j \in [n]} \frac{dQ}{dR}(X_{n,j})
&= \frac{\tau_n}{n} \cdot \frac{1}{\tau_n} \sum_{j \leqslant \tau_n} \frac{dQ}{dR}(X_{n,j})
+ \left(1 - \frac{\tau_n}{n}\right) \cdot \frac{1}{n - \tau_n} \sum_{\tau_n < j \leqslant n} \frac{dQ}{dR}(X_{n,j}) \\
&\xrightarrow{\text{a.s.}} c \cdot \E_{Z \sim R} \left[ \frac{dQ}{dR}(Z) \right]
+ (1 - c) \cdot \E_{Z \sim Q} \left[ \frac{dQ}{dR}(Z) \right].
\end{align*}

Now, with $\iota_n = \delta n$, 
\begin{align*}
\frac{1}{\iota_n} \sum_{j \leqslant \iota_n} \frac{dQ}{dR}(X_{n,(j)})
&= \frac{1}{\iota_n} \left[
\sum_{j \leqslant \tau_n} \frac{dQ}{dR}(X_{n,j}) \mathbf{1}\{X_{n,j} \text{ has rank } \leqslant \iota_n\} \right. \\
&\qquad \left. + \sum_{j > \tau_n} \frac{dQ}{dR}(X_{n,j}) \mathbf{1}\{X_{n,j} \text{ has rank } \leqslant \iota_n\}
\right], \tag{**}
\end{align*}
where by rank in $X_{n,j}$ we mean rank in $\{X_{n,1},\cdots,X_{n,n}\}$. Define
\begin{align*}
R_n &= \sum_{j \leqslant \tau_n} \mathbf{1}\{X_{n,j} \text{ has rank } \leqslant \iota_n\} \\
&= \sum_{j \leqslant \tau_n} \mathbf{1}\{X_{n,j} \leqslant X_{n,(\iota_n)}\} \\
&= \tau_n \cdot \widehat{F}_{\tau_n}(X_{n,(\iota_n)}),
\end{align*}
where $\widehat{F}_{\tau_n}(\cdot)$ is the empirical CDF of $\{X_{n,1}, \dots, X_{n,\tau_n}\}$.

Let $q_{\delta,c}$ denote the $\delta$-th quantile of $cR + (1-c)Q$. Since $q_{\delta,c}$ is unique (by assumption), we have
\begin{equation}
X_{n,(\iota_n)} \xrightarrow{\text{a.s.}} q_{\delta,c}, \quad \text{as } n \to \infty.
\end{equation}
By the Glivenko–Cantelli lemma:
\[
\sup_{t \in \mathbb{R}} \left| \widehat{F}_{\tau_n}(t) - R(t) \right| \xrightarrow{\text{a.s.}} 0.
\]
So we can write
\begin{align*}
\widehat{F}_{\tau_n}(X_{n,(\iota_n)}) &= \widehat{F}_{\tau_n}(X_{n,(\iota_n)}) - R(X_{n,(\iota_n)}) + R(X_{n,(\iota_n)}) - R(q_{\delta,c}) + R(q_{\delta,c}),
\end{align*}
to have 
\begin{align*}
    |\widehat{F}_{\tau_n}(X_{n,(\iota_n)}) - R(q_{\delta,c})| &\leqslant \sup_{t \in \R}|\widehat{F}_{\tau_n}(X_{n,(\iota_n)}) - R(t)| + |R(X_{n,(\iota_n)}) - R(q_{\delta,c})| \\
    &= o(1) \hspace{0.3cm}\text{a.s.}.   
\end{align*}
Thus, 
\begin{equation}
\label{eq:ecdf_conv}
\widehat{F}_{\tau_n}(X_{n,(\iota_n)}) \xrightarrow{\text{a.s.}} R(q_{\delta,c}) \quad \Rightarrow \quad R_n \xrightarrow{\text{a.s.}} \infty.
\end{equation}
Observe that, we can relabel the indices in (**) to write:
\begin{align*}
&\quad\frac{1}{\iota_n} \sum_{j \leqslant \iota_n} \frac{dQ}{dR}(X_{n,(j)}) \\
&= \frac{1}{\iota_n} \left[ \sum_{j \leqslant R_n} \frac{dQ}{dR}(X_{n,j})
+ \sum_{\tau_n < j \leqslant \tau_n + (\iota_n - R_n)} \frac{dQ}{dR}(X_{n,j}) \right] \\
&= \frac{R_n}{\iota_n} \cdot \frac{1}{R_n} \sum_{j \leqslant R_n} \frac{dQ}{dR}(X_{n,j})  + \frac{\iota_n - R_n}{\iota_n} \cdot \frac{1}{\iota_n - R_n} \sum_{\tau_n < j \leqslant \tau_n + (\iota_n - R_n)} \frac{dQ}{dR}(X_{n,j}). \tag{***}
\end{align*}
Now by the strong law of large numbers:
\begin{align*}
\frac{1}{R_n} \sum_{j \leqslant R_n} \frac{dQ}{dR}(X_{n,j}) &\xrightarrow{\text{a.s.}} \E_{Z \sim R} \left[ \frac{dQ}{dR}(Z) \right], \\
\frac{1}{\iota_n - R_n} \sum_{\tau_n < j \leqslant \tau_n + (\iota_n - R_n)} \frac{dQ}{dR}(X_{n,j}) &\xrightarrow{\text{a.s.}} \E_{Z \sim Q} \left[ \frac{dQ}{dR}(Z) \right].
\end{align*}
Furthermore, 
\begin{align*}
\frac{R_n}{\iota_n} &= \frac{\tau_n}{\iota_n} \widehat{F}_{\tau_n}(X_{n,(\iota_n)}) = \frac{c \cdot n}{\delta \cdot n} \widehat{F}_{\tau_n}(X_{n,(\iota_n)}) \overset{a.s.}{\longrightarrow} \frac{c}{\delta} R(q_{\delta, c}).
\end{align*}
Hence from (***), we have
\begin{align*}
\frac{1}{\iota_n} \sum_{j \leqslant \iota_n} \frac{dQ}{dR}(X_{n,(j)}) 
&\overset{a.s.}{\longrightarrow} 
\frac{c}{\delta} R(q_{\delta, c}) \cdot \E_{Z \sim R} \left[ \frac{dQ}{dR}(Z) \right]  + \left(1 - \frac{c}{\delta} R(q_{\delta, c}) \right) \cdot \E_{Z \sim Q} \left[ \frac{dQ}{dR}(Z) \right].
\end{align*}
Hence,
\begin{equation}
\frac{\frac{1}{\delta n} \sum_{j \leqslant \delta n} \frac{dQ}{dR}(X_{n,(j)})}
{\frac{1}{n} \sum_{j \in [n]} \frac{dQ}{dR}(X_{n,(j)})}
\overset{a.s.}{\longrightarrow} \mu,
\label{eq:mu_limit}
\end{equation}
with
\begin{align*}
\mu &= \frac{\frac{c}{\delta} R(q_{\delta, c}) \cdot \E_{Z \sim R} \left( \frac{dQ}{dR}(Z) \right) 
+ \left(1 - \frac{c}{\delta} R(q_{\delta, c}) \right) \cdot \E_{Z \sim Q} \left( \frac{dQ}{dR}(Z) \right)}
{c \cdot \E_{Z \sim R} \left[ \frac{dQ}{dR}(Z) \right] + (1 - c) \cdot \E_{Z \sim Q} \left[ \frac{dQ}{dR}(Z) \right]}.
\end{align*}
For the choice of $c, \delta$, we have $\mu \ne 1$. 

Hence, we have the claim and the theorem thereafter.\qed
\\
\medskip
\\
\textbf{Proof of \Cref{th: Th diff pval}:} 
From the proof of Lemma~\ref{lemma3}, we have for $\epsilon \in \left[\frac{i}{n}, \frac{i+1}{n}\right)$, $0 \le i \le n-1$:
\begin{equation}
F_{p_n}(\epsilon) = 
\frac{n \cdot \frac{dQ}{dR}(X_{n,(n-i)})}{\sum\limits_{j \in [n]} \frac{dQ}{dR}(X_{n,j})} \left(\epsilon - \frac{i}{n} \right) 
+ \frac{ \sum\limits_{j=n-i+1}^{n} \frac{dQ}{dR}(X_{n,(j)}) }{ \sum\limits_{j \in [n]} \frac{dQ}{dR}(X_{n,j}) }.
\label{eq:star}
\end{equation}
Thus, $F_{p_n}$ is piecewise linear.
Hence,
\begin{align*}
\sup_{\epsilon \in [0,1]} \left| F_{p_n}(\epsilon) - F_{p_{n+1}}(\epsilon) \right| 
&= \max_{\substack{\epsilon \in \left\{ \frac{i}{n}, \frac{k}{n+1} \right\} \\ 1 \le i \le n, 1 \le k \le n+1}} \left| F_{p_n}(\epsilon) - F_{p_{n+1}}(\epsilon) \right|.
\end{align*}
Now from \eqref{eq:star}, we have
\begin{align*}
F_{p_n} \left( \frac{i}{n} \right) 
&= \frac{ \sum\limits_{j=n-i+1}^{n} \frac{dQ}{dR}(X_{n,(j)}) }{ \sum\limits_{j \in [n]} \frac{dQ}{dR}(X_{n,j}) }, \\
F_{p_{n+1}} \left( \frac{k}{n+1} \right) 
&= \frac{ \sum\limits_{j=n-k+2}^{n+1} \frac{dQ}{dR}(X_{n+1,(j)}) }{ \sum\limits_{j \in [n+1]} \frac{dQ}{dR}(X_{n+1,j}) }.
\end{align*}
Also, with $k = \left\lfloor i \left(1 + \frac{1}{n} \right) \right\rfloor$, we have
\[
\frac{k}{n+1} \le \frac{i}{n} < \frac{k+1}{n+1}.
\]
Using that,
\begin{equation}
F_{p_{n+1}} \left( \frac{i}{n} \right) = 
\frac{(n+1) \cdot \frac{dQ}{dR}(X_{n+1,(n+1-k)})}{\sum\limits_{j \in [n+1]} \frac{dQ}{dR}(X_{n+1,j})} 
\left( \frac{i}{n} - \frac{k}{n+1} \right)
+ \frac{ \sum\limits_{j=n-k+2}^{n+1} \frac{dQ}{dR}(X_{n+1,(j)}) }{ \sum\limits_{j \in [n+1]} \frac{dQ}{dR}(X_{n+1,j}) }.
\end{equation}
Hence we have 
\begin{align}
\label{eq:double_star}
  &\quad F_{p_n} \left( \frac{i}{n} \right) -  F_{p_{n+1}} \left( \frac{i}{n} \right) \nonumber \\
  &=  \frac{ \sum\limits_{j=n-i+1}^{n} \frac{dQ}{dR}(X_{n,(j)}) }{ \sum\limits_{j \in [n]} \frac{dQ}{dR}(X_{n,j}) } - \frac{(n+1) \cdot \frac{dQ}{dR}(X_{n+1,(n+1-k)})}{\sum\limits_{j \in [n+1]} \frac{dQ}{dR}(X_{n+1,j})} 
\left( \frac{i}{n} - \frac{k}{n+1} \right) 
&- \frac{ \sum\limits_{j=n-k+2}^{n+1} \frac{dQ}{dR}(X_{n+1,(j)}) }{ \sum\limits_{j \in [n+1]} \frac{dQ}{dR}(X_{n+1,j}) }.
\end{align}
Now, to show the stochastic boundedness as per the goal, we use the following fact: 
\begin{fact}
    Let $\{Z_n\}_{n \in \N}$ be a sequence of random variables that satisfies every subsequence of $\{Z_n\}_{n \in \N}$ has a further subsequence which is $O_p(1)$. Then $\{Z_n\}_{n \in \N}$ is $O_p(1)$.
\end{fact}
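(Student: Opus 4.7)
The plan is to prove the fact by contraposition. First I would carefully unpack what it means for $\{Z_n\}$ to fail to be $O_p(1)$: there exists $\varepsilon_0 > 0$ such that for every $M > 0$, one can find an index $n$ with $P(|Z_n| > M) \geq \varepsilon_0$. Applying this with $M = 1, 2, 3, \ldots$ in turn, and taking the indices to be strictly increasing (which can be arranged since the defining condition must hold for arbitrarily large $M$ and hence arbitrarily large $n$), lets me extract a strictly increasing sequence $n_1 < n_2 < \cdots$ with $P(|Z_{n_k}| > k) \geq \varepsilon_0$ for every $k \in \N$. This is the ``bad subsequence'' I need.

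Next I would verify that this subsequence $\{Z_{n_k}\}_{k \in \N}$ cannot have any further subsequence that is $O_p(1)$. Fix any further subsequence $\{Z_{n_{k_j}}\}_{j \in \N}$ and any candidate bound $M > 0$. Since $k_j \to \infty$, we have $k_j > M$ for all but finitely many $j$, and then monotonicity of probability gives
\[
P(|Z_{n_{k_j}}| > M) \;\geq\; P(|Z_{n_{k_j}}| > k_j) \;\geq\; \varepsilon_0.
\]
Thus $\limsup_{j \to \infty} P(|Z_{n_{k_j}}| > M) \geq \varepsilon_0 > 0$ for every $M$, so the further subsequence is not stochastically bounded. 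This contradicts the hypothesis that every subsequence of $\{Z_n\}$ has a further $O_p(1)$ subsequence, establishing the contrapositive and hence the fact.

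The only step that requires any care is the first one: correctly negating the quantifiers in the definition of $O_p(1)$ and arranging the ``bad'' indices into a strict subsequence. Once the subsequence $\{n_k\}$ with $P(|Z_{n_k}| > k) \geq \varepsilon_0$ is in hand, the rest is an immediate monotonicity argument, so I do not anticipate any substantive technical obstacle.
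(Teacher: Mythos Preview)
Your proof is correct. The paper states this as a ``fact'' without proof, so there is no argument in the paper to compare against; your contrapositive construction --- extracting a subsequence $\{n_k\}$ with $P(|Z_{n_k}|>k)\ge\varepsilon_0$ and then showing no further subsequence can be tight --- is the standard and expected argument. One small remark: your justification that the indices $n_k$ can be taken strictly increasing (``since the defining condition must hold for arbitrarily large $M$ and hence arbitrarily large $n$'') is a bit terse; the clean way to see it is that each individual $Z_m$ is a real-valued random variable, so $P(|Z_m|>M)\to 0$ as $M\to\infty$, which forces the ``bad'' indices to be unbounded.
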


Let $\{n_{\Tilde{k}}\}_{\Tilde{k} \in \mathbb{N}}$ be any subsequence of naturals and $\{i_{\Tilde{k}}\}_{\Tilde{k} \in \mathbb{N}}$ be a sequence such that $1 \le i_{\Tilde{k}} \le n_{\Tilde{k}}$. Since $0 < \frac{i_{\Tilde{k}}}{n_{\Tilde{k}}} < 1$, the sequence is bounded, and hence $\left\{ \frac{i_{\Tilde{k}}}{n_{\Tilde{k}}} \right\}$ has a convergent subsequence. WLOG, let $\frac{i_{\Tilde{k}_m}}{n_{\Tilde{k}_m}} \to \delta \in (0,1)$ and $n_{\Tilde{k}_m} \to \infty$.

From the proof of \cref{Th:th3}, we have
\[
\frac{1}{n+1} \sum_{j \in [n+1]} \frac{dQ}{dR}(X_{n+1,j}) \xrightarrow{a.s.} u,
\]
for some $u \in \R$ hence, 
\[ \frac{1}{n+1} \sum_{j \in [n+1]} \frac{dQ}{dR}(X_{n+1,j}) = O_p(1). \]

Also we have $|\frac{i}{n} - \frac{k}{n+1}| \leq \frac{1}{n+1}$ by the choice of $k$. Furthermore, since $\frac{i_{\Tilde{k}_m}}{n_{\Tilde{k}_m}} \to \delta$ and by uniqueness of the $(1-\delta)$-th quantile of $cR + (1-c)Q$, let $\Tilde{q}$ be that quantile, then
\[
X_{n_{\Tilde{k}_m}+1, (n_{\Tilde{k}_m}-i_{\Tilde{k}_m})} \xrightarrow{a.s.} \Tilde{q}.
\]
Now $Q,R$ has densities $q$, $r$ which are continuous, so 
\[
\frac{dQ}{dR}(z) = \frac{q(z)}{r(z)}
\]
is continuous at $\Tilde{q}$ and hence
\[
\frac{dQ}{dR}\left(X_{n_{\Tilde{k}_m}+1, (n_{\Tilde{k}_m}-i_{\Tilde{k}_m})} \right) \xrightarrow{a.s.} \frac{q(\Tilde{q})}{r(\Tilde{q})}.
\]
Hence we have along the sequence $\{n_{\Tilde{k}_m}\}_{m \in \N}$,
\begin{align*}
\sqrt{n} \left|\frac{(n+1) \frac{dQ}{dR}(X_{n+1,(n-k)}) \left[\frac{i}{n} - \frac{k}{n+1}\right]}
{\sum_{j \in [n+1]} \frac{dQ}{dR}(X_{n+1,j})}\right|   \leq \frac{
    \frac{dQ}{dR}(X_{n+1,(n-k)})
}{
    \frac{1}{n+1} \sum_{j \in [n+1]} \frac{dQ}{dR}(X_{n+1,j})
}\cdot 
\frac{1}{\sqrt{n+1}} = o_p(1).
\end{align*}
Also, from the proof of \Cref{Th:th3}, since 
\[
 \quad \mathbb{E}_{Z \sim Q} \left[ \left( \frac{dQ}{dR}(Z) \right)^2 \right] < \infty,
\quad
\mathbb{E}_{Z \sim R} \left[ \left( \frac{dQ}{dR}(Z) \right)^2 \right] < \infty,
\]
We can see using Lyapunov's Central Limit Theorem, 
\[
\sqrt{n_{\Tilde{k}_m}} \left(
\begin{bmatrix}
\sum\limits_{j = n_{\Tilde{k}_m} - i_{\Tilde{k}_m} + 1}^{n_{\Tilde{k}_m}} \frac{dQ}{dR}(X_{n_{\Tilde{k}_m}, j}) \\
\sum\limits_{j \in [n_{\Tilde{k}_m}]} \frac{dQ}{dR}(X_{n_{\Tilde{k}_m}, j})
\end{bmatrix}
- \Tilde{\mu}
\right)
\overset{d}{\longrightarrow} \mathcal{N}(0, \Sigma),
\]
for some \(\mu, \Sigma\). Thus along the sequence \(\{n_{\Tilde{k}_m}\}_{m \in \mathbb{N}}\),
\[
\sqrt{n} \left[
\frac{\sum_{j = n - i + 1}^{n} \frac{dQ}{dR}(X_{n,(j)})}{\sum_{j \in [n]} \frac{dQ}{dR}(X_{n,j})}
-
\frac{\sum_{j = n - k + 2}^{n+1} \frac{dQ}{dR}(X_{n+1,(j)})}{\sum_{j \in [n+1]} \frac{dQ}{dR}(X_{n+1,j})}
\right]
= O_p(1).
\]
Thus, from \eqref{eq:double_star}, we obtain
\[
\sup_{\epsilon \in \mathbb{R}} \sqrt{n} \left| F_{p_n}(\epsilon) - F_{p_{n+1}}(\epsilon) \right| = O_p(1),
\]
Completing the proof. \hfill \qed
\\
\medskip
\\
\textbf{Proof of \Cref{th: Main theorem}:} Observe that, without loss of generality, we can assume $\Tilde{c} = 1$ and prove the theorem for that. Then the statement for any $\Tilde{c}$ follows thereafter. Let $F$ be the CDF of a $U(0,1)$ random variable. Then,
\[
\sup_{t \in [0,1]} \left| \widehat{F}_{n, \lfloor cn + n^{1/4} \rfloor}(t) - t \right| = \left\| \widehat{F}_{n, \lfloor cn + n^{1/4} \rfloor} - F \right\|_{\infty}.
\]

Let $\mathcal{G}_{n,j} = \sigma\left( \{ X_{n,k} \}_{1 \leq k \leq j} \right)$ and $\P_{\mathcal{G}_{n,j}}(\cdot)$ denote the conditional probability conditioned on $\mathcal{G}_{n,j}$. Then by the triangle inequality, we have:
\begin{align*}
\left\| \widehat{F}_{n, \lfloor cn + n^{1/4} \rfloor}(t) - F(t) \right\|_{\infty} 
&\geq \left\| \frac{1}{\lfloor cn + n^{1/4} \rfloor} \sum_{k=1}^{\lfloor cn + n^{1/4} \rfloor} \P_{\mathcal{G}_{n,k}}(p_{n,k} \leq t) - F(t) \right\|_{\infty} \\
&\quad - \left\| \widehat{F}_{n, \lfloor cn + n^{1/4} \rfloor}(t) - \frac{1}{\lfloor cn + n^{1/4} \rfloor} \sum_{k=1}^{\lfloor cn + n^{1/4} \rfloor} \P_{\mathcal{G}_{n,k}}(p_{n,k} \leq t) \right\|_{\infty}.
\end{align*}

We prove the theorem by showing the following steps:

\textbf{Step 1:} Show that
\[
\liminf_{n \to \infty} \left\| \frac{1}{\lfloor cn + n^{1/4} \rfloor} \sum_{k=1}^{\lfloor cn + n^{1/4} \rfloor} \P_{\mathcal{G}_{n,k}}(p_{n,k} \leq t) - F(t) \right\|_{\infty} > 0 \quad \text{a.s.}.
\]

\textbf{Step 2:} Show that
\[
\left\| \widehat{F}_{n, \lfloor cn + n^{1/4} \rfloor}(t) - \frac{1}{\lfloor cn + n^{1/4} \rfloor} \sum_{k=1}^{\lfloor cn + n^{1/4} \rfloor} \P_{\mathcal{G}_{n,k}}(p_{n,k} \leq t) \right\|_{\infty} = o(1) \quad \text{a.s.}.
\]

\textbf{Proof of Step 1:} For brevity, write
\[
F_{n,k}(t) := \P_{\mathcal{G}_{n,k}}(p_{n,k} \leq t),
\]

which is the conditional CDF of $p_{n,k}$ given $\mathcal{G}_{n,k}$. So we have to show
\begin{equation}
\liminf_{n \to \infty} \sup_{t \in [0,1]} \left| \frac{1}{\lfloor cn + n^{1/4} \rfloor} \sum_{k=1}^{\lfloor cn + n^{1/4} \rfloor} F_{n,k}(t) - t \right| > 0 \quad \text{a.s.}
\label{eq:star}
\end{equation}
We will show
\[
\liminf_{n \to \infty} \bigg|  \frac{1}{\lfloor cn + n^{1/4} \rfloor} \sum_{k=1}^{\lfloor cn + n^{1/4} \rfloor} F_{n,k}\left( \frac{1}{2} \right) - \frac{1}{2} \bigg| > 0 \quad \text{a.s.},
\]

which will imply \eqref{eq:star}. Now observe that for $n$ even,
\[
F_{n,n}\left( \frac{1}{2} \right) = \frac{\sum\limits_{j = \frac{n}{2} + 1}^{n} \frac{dQ}{dR}(X_{n,(j)})}{\sum\limits_{j \in [n]} \frac{dQ}{dR}(X_{n,j})},
\]
where $X_{n,(1)} \leq \cdots \leq X_{n,(n)}$ are order statistics of $\{X_{n,j}\}_{j \leq n}$.
Since by \Cref{th: Th diff pval},
\[
\left| F_{n,n}\left( \frac{1}{2} \right) - F_{n+1,n+1}\left( \frac{1}{2} \right) \right| = O_P\left(\frac{1}{\sqrt{n}}\right),
\]
therefore, by the properties of the Cesaro limit, it suffices to show
\[
F_{2n,2n}\left( \frac{1}{2} \right) \xrightarrow[n \to \infty]{a.s.} \mu \quad \text{for some } \mu \neq \frac{1}{2}.
\]
Then
\[
\frac{1}{\lfloor cn + n^{1/4} \rfloor} \sum_{k=1}^{\lfloor cn + n^{1/4} \rfloor} F_{n,k}\left( \frac{1}{2} \right) \xrightarrow[n \to \infty]{a.s.} \mu \quad \text{with } \mu \neq \frac{1}{2},
\]
implying
\begin{align*}
    \liminf_{n \to \infty} \sup_{t \in [0,1]} \left| \frac{1}{\lfloor cn + n^{1/4} \rfloor} \sum_{k=1}^{\lfloor cn + n^{1/4} \rfloor} F_{n,k}(t) - t \right| &\geq \liminf_{n \to \infty} \left| \frac{1}{\lfloor cn + n^{1/4} \rfloor} \sum_{k=1}^{\lfloor cn + n^{1/4} \rfloor} F_{n,k}\left( \frac{1}{2} \right) - \frac{1}{2} \right| \\ &= |\mu - \frac{1}{2}| \\ &> 0.
\end{align*}
Thus \textbf{Step 1} follows if we show, 
\[
F_{2n,2n}\left( \frac{1}{2} \right) \xrightarrow[n \to \infty]{a.s.} \mu \quad \text{for some } \mu \neq \frac{1}{2}.
\]
To show that, observe for $n$ even:
\begin{align*}
    F_{n,n}\left( \frac{1}{2} \right) 
    &= \frac{ \sum\limits_{j = \frac{n}{2} + 1}^{n} \frac{dQ}{dR}(X_{n,j}) }
    { \sum\limits_{j \in [n]} \frac{dQ}{dR}(X_{n,j}) }.
\end{align*}
Now by Claim 1 of \Cref{Th:th3}, taking $\delta = \frac{1}{2}$, we have 
$\exists \, \tilde{\mu} > 0, \tilde{\mu} \neq 1$ such that:
\begin{align*}
\frac{\frac{1}{\delta n} \sum_{j \leq \delta n} \frac{dQ}{dR}(X_{n,(j)})}{\frac{1}{n} \sum_{j \in [n]} \frac{dQ}{dR}(X_{n,j}) }
    &\xrightarrow{\text{a.s.}} \tilde{\mu} \quad \text{as } n \to \infty.
\end{align*}
So,
\begin{align*}
    \frac{ \sum\limits_{j \leq \delta n} \frac{dQ}{dR}(X_{n,(j)}) }
    { \sum\limits_{j \in [n]} \frac{dQ}{dR}(X_{n,j}) }
    &\xrightarrow{\text{a.s.}} \frac{\tilde{\mu}}{2}.
\end{align*}
Thus with $\mu = 1 - \frac{\Tilde{\mu}}{2}$, $\mu \neq \frac{1}{2}$ we have 
\[  \frac{ \sum\limits_{j = \frac{n}{2} + 1}^{n} \frac{dQ}{dR}(X_{n,(j)}) }{ \sum\limits_{j \in [n]} \frac{dQ}{dR}(X_{n,j}) }
    \xrightarrow{\text{a.s.}} \mu, \]
showing that, 
\[ F_{2n,2n}\bigg(\frac{1}{2}\bigg) \xrightarrow{\text{a.s.}} \mu \quad \text{as} \quad n \to \infty. \]
This proves \textbf{Step 1}. 

\textbf{Step 2:} We will now show,

\[
\left\| \widehat{F}_{n, \lfloor cn + n^{1/4} \rfloor}(t) - \frac{1}{\lfloor cn + n^{1/4} \rfloor} \sum_{k=1}^{\lfloor cn + n^{1/4} \rfloor} \P_{\mathcal{G}_{n,k}}(p_{n,k} \leq t) \right\|_{\infty} = o(1) \quad \text{a.s.}
\]
\textbf{Proof of Step 2:} 
By Theorem 11.2 of \cite{vovk2005algorithmic}, 
\[
\P_{\mathcal{G}_{n,k}}(p_{n,k} \leq t) = t, \quad \forall t \in [0,1] \text{ for } k \leq cn,
\]
and $p_{n,1}, \ldots, p_{n,\lfloor cn \rfloor} \overset{iid}{\sim} U(0,1)$. So with $F$ the CDF of $U(0,1)$,

\begin{align*}
&\quad \left\| \widehat{F}_{n, \lfloor cn + n^{1/4} \rfloor}(t) - \frac{1}{\lfloor cn + n^{1/4} \rfloor} \sum_{k=1}^{\lfloor cn + n^{1/4} \rfloor} \P_{\mathcal{G}_{n,k}}(p_{n,k} \leq t) \right\|_{\infty} \\
&\leq \left\| \frac{1}{\lfloor cn + n^{1/4} \rfloor} \left( \sum_{k=1}^{\lfloor cn \rfloor} \mathbf{1}(p_{n,k} \leq t) - \lfloor cn \rfloor F(t) \right) \right\|_{\infty} \\
&\quad + \left\| \frac{1}{\lfloor cn + n^{1/4} \rfloor} \sum_{k=\lfloor cn \rfloor + 1}^{\lfloor cn + n^{1/4} \rfloor} \bigg( \mathbf{1}(p_{n,k} \leq t) - \P_{\mathcal{G}_{n,k}}(p_{n,k} \leq t) \bigg) \right\|_{\infty} \\
&\leq \frac{\lfloor cn \rfloor}{\lfloor cn + n^{1/4} \rfloor} \left\| \frac{1}{\lfloor cn \rfloor} \sum_{k=1}^{\lfloor cn \rfloor} \mathbf{1}(p_{n,k} \leq t) - F(t) \right\|_{\infty}
+ \frac{2n^{1/4}}{\lfloor cn + n^{1/4} \rfloor} \\
&= o(1) \quad \text{a.s.}
\end{align*}
Since by Glivenko–Cantelli lemma, as $p_{n,1}, \ldots, p_{n,\lfloor cn \rfloor} \overset{iid}{\sim} U(0,1)$, so
\[
\left\| \frac{1}{\lfloor cn \rfloor} \sum_{k=1}^{\lfloor cn \rfloor} \mathbf{1}(p_{n,k} \leq t) - F(t) \right\|_{\infty} = o(1) \quad \text{a.s.}
\]
Hence, \textbf{step 2} follows, and hence the theorem also follows.
\qed
\\
\medskip
\\
\textbf{Proof of \Cref{th: rel len CONCH}:} For $\alpha \in (0,1)$ let $C_{n,1 - \alpha}^\texttt{MCP}$ be the confidence set produced by the \texttt{MCP} algorithm applied on $\{X_{n,j}\}_{1 \leqslant j \leqslant n}$. By construction, 
   \[ C_{n,1 - \alpha}^\texttt{MCP} = \{j \in [n] : p^\texttt{CONF}_{n,j} > \alpha \}. \]
   Now following the notations as in Algorithm~\ref{alg: CONCH traiangular}, by Donsker's theorem (\cite{donsker1951invariance})
   \begin{equation}
   \label{eq: Donsker}
       F_n(t) \to \P(\sup_{x \in [0,1]}|\phi_x| \leqslant t), \quad t \geq 0,
   \end{equation}
   where $(\phi_x)_{x \geqslant 0}$ is the Brownian Bridge. 

For the first part, observe that by \Cref{th: Main theorem} for any $\Tilde{c} > 0$ , 
   \begin{equation}
       \begin{split}
           \liminf_{n \to \infty} \hspace{0.2cm} \text{KS}(\widehat{F}^{(0)}_{n, \lfloor cn + \Tilde{c}n^{1/4} \rfloor },u) > 0 \quad \text{a.s}, \\
           \liminf_{n \to \infty} \hspace{0.2cm} \text{KS}(\widehat{F}^{(1)}_{n, \lfloor cn - \Tilde{c}n^{1/4} \rfloor },u) > 0 \quad \text{a.s}.
       \end{split}
   \end{equation}
   Thus $W^{(0)}_{n, \lfloor cn + \Tilde{c}n^{1/4} \rfloor} \convAS \infty$ and $W^{(1)}_{n, \lfloor cn - \Tilde{c}n^{1/4} \rfloor} \convAS \infty$. Therefore by \Cref{eq: Donsker}, $p^\text{left}_{n,\lfloor cn + \Tilde{c}n^{1/4} \rfloor} \convAS 0$ and $p^\text{right}_{n,\lfloor cn - \Tilde{c}n^{1/4} \rfloor} \convAS 0$ as $n \to \infty$. Since by construction, $p^\texttt{CONF}_{n,t} = \min\{2P^\text{left}_{n,t},2p^\text{right}_{n,t},1\}$, therefore $p^\texttt{CONF}_{n,\lfloor cn + \Tilde{c}n^{1/4} \rfloor} \convAS 0$ and  $p^\texttt{CONF}_{n,\lfloor cn - \Tilde{c}n^{1/4} \rfloor} \convAS 0$ as $n \to \infty$. Let $A_n$ be the event 
   \begin{equation}
       A_{n,\Tilde{c}} := \big[p^\texttt{CONF}_{n,\lfloor cn - \Tilde{c}n^{1/4} \rfloor} \leqslant \alpha, \quad p^\texttt{CONF}_{n,\lfloor cn + \Tilde{c}n^{1/4} \rfloor} \leqslant \alpha\big].
   \end{equation}
 Then $\mathbf{1}(A_{n,\Tilde{c}}) \convAS 1$ as $n \to \infty$ and this holds for any $\Tilde{c} > 0$ and hence $\frac{l_{n,1-\alpha}}{n} \convAS 1$ as $n \to \infty$. Thus, part 1 of the theorem follows.

 For the second part, observe that $\widehat{\tau}_n \in C^\texttt{MCP}_{n,1 - \alpha}$ and thus $\mathbf{1}(\widehat{\tau}_n \in [cn - n^{1/4}, cn + n^{1/4}]) \convAS 1$ as $n \to \infty$. Thus $\frac{\widehat{\tau}_n}{\tau_n} \overset{\text{a.s.}}{=} 1 + o(1)$ and thus the theorem follows. \qed
\\
\medskip
\\
\textbf{Proof of \Cref{Th: optimising CONCH}:}
Observe that, using Theorem 5,
\[
\mathbf{1}\!\left\{\widehat{\tau}_n \in \big[\tau_n-n^{1/4},\,\tau_n+n^{1/4}\big]\right\}
\xrightarrow[n\to\infty]{a.s.} 1 .
\]

Let $f$ be a density and let $\widehat f_{\widehat{\tau}_n}$ denote the kernel density estimator of $f$
based on $\{X_{n,1},\ldots,X_{n,\widehat{\tau}_n}\}$, defined by
\[
\widehat f_{\widehat{\tau}_n}(x)
= \frac{1}{\widehat{\tau}_n h_{\widehat{\tau}_n}}\sum_{i=1}^{\widehat{\tau}_n}
K\!\left(\frac{x-X_{n,i}}{h_{\widehat{\tau}_n}}\right),
\qquad
\widehat f_{\tau_n}(x)
= \frac{1}{\tau_n h_{\tau_n}}\sum_{i=1}^{\tau_n}
K\!\left(\frac{x-X_{n,i}}{h_{\tau_n}}\right).
\]
Then
\[
\bigl\|\widehat f_{\widehat{\tau}_n}-f\bigr\|_\infty
\le
\bigl\|\widehat f_{\widehat{\tau}_n}-\widehat f_{\tau_n}\bigr\|_\infty
+\bigl\|\widehat f_{\tau_n}-f\bigr\|_\infty .
\]

By \Cref{assn: kernel} and Theorem~2 of \cite{wied2012consistency},
\[
\bigl\|\widehat f_{\tau_n}-f\bigr\|_\infty=o(1)\quad\text{a.s.}.
\]

For the other term, write
\begin{align*}
\bigl\|\widehat f_{\widehat{\tau}_n}-\widehat f_{\tau_n}\bigr\|_\infty
&\le
\left\|
\frac{1}{\widehat{\tau}_n h_{\widehat{\tau}_n}}
\sum_{i=1}^{\widehat{\tau}_n} K\!\left(\frac{x-X_{n,i}}{h_{\widehat{\tau}_n}}\right)
-
\frac{1}{\tau_n h_{\widehat{\tau}_n}}
\sum_{i=1}^{\tau_n} K\!\left(\frac{x-X_{n,i}}{h_{\widehat{\tau}_n}}\right)
\right\|_\infty\\
&\quad+
\left\|
\frac{1}{\tau_n h_{\widehat{\tau}_n}}
\sum_{i=1}^{\tau_n} K\!\left(\frac{x-X_{n,i}}{h_{\widehat{\tau}_n}}\right)
-
\frac{1}{\tau_n h_{\tau_n}}
\sum_{i=1}^{\tau_n} K\!\left(\frac{x-X_{n,i}}{h_{\tau_n}}\right)
\right\|_\infty\\
&\le
\frac{1}{\widehat{\tau}_n h_{\widehat{\tau}_n}}
\sum_{i=\widehat{\tau}_n\wedge\tau_n}^{\widehat{\tau}_n\vee\tau_n}
\Bigl|K\!\left(\frac{x-X_{n,i}}{h_{\widehat{\tau}_n}}\right)\Bigr|
\;+\;
\tau_n\left|
\frac{1}{\widehat{\tau}_n h_{\widehat{\tau}_n}}
-\frac{1}{\tau_n h_{\tau_n}}
\right|.
\end{align*}
Let $k=\sup_{x\in\mathbb{R}}|K(x)|<\infty$ (since $K:\mathbb{R}\to\mathbb{R}$ is continuous and
$\lim_{|x|\to\infty}K(x)=0$). Then
\[
\bigl\|\widehat f_{\widehat{\tau}_n}-\widehat f_{\tau_n}\bigr\|_\infty
\le
k\,\frac{|\widehat{\tau}_n-\tau_n|}{\widehat{\tau}_n h_{\widehat{\tau}_n}}
+
k\,\tau_n\left|
\frac{1}{\widehat{\tau}_n h_{\widehat{\tau}_n}}
-\frac{1}{\tau_n h_{\tau_n}}
\right|
=
\Theta\!\left(n^{-(3/4+\alpha)}\right)
=o(1)\quad\text{a.s.,}
\]
where $\alpha\in(-1/2,0)$.
Hence,
\[
\bigl\|\widehat f_{\widehat{\tau}_n}-f\bigr\|_\infty=o(1).
\]

Applying this to $q(x)$ computed on
$\{X_{n,1},\ldots,X_{n,\widehat{\tau}_n}\}$ and to $r(x)$ computed on
$\{X_{n,\widehat{\tau}_n+1},\ldots,X_{n,n}\}$, we obtain for fixed $x,y\in\mathbb{R}$,
\[
\mathbf{1}\!\left\{\widehat s_n^*(x) > \widehat s_n^*(y)\right\}
\convAS\mathbf{1}\!\left\{s^*(x)<s^*(y)\right\},
\qquad
\mathbf{1}\!\left\{\widehat s_n^*(x)=\widehat s_n^*(y)\right\}
\convAS\mathbf{1}\!\left\{s^*(x)=s^*(y)\right\}.
\]

Let $X_1\sim R$ and $Y_1\sim Q$ be independent. Then
\begin{align*}
\E\!\left[P_n[s]\right]
&=
\P\!\left(s(X_1)>s(Y_1)\right)
+\frac12\,\P\!\left(s(X_1)=s(Y_1)\right).
\end{align*}
Hence, by the dominated convergence theorem,
\begin{align*}
\P\!\left(\widehat s_n^*(X_1)>\widehat s_n^*(Y_1)\right)
&\to
\P\!\left(s^*(X_1)>s^*(Y_1)\right),\\
\P\!\left(\widehat s_n^*(X_1)=\widehat s_n^*(Y_1)\right)
&\to
\P\!\left(s^*(X_1)=s^*(Y_1)\right).
\end{align*}
Thus,
\[
\E\!\left[P_n[\widehat s_n^*]\right]-\E\!\left[P_n[s^*]\right]=o(1),
\qquad n\to\infty .
\]
and the theorem follows.\qed

\end{document}